
\documentclass[11pt]{article}

\setlength{\textwidth}{6.25in} \setlength{\oddsidemargin}{0in}
\setlength{\textheight}{20 cm}
\usepackage{amsmath}
\bibliographystyle{abbrv}
\usepackage{verbatim}
\usepackage{amsfonts}
\usepackage{amsmath}
\usepackage{authblk}
\usepackage{amsthm}
\usepackage{amssymb}
\usepackage{graphicx,
}


\usepackage{listings}
\lstset{language=TeX,basicstyle={\itshape}}

\usepackage[bookmarksopenlevel=2,bookmarks=true]{hyperref} 




\newcommand{\E}{\mathbb{ E}}
\newcommand{\EE}{\mathbb{ E}}
\newcommand{\PP}{\mathbb{P}}

\newcommand{\R}{\mathbb{R}}
\newcommand{\C}{\mathbb{C}}

\newcommand{\HH}{\mathbb{H}}
\newcommand{\N}{\mathbb{N}}

\newcommand{\pa}{\partial}

\newcommand{\F}{{\mathcal F}}

\def\lr{\leftrightarrow}

\def\eps{\varepsilon}
\def\til{\widetilde}
\def\ha{\widehat}
\def\sem{\setminus}
\def\lin{\overline}
\def\ulin{\underline}

\def\st{\stackrel}

\DeclareMathOperator{\ee}{e} \DeclareMathOperator{\ii}{i}

\DeclareMathOperator{\rad}{rad}

\DeclareMathOperator{\sign}{sign} \DeclareMathOperator{\diam}{diam}
\DeclareMathOperator{\dist}{dist} 
\DeclareMathOperator{\hcap}{hcap} \DeclareMathOperator{\id}{id}
\DeclareMathOperator{\Imm}{Im } \DeclareMathOperator{\Ree}{Re }

\DeclareMathOperator{\doub}{doub}

\theoremstyle{plain}
\newtheorem{Theorem}{Theorem}[section]
\newtheorem{Lemma}[Theorem]{Lemma}
\newtheorem{Corollary}[Theorem]{Corollary}
\newtheorem{Proposition}[Theorem]{Proposition}
\theoremstyle{definition}
\newtheorem{Definition}[Theorem]{Definition}
\newtheorem{Remark}[Theorem]{Remark}
\newtheorem{Example}[Theorem]{Example}
\numberwithin{equation}{section}
\newcommand{\BGE}{\begin{equation}}
\newcommand{\BGEN}{\begin{equation*}}
\newcommand{\EDE}{\end{equation}}
\newcommand{\EDEN}{\end{equation*}}

\begin{document}

\title{Existence of multi-point boundary Green's function for chordal Schramm-Loewner evolution (SLE)}
\author{Rami Fakhry and Dapeng Zhan}
\affil{Michigan State University}
\date{\today}
\maketitle

\begin{abstract}
In the paper we prove that, for $\kappa\in(0,8)$, the $n$-point boundary Green's function of exponent $\frac8\kappa -1$ for chordal SLE$_\kappa$ exists. We also prove that the convergence is uniform over compact sets and the Green's function is continuous. We also give up-to-constant bounds for the Green's function.
\end{abstract}
\tableofcontents

\section{Introduction}\label{chapter1}
The Schramm-Loewner evolution (SLE for short)  is a one-parameter ($\kappa\in(0,\infty)$) family of random fractal curves which grow in plane domains. It was defined in the seminal work of Schramm \cite{S-SLE} in 1999. Because of its close relation with two-dimensional lattice models, Gaussian free field, and Liouville quantum gravity, SLE has attracted a lot of attention for over two decades.

The geometric properties of an SLE$_\kappa$ curve depends on the parameter $\kappa$. When $\kappa\ge 8$, an SLE$_\kappa$ curve visits every point in the domain (cf.\ \cite{RS}); when $\kappa\in(0,8)$, an SLE$_\kappa$ curve has Hausdorff dimension $d_0=1+\frac\kappa 8$ (cf.\ \cite{Bf}).
There are several types of SLE. In this paper we focus on chordal SLE, which grows in a simply connected plane domain from one boundary point to another boundary point.
Suppose $\gamma$ is a chordal SLE$_\kappa$ curve, $\kappa\in(0,8)$, in a domain $D$, and $z_0\in D$. The Green's function for $\gamma$ at $z_0$ is the limit
\BGE G(z_0):=\lim_{r\to 0^+} r^{-\alpha} \PP[\dist(z_0,\gamma)\le r]\label{1-pt Green}\EDE
for some suitable exponent $\alpha>0$ depending on $\kappa$ such that the limit exists and is not trivial, i.e., lies in $(0,\infty)$. This notion easily extends to $n$-point Green's function:
\BGE G(z_1,\dots,z_n):=\lim_{r_1,\dots,r_n\to 0^+} \prod_{j=1}^n r_j^{-\alpha} \PP[\dist(z_j,\gamma)\le r_j,1\le j\le n],\label{n-pt Green}\EDE
where $z_1,\dots,z_n$ are distinct points in $D$, provided that the limit exists and is not trivial.

The term ``Green's function'' is used for the following reasons. Recall the Laplacian Green's function $G_D(z,w)$, $z\ne w\in D$, for a planar domain $D$, which is characterized by the following properties: for any $w\in D$,
\begin{itemize}
  \item $G_D(\cdot,w)$ is positive and harmonic on $D\sem \{w\}$.
  \item As $z\to\pa D$, $G_D(z,w)\to 0$.
  \item As $z\to w$, $G_D(z,w)=-\frac 1{2\pi} \ln|z-w|+O(1)$.
\end{itemize}

One important fact is
\BGE  G(z,w)=  \lim_{r\to 0^+} \frac{-\ln r}{2\pi}\cdot  \PP^w[\dist(z,B[0,\tau_D])\le r],\label{Laplacian Green}\EDE
where $B$ is a planar Brownian motion started from $w$, and $\tau_D$ is the exit time of $D$. Notice the similarity between (\ref{1-pt Green}) and (\ref{Laplacian Green}). The main difference between them is the normalization factor: one is $r^{-\alpha}$ and the other is    $\frac{-\ln r}{2\pi}$.

 Another important fact is: for any measurable set $U\subset D$,
\BGE \EE^w[|\{t\in[0,\tau_D): B_t\in U\}|]=\int_U G(z,w) dA(z).\label{integral-Laplacian}\EDE
Here $|\cdot |$ stands for the Lebesgue measure on $\R$, and $A$ is the area.

It turns out that the correct exponent $\alpha$ is  the co-dimension of the SLE curve, i.e., $\alpha=2-d_0=2-(1+\frac\kappa 8)=1-\frac \kappa 8$. 
The existence of a variation of Green's function for chordal SLE$_\kappa$, $\kappa\in(0,8)$, was given in \cite{Law4}, where  the conformal radius was used instead of Euclidean distance. The existence of $2$-point Green's function was proved in \cite{LW} (again for conformal radius instead of Euclidean distance) following a method initiated by Beffara \cite{Bf}. In \cite{LR} the authors showed that Green's function as defined in (\ref{n-pt Green}) (using Euclidean distance) exists for $n = 1,2$,
and then used those Green's functions to prove that
\begin{itemize}
  \item an SLE$_\kappa$ curve $\gamma$ can be parametrized by its $d_0$-dimensional Minkowski content, i.e., for any $t_1<t_2$, the $(1+\frac\kappa 8)$-dimensional Minkowski content of $\gamma[t_1,t_2]$ is $t_2-t_1$;
  and
  \item under such parametrization, for any measurable set $U\subset D$,
\BGE \EE[|\{t: \gamma(t)\in U\}|]=\int_U G(z) dA(z).\label{integral-NP}\EDE
\end{itemize}
The Minkowski content parametrization agrees with the natural parametrization introduced earlier (cf.\ \cite{LS,LZ})
The similarity between (\ref{integral-Laplacian}) and (\ref{integral-NP}) further justifies the terminology  ``Green's function''.

In a series of papers (\cite{higher,existence}) the authors showed that the Green's function of chordal SLE exists for any $n\in\N$.  In addition, they found convergence rate and
modulus of continuity of the Green's functions, and provided up-to-constant sharp bounds for them.

If the reference point(s) is (are) on the boundary of the domain instead of the interior, we may use (\ref{1-pt Green}) and (\ref{n-pt Green}) to define the one-point and $n$-point boundary Green's function. Again, if $\kappa\ge 8$, the boundary Green's function makes no sense for SLE$_\kappa$ since it visits every point on the boundary; if $\kappa\in (0,8)$, the intersection of the SLE$_\kappa$ curve with the boundary  has Hausdorff dimension $d_1=2-\frac 8\kappa$ (\cite{dim-real}), and so the reasonable choice of the exponent $\alpha$ is $\alpha=1-d_1=\frac 8\kappa-1$.

Greg Lawler proved (cf.\ \cite{Mink-real}) the existence of the $1$- and $2$-point (on the same side) boundary Green's function for chordal SLE, and  used them to prove that the $d_1$-dimensional Minkowski content of the intersection of SLE$_\kappa$ with the domain boundary exists. He also obtained the exact formulas of these Green's function up to some multiplicative constant. We will use the exact formula of the one-point Green's function: 
\BGE G(z)=\ha c |z|^{-\alpha},\quad z\ne0,\label{G(z)}\EDE
where $\ha c>0$ is some (unknown) constant depending only on $\kappa$. We will also use the convergence rate of the one-point Green's function (\cite[Theorem 1]{Mink-real}): there is some constant $C,\beta_1>0$ depending only on $\kappa$ such that for any $z\in\R\sem\{0\}$ and $r\in (0,|z|)$,
\BGE |\PP[\dist(z,\gamma)\le \eps]-G(z)\eps^\alpha|\le C (\eps/|z|)^{\alpha+\beta_1}.\label{G(z)-approx}
\EDE


To the best of our knowledge, the existence of $n$-point boundary Green's
function for $n > 2$ and the $2$-point boundary Green's function when the two reference points lie on different sides of $0$ has not been proved so far. The main goal of the paper is to prove this existence for all $n\in\N$ without assuming that the reference points all lie on the same side of $0$. In addition we prove that the Green's functions are continuous. We do not have exact formulas of these functions, but find some sharp bounds for them in terms of simple functions.

We will mainly follow the approach in \cite{existence}, and apply the results from there as well as from \cite{Mink-real} and \cite{higher}. Below is our main result.

\begin{Theorem}
  Let $\kappa\in(0,8)$ and $\alpha=\frac 8\kappa -1$. Let $\gamma$ be an SLE$_\kappa$ curve in $\HH:=\{z\in\C:\Imm z>0\}$ from $0$ to $\infty$. Let $n\in\N$ and $\Sigma_n=\{(z_1,\dots,z_n)\in(\R\sem \{0\})^n: z_j\ne z_k\mbox{ whenever}j\ne k\}$. Then for any $\ulin z=(z_1,\dots,z_n)\in \Sigma_n$, the limit $G(\ulin z)$ in (\ref{n-pt Green})
  exists and lies in $(0,\infty)$. Moreover, the convergence in (\ref{n-pt Green}) is uniform on each compact subset of $\Sigma_n$,   the function $G$ is continuous on $\Sigma_n$, and there is an explicit function $F$ on $\Sigma_n$ (defined in (\ref{F})) with a simple form such that $G(\ulin z)\asymp F(\ulin z)$, where the implicit constants depend only on $\kappa$ and $n$. 
  \label{Main-thm}
\end{Theorem}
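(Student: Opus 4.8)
The plan is to follow the strategy of \cite{existence}, replacing the interior estimates used there by their boundary analogues from \cite{Mink-real} and \cite{higher}. The starting point is to set up a uniform framework: for $\ulin z=(z_1,\dots,z_n)\in\Sigma_n$ and radii $\ulin\eps=(\eps_1,\dots,\eps_n)$ with each $\eps_j$ small compared to the relevant separation scales, write
\BGE P(\ulin z,\ulin\eps):=\PP[\dist(z_j,\gamma)\le \eps_j,\ 1\le j\le n],\EDE
and seek to show that $\prod_j \eps_j^{-\alpha} P(\ulin z,\ulin\eps)$ is Cauchy as $\ulin\eps\to 0$. The natural way to do this is to decompose according to the order in which $\gamma$ gets within distance $\eps_j$ of the points $z_j$; by symmetry it suffices to analyze the event $E$ that $\gamma$ approaches $z_1$ first, then $z_2$, and so on, up to a final reshuffling, and then one uses the domain Markov property of SLE at the successive hitting times $\tau_1<\tau_2<\cdots<\tau_n$. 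At time $\tau_j$ the curve is near $z_j$, the remaining curve is (conformally) an SLE$_\kappa$ in the slit half-plane from the tip to $\infty$, and the conditional probability of the remaining constraints is governed by the $(n-j)$-point boundary Green's function together with conformal distortion estimates. This gives a recursive/inductive scheme on $n$.

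The key technical inputs I would assemble are: (i) the one-point boundary Green's function formula (\ref{G(z)}) and its convergence rate (\ref{G(z)-approx}); (ii) a one-point ``up-to-constant'' upper bound of the form $\PP[\dist(z,\gamma)\le\eps]\asymp (\eps/|z|)^\alpha$ uniformly, together with a boundary-exponent estimate controlling how this behaves when $z$ is close to $0$ or close to $\infty$ — these come from \cite{Mink-real} and, for the conformal-radius or multi-scale versions, from \cite{higher}; (iii) a quasi-independence / separation lemma: if the $z_j$ are well separated on the scale of the $\eps_j$, then the events $\{\dist(z_j,\gamma)\le\eps_j\}$ decouple up to bounded multiplicative error, proved by conditioning and using the strong Markov property plus conformal maps. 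With (i)--(iii) in hand, the inductive step is: condition on the first approach, apply the rate estimate (\ref{G(z)-approx}) to extract the factor $G(z_{j_1})\eps_{j_1}^\alpha$ with error $(\eps_{j_1}/\text{scale})^{\alpha+\beta_1}$, then map down by the conformal map removing $\gamma[0,\tau_{j_1}]$, observe that the images $\tilde z_k$ and images $\tilde\eps_k$ of the remaining data again lie in a compact-after-rescaling region of $\Sigma_{n-1}$ (this uses Koebe-type distortion bounds and the fact that the separation is preserved up to constants), and apply the induction hypothesis to get a limit for the remaining $(n-1)$-point probability with its own power-rate error. Summing the geometric series of errors over scales gives the Cauchy property, hence existence of $G(\ulin z)$; the uniformity of all estimates over a compact $K\subset\Sigma_n$ gives the uniform convergence, and continuity of $G$ then follows because a uniform limit of the continuous functions $\ulin z\mapsto \prod_j\eps_j^{-\alpha}P(\ulin z,\ulin\eps)$ is continuous (continuity in $\ulin z$ for fixed $\ulin\eps$ being elementary from continuity of the SLE curve). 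The two-sided bound $G(\ulin z)\asymp F(\ulin z)$ is obtained by tracking the up-to-constant bounds through the same recursion: each approach contributes a factor $\asymp (\text{dist to }z_j/\text{local scale})^{\alpha}$, and multiplying these out and optimizing over the order of approach yields the explicit $F$ in (\ref{F}).

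The main obstacle I anticipate is \emph{controlling the geometry near the two-sided and degenerate configurations}: when some $z_j$ are on opposite sides of $0$, or when a point is close to $0$ (the SLE source), close to $\infty$ (the target), or two points are close to each other, the conformal maps removing initial segments of $\gamma$ can distort distances badly, and the naive product of one-point Green's functions blows up or vanishes at the wrong rate. The resolution is a careful multi-scale/quasi-multiplicativity argument: one must show that conditioning on $\gamma$ having reached a given intermediate configuration, the remaining data, after the conformal map, still lies in a region where the induction hypothesis applies with uniform constants, and that the ``cost'' of forcing $\gamma$ through a tight passage is captured by a boundary one-arm exponent. This is precisely the part where the boundary case genuinely differs from the interior case of \cite{existence}: the relevant force-point exponents and the behavior of the boundary Green's function under the conformal maps $g_t-W_t$ need the sharper inputs from \cite{Mink-real} and \cite{higher}, and getting the error exponents to be summable uniformly — rather than merely pointwise — is the crux of the proof. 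A secondary difficulty is a compactness/covering argument to upgrade pointwise Cauchy estimates to uniform ones on $K$, which requires that the implicit separation scales be bounded below and above on $K$; this is routine but must be stated carefully to make the uniform-convergence and continuity claims rigorous.
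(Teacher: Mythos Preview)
Your high-level strategy---induction on $n$, decomposition by which point $\gamma$ approaches first, then DMP plus the $(n-1)$-point induction hypothesis on the conformal image---is exactly the paper's. The up-to-constant bound $G\asymp F$ is indeed immediate from Proposition~\ref{lower-upper} once existence is known, as you say.

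There is, however, a real gap in the step you describe as ``observe that the images $\tilde z_k$ \dots\ again lie in a compact-after-rescaling region of $\Sigma_{n-1}$ (this uses Koebe-type distortion bounds and the fact that the separation is preserved up to constants)''. The separation is \emph{not} preserved up to constants: nothing prevents $\gamma[0,\tau^{z_1}_{r_1}]$ from coming extremely close to some other $z_k$, or from escaping to a huge scale, before reaching the disk around $z_1$. When that happens $Z_{\tau}(z_k)$ is tiny (or the derivatives blow up), the image configuration leaves every compact set of $\Sigma_{n-1}$, and the induction hypothesis gives you nothing uniform. The paper spends all of Section~\ref{mainestimates} on exactly this: Lemmas~\ref{Lemma-PR}, \ref{inner-last}, \ref{ping-pong}, \ref{ping-pong-I} show that such bad events cost an \emph{extra} positive power beyond $\prod r_j^\alpha$, and Lemma~\ref{compact-lem} packages them into the statement that, for any $\eps$, there is a fixed $\HH$-hull $H$ such that $\PP[K_{\tau^{z_1}_{r_1}}\not\subset H;\,\text{all points hit}]<\eps\prod r_j^\alpha$. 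On the good event $\{K_{\tau^{z_1}_{r_1}}\subset H\}$, compactness of $\mathcal H(H)$ (Proposition~\ref{compact-prop}) then forces the images into a genuine compact subset of $\Sigma_{n-1}$. This is the substantive new work, and the extremal-length arguments there (especially the ping-pong Lemma~\ref{inner-last}) are not just a repackaging of the one-arm exponent.

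The second missing ingredient is how to actually pass to the limit $r_1\to 0$. You write ``apply the rate estimate (\ref{G(z)-approx}) to extract the factor $G(z_{j_1})\eps_{j_1}^\alpha$'', but (\ref{G(z)-approx}) controls only the probability $\PP[\tau^{z_1}_{r_1}<\infty]$, not the conditional expectation $\EE[\,\cdot\,|\tau^{z_1}_{r_1}<\infty]$ that carries the $(n-1)$-point data. The paper handles this by introducing the two-sided chordal SLE law $\PP^*_{z_1}$ (the $r_1\to 0$ limit of $\PP^{r_1}_{z_1}$), proving the Radon--Nikodym comparison Propositions~\ref{RN<1} and~\ref{Prop2.13}, and running a chain of thirteen approximation steps with intermediate scales $r_1\ll\eta_1\ll\eta_2$ to swap $\PP^{r_1}_{z_1}$ for $\PP^*_{z_1}$ and $\tau^{z_1}_{r_1}$ for $T_{z_1}$. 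The limit is then identified explicitly as $G^{1}(\ulin z)=G(z_1)\,\EE^*_{z_1}[G_{T_{z_1}}(z_2,\dots,z_n)]$ (formula~(\ref{induction})). Your sketch does not name this mechanism, and without it the Cauchy argument does not close.
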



Our result will shed light on the study of multiple SLE. For example, if we condition the chordal SLE$_\kappa$ in Theorem \ref{Main-thm} to pass through small discs centered at $z_1<z_2<\cdots<z_n\in (0,\infty)$, and suitably take limits while sending the radii of the discs to zero, then we should get an $(n+1)$-SLE$_\kappa$ configuration in $\HH$ with link pattern $(0\lr z_1;z_1\lr z_2;z_2\lr z_3;\dots;z_{n-1}\lr z_n;z_n\lr \infty)$, which is a collection of $(n+1)$ random curves $(\gamma_0,\dots,\gamma_n)$ in $\lin\HH$ such that $\gamma_j$ connects $z_j$ with $z_{j+1}$, where $z_0:=0$ and $z_{n+1}:=\infty$, and when any $n$ curves among the $(n+1)$ curves are given, the last curve is a chordal SLE$_\kappa$ curve in a connected component of the complement of the given $n$ curves in $\HH$.  The $n$-point boundary Green's function is then closely related to the partition function associated to such multiple SLE.

Here are a few topics that we could study in the near future. We may consider ``mixed'' multi-point Green's functions for chordal SLE, where some reference points lie in the interior of the domain, and some lie on the boundary. We expect that the Green's functions still exist, if  $1-\frac\kappa 8$ is used as the exponent for interior points and   $\frac 8\kappa-1$ is used as the exponent for boundary points. We may also work on other types of SLE such as radial SLE, which grows from a boundary point to an interior point.  The multi-point (interior) Green's function for radial SLE was proved to exist in \cite{MZ}. The next natural objects to study are the boundary and mixed multi-point Green's function for radial SLE.

The rest of the paper is organized in a straightforward fashion. In Section \ref{Chap-Prel}, we recall symbols, notation and some basic results that are relevant to the paper. Section   \ref{mainestimates} contains the most technical part of the paper, where we derive a number of important estimates. We finish the proof of the main theorem in Section \ref{Chap-main-thm}.

\section{Preliminaries}\label{Chap-Prel}
\subsection{Notion and symbols}
Let $\HH=\{z\in\C:\Imm z>0\}$ be the open upper half plane. Given $z_0\in\C$ and $S\subset \C$, we use $\rad_{z_0}(S)$ to denote $\sup\{|z-z_0|:z\in S\cup\{z_0\}\}$. We write $\N_n$ for $\{k\in\N:k\le n\}$, where $\N=\{1,2,3,\dots\}$ is the set of all positive integers. For $a,b\in\R$, we write $a\wedge b$ and $a\vee b$ respectively for $\min\{a,b\}$ and $\max\{a,b\}$.

We fix $\kappa\in(0,8)$ and set $d=1+\frac\kappa 8$ and $\alpha=\frac 8\kappa -1$. Throughout, a constant (such as $\alpha$) depends only on $\kappa$ and a variable $n\in\N$ (number of points), unless otherwise specified. We use $X\lesssim Y$ or $Y\gtrsim X$ if there is a constant $C>0$ such that $X\le C Y$. We write $X\asymp Y$ if $X\lesssim Y$ and $Y\lesssim X$. 



When a (deterministic or random) curve $\gamma(t)$, $t\ge 0$, is fixed in the context, we let $\tau_S=\inf(\{t\ge 0:\gamma(t)\in S\}\cup\{\infty\})$. We write $\tau^{z_0}_r$ for $\tau_{\{z:|z-z_0\le r\}}$, and $T_{z_0}$ for $\tau^{z_0}_0=\tau_{\{z_0\}}$. So another way to say that $\dist(z_0,\gamma)\le r$ is $\tau^{z_0}_r<\infty$. 
We also write $\tau^\infty_R$ for $\tau_{\{z:|z|\ge R\}}$.

A crosscut in a domain $D$ is an open simple curve in $D$, whose two ends approach to two boundary points of $D$. When $D$ is a simply connected domain, any crosscut $\rho$ of $D$ divides $D$ into two connected components.

\subsection{$\HH$-Hulls}\label{H-hull}
 A relatively closed bounded subset $K$ of $\HH$ is called an $\HH$-hull if $\HH\sem K$ is simply connected. The complement domain $\HH\sem K$ is then called an $\HH$-domain. Given an $\HH$-hull $K$, we use $g_K$ to denote the unique conformal map from $\HH\sem K$ onto $\HH$ that satisfies $g_K(z)=z+O(|z|^{-1})$ as $z\to \infty$. Let $f_K=g_K^{-1}$. The half-plane capacity of $K$ is $\hcap(K):=\lim_{z\to\infty} z(g_K(z)-z)$. If $K=\emptyset$, then $g_K=f_K=\id$, and $\hcap(K)=0$. Now suppose $K\ne\emptyset$. Let $a_K=\min(\lin K\cap\R)$ and $b_K=\max(\lin K\cap\R)$. Let $K^{\doub}=K\cup[a_K,b_K]\cup\{\lin z: z\in K\}$. By Schwarz reflection principle, $g_K$ extends to a conformal map from $\C\sem K^{\doub}$ onto $\C\sem[c_K,d_K]$ for some $c_K<d_K\in\R$, and satisfies $g_K(\lin z)=\lin{g_K(z)}$. In this paper, we write $S_K$ for $[c_K,d_K]$. In the case $K=\emptyset$, we understand $S_K$ and $[a_K,b_K]$ as the empty set. Given two $\HH$-hulls $K_1\subset K_2$, we get another $\HH$-hull $K_2/K_1$ defined by $K_2/K_1=g_{K_1}(K_2\sem K_1)$.

\begin{Example}
 For $x_0\in\R$ and $r>0$,  the set $K:=\{z\in\HH:|z-x_0|\le r\}$  is an $\HH$-hull, $a_K=x_0-r$, $b_K=x_0+r$, $g_K(z)=z+\frac{r^2}{z-x_0}$, $\hcap( K)=r^2$, and $S_{K}=[x_0-2r,x_0+2r]$. \label{semi-disc}
\end{Example}

 \begin{Proposition}
 For any $\HH$-hull $K$, $[a_K,b_K]\subset S_K$.
   If $K_1\subset K_2$ are two $\HH$-hulls, then  $S_{K_1}\subset S_{K_2}$ and $S_{K_2/K_1}\subset S_{K_2}$. \label{SKSH}
 \end{Proposition}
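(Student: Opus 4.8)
The plan is to establish the inclusion $[a_K,b_K]\subseteq S_K$ first, and then deduce the two monotonicity statements from it, using the composition identity $g_{K_2}=g_{K_2/K_1}\circ g_{K_1}$ (which holds on all of $\C\sem K_2^\doub$, by uniqueness of the hydrodynamically normalized map plus analytic continuation) together with the following elementary consequence of the Schwarz‑reflection extension: $g_K$ restricts to an increasing homeomorphism from $(b_K,\infty)$ onto $(d_K,\infty)$ and from $(-\infty,a_K)$ onto $(-\infty,c_K)$, with $\lim_{x\downarrow b_K}g_K(x)=d_K$ and $\lim_{x\uparrow a_K}g_K(x)=c_K$.

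For $[a_K,b_K]\subseteq S_K$ it suffices to show $c_K\le a_K$ and $b_K\le d_K$. I would work with $\phi(z):=z-g_K(z)$, which is analytic on $\C\sem K^\doub$, real on $\R\sem[a_K,b_K]$, and tends to $0$ at $\infty$. The function $\Imm\phi$ is harmonic and bounded on $\HH\sem K$ (it equals $\Imm z-\Imm g_K(z)$, a difference that stays bounded although each term is not), it vanishes on $\R\cap\partial(\HH\sem K)$, and it is $\ge0$ on $\partial K$, where $g_K$ is real; hence the minimum principle gives $\Imm\phi\ge0$ on $\HH\sem K$. Since $\phi$ is analytic across the interval $(b_K,\infty)$, real on it, and has nonnegative imaginary part just above it, $\phi'\ge0$ on $(b_K,\infty)$; being nondecreasing and tending to $0$ at $+\infty$, $\phi\le0$ there, i.e. $g_K(x)\ge x$ for $x>b_K$. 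Thus $g_K$ maps $(b_K,\infty)$ into $(b_K,\infty)$; since its image is $(d_K,\infty)$, we get $d_K\ge b_K$. Symmetrically $\phi\ge0$ on $(-\infty,a_K)$ gives $g_K(x)\le x$ there and hence $c_K\le a_K$.

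For the monotonicity, assume $K_1\subseteq K_2$ (the case $K_1=\emptyset$ being trivial) and put $L:=K_2/K_1$. From $\overline{K_1}\subseteq\overline{K_2}$ we have $a_{K_2}\le a_{K_1}$ and $b_{K_1}\le b_{K_2}$, so $\eta:=\lim_{x\uparrow a_{K_2}}g_{K_1}(x)$ and $\xi:=\lim_{x\downarrow b_{K_2}}g_{K_1}(x)$ are well defined, $g_{K_1}$ maps $(-\infty,a_{K_2})$ onto $(-\infty,\eta)$ and $(b_{K_2},\infty)$ onto $(\xi,\infty)$, and $\eta\le c_{K_1}\le d_{K_1}\le\xi$. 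The crux is the claim $\overline L\cap\R\subseteq[\eta,\xi]$: given $w\in\overline L\cap\R$, write $w=\lim_j g_{K_1}(z_j)$ with $z_j\in K_2\sem K_1$ and pass to a subsequential limit $z_\infty\in\overline{K_2}\subseteq K_2\cup[a_{K_2},b_{K_2}]$; if $z_\infty\notin K_1^\doub$ then $g_{K_1}$ is continuous at $z_\infty$ and $w=g_{K_1}(z_\infty)\in\R$ forces $z_\infty\in[a_{K_2},b_{K_2}]\sem[a_{K_1},b_{K_1}]$, so $w\in g_{K_1}([a_{K_2},a_{K_1}))\cup g_{K_1}((b_{K_1},b_{K_2}])\subseteq[\eta,c_{K_1})\cup(d_{K_1},\xi]$; if $z_\infty\in K_1^\doub$ then $z_\infty\in\partial K_1^\doub$ (as each $z_j\in\C\sem K_1^\doub$), and the boundary correspondence for the conformal homeomorphism $g_{K_1}:\C\sem K_1^\doub\to\C\sem S_{K_1}$ forces $w\in\partial(\C\sem S_{K_1})=S_{K_1}=[c_{K_1},d_{K_1}]$; in either case $w\in[\eta,\xi]$. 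This claim gives $g_{K_1}(\C\sem K_2^\doub)\subseteq\C\sem L^\doub$: for $z\in\C\sem K_2^\doub$, injectivity of $g_{K_1}$ together with the inclusion of $K_2\sem K_1$ and its reflection in $K_2^\doub$ excludes $g_{K_1}(z)\in L$ and $g_{K_1}(z)$ in the reflection of $L$, a real $z$ has $g_{K_1}(z)\in(-\infty,\eta)\cup(\xi,\infty)$, disjoint from $[a_L,b_L]\subseteq[\eta,\xi]$, and a non‑real $z$ has $g_{K_1}(z)\notin\R\supseteq[a_L,b_L]$. Then the composition identity yields $g_{K_2}(\C\sem K_2^\doub)=g_L\big(g_{K_1}(\C\sem K_2^\doub)\big)\subseteq g_L(\C\sem L^\doub)=\C\sem S_L$, so $S_{K_2/K_1}=S_L\subseteq S_{K_2}$. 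Lastly, for $S_{K_1}\subseteq S_{K_2}$ it suffices to show $c_{K_2}\le c_{K_1}$ and $d_{K_1}\le d_{K_2}$: for $x<a_{K_2}$ one has $g_{K_1}(x)\in(-\infty,\eta)\subseteq(-\infty,a_L)$, where $g_L(y)\le y$ (the first part applied to $L$) and $g_L(a_L^-)=c_L\le a_L$, so letting $x\uparrow a_{K_2}$ in $g_{K_2}(x)=g_L(g_{K_1}(x))$ gives $c_{K_2}=\lim_{y\uparrow\eta}g_L(y)\le\eta\le c_{K_1}$, and symmetrically $d_{K_1}\le d_{K_2}$.

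The minimum‑principle estimate, the composition identity, and the monotone real‑line pictures of the maps $g_K$ are all routine; I expect the main obstacle to be the claim $\overline{K_2/K_1}\cap\R\subseteq[\eta,\xi]$, since $K_2\sem K_1$ may accumulate on $K_1$ itself, on the segment $[a_{K_1},b_{K_1}]$, or on the part of $[a_{K_2},b_{K_2}]$ outside it, and one has to invoke the correct boundary behaviour of $g_{K_1}$ (in particular the prime‑end/boundary‑correspondence argument when $K_2\sem K_1$ touches $K_1$) in each of these regimes.
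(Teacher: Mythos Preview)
Your argument is correct and self-contained, whereas the paper simply cites \cite[Lemmas 5.2 and 5.3]{LERW} without giving any details. So there is nothing in the paper to compare against at the level of method; you have essentially reconstructed a standard proof.

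Two small remarks on presentation. First, the assertion that $\Imm\phi$ is bounded on $\HH\sem K$ deserves one more word of justification than ``a difference that stays bounded'': one clean way is to apply the maximum principle on $(\HH\sem K)\cap\{|z|<R\}$, using the expansion $\phi(z)=\hcap(K)/z+O(|z|^{-2})$ to see that $\Imm\phi\ge -C/R^{2}$ on $\{|z|=R\}$, and then let $R\to\infty$; alternatively one can first establish $|g_K(z)-z|\le 3\rad_{x_0}(K)$ as in Proposition~\ref{small}, which is logically independent of the present statement. Second, in Case~2 of your claim $\overline L\cap\R\subset[\eta,\xi]$, the phrase ``boundary correspondence \ldots\ forces $w\in S_{K_1}$'' is cleaner if stated via the inverse map: if $w\notin S_{K_1}$ then $f_{K_1}$ is continuous at $w$, whence $z_j=f_{K_1}(g_{K_1}(z_j))\to f_{K_1}(w)\in\C\sem K_1^{\doub}$, contradicting $z_\infty\in K_1^{\doub}$. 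With these tweaks the proof is complete.
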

 \begin{proof}
 This is \cite[Lemmas 5.2 and 5.3]{LERW}.
 \end{proof}


\begin{Proposition}
	If a nonempty $\HH$-hull $K$ satisfies that $\rad_{x_0}(K)\le r$ for some $x_0\in\R$ and $r>0$, then $\hcap(K)\le r^2$, $S_K\subset[x_0-2r,x_0+2r]$, and   \BGE |g_K(z)-z|\le 3r, \quad z\in\C\sem K^{\doub}.\label{f-z0}\EDE
Moreover, for any $z\in\C$ with $|z-x_0|\ge 5r$, we have
	\BGE |g_K(z)-z|\le 2|z-x_0|\Big(\frac{r}{|z-x_0|}\Big)^2 ;\label{1}\EDE
	\BGE |g_K'(z)-1|\le 5\Big(\frac{r}{|z-x_0|}\Big)^2 .\label{3}\EDE
\label{small}
\end{Proposition}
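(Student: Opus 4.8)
The plan is to reduce to $x_0=0$ by the translation covariance $g_{K+x_0}(\cdot)=g_K(\cdot-x_0)+x_0$, and then to compare $K$ with the explicit semidisc $D_r:=\{z\in\HH:|z|\le r\}$ of Example~\ref{semi-disc}, which contains $K$ since $\rad_0(K)\le r$. From this inclusion three facts follow immediately: monotonicity of half-plane capacity gives $0\le\hcap(K)\le\hcap(D_r)=r^2$; Proposition~\ref{SKSH} together with Example~\ref{semi-disc} gives $S_K\subset S_{D_r}=[-2r,2r]$; and $K^{\doub}\subset\{z:|z|\le r\}$, because $K$ and its complex conjugate lie in $\{|z|\le r\}$ (here $x_0=0$ is real) and the endpoints $a_K,b_K$ of the interval $[a_K,b_K]$ lie in $\lin K\cap\R\subset[-r,r]$.

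Next I would prove the crude bound (\ref{f-z0}). Put $\vphi:=g_K-\id$, analytic on $\C\sem K^{\doub}$ with $\vphi(z)\to0$ as $z\to\I$, hence bounded near $\I$; recall that $g_K$ maps $\C\sem K^{\doub}$ conformally (bijectively) onto $\C\sem S_K$. Fix $\zeta\in\pa K^{\doub}$ and let $z\to\zeta$ inside $\C\sem K^{\doub}$. Since $g_K^{-1}=f_K$ is continuous and satisfies $f_K(w)=w+O(1/w)$, the value $g_K(z)$ stays bounded (it tends to $\I$ only as $z\to\I$) and cannot stay in a compact subset of $\C\sem S_K$ (a conformal homeomorphism is proper); hence every subsequential limit of $g_K(z)$ lies in $S_K$, so $\limsup|g_K(z)|\le 2r$ and therefore $\limsup_{z\to\zeta}|\vphi(z)|\le 2r+|\zeta|\le 3r$. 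As this holds at every boundary point and $\vphi$ is bounded on $\C\sem K^{\doub}$ with $\vphi(\I)=0$, the maximum principle gives $|\vphi|\le 3r$ throughout $\C\sem K^{\doub}$, which is (\ref{f-z0}).

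For the sharp estimates I would pull back by $z\mapsto r/z$. Since $K^{\doub}\subset\{|z|\le r\}$, the function $h:=g_K-\id$ is analytic on $\{|z|>r\}$, with $|h|\le 3r$ there by (\ref{f-z0}), $h(z)\to0$ as $z\to\I$, and $zh(z)\to\hcap(K)$. Hence $u(w):=h(r/w)$ extends analytically to the unit disc $\D$ with $u(0)=0$, $|u|\le 3r$ on $\D$, and $u'(0)=\hcap(K)/r\in[0,r]$. By the Schwarz lemma $v(w):=u(w)/w$ is analytic on $\D$ with $|v|\le 3r$ and $|v(0)|=|u'(0)|\le r$, so the Schwarz--Pick inequality applied to $v/(3r):\D\to\D$ yields $|v(w)|\le\frac{3r}{2}$ for $|w|\le\frac15$. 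Since $g_K(z)-z=h(z)=\frac rz\,v(r/z)$, for $|z|\ge 5r$ this reads $|g_K(z)-z|\le\frac{3r^2}{2|z|}\le\frac{2r^2}{|z|}$, which is (\ref{1}); and since $g_K'(z)-1=h'(z)=-\frac{r}{z^2}u'(r/z)$ while a Cauchy estimate on $\D$ gives $|u'(w)|\le 4r$ for $|w|\le\frac15$, we get $|g_K'(z)-1|\le\frac{4r^2}{|z|^2}\le\frac{5r^2}{|z|^2}$ for $|z|\ge 5r$, which is (\ref{3}).

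The main obstacle is (\ref{f-z0}): one must run the maximum principle on the unbounded domain $\C\sem K^{\doub}$ whose boundary $\pa K^{\doub}$ can be very irregular, and justify that $g_K(z)$ approaches $S_K$ as $z\to\pa K^{\doub}$ --- the boundary-correspondence input is where the real content lies. Once (\ref{f-z0}) is in hand, (\ref{1}) and (\ref{3}) are elementary. A variant of the third paragraph avoids the Schwarz-lemma step by invoking the Herglotz representation $g_K(z)-z=\int_{[a_K,b_K]}\frac{d\mu_K(x)}{z-x}$ with $\mu_K\ge0$ of total mass $\hcap(K)\le r^2$, and using $|z-x|\ge\frac45|z|$ for $|z|\ge 5r$, $|x|\le r$; this gives (\ref{1}) and (\ref{3}) directly but trades the Schwarz lemma for the representation itself.
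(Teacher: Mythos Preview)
The paper itself does not prove this proposition; it simply cites \cite[Lemmas 2.5 and 2.6]{existence}. Your self-contained argument is therefore more than the paper provides, and it is essentially correct.

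One minor numerical slip: in the Schwarz--Pick step, with $|v|\le 3r$ on $\D$ and $|v(0)|\le r$, the inequality gives $|v(w)/(3r)|\le\dfrac{|w|+1/3}{1-|w|/3}$, so for $|w|\le\tfrac15$ one obtains $|v(w)|\le\tfrac{12r}{7}$ rather than $\tfrac{3r}{2}$. Since $\tfrac{12}{7}<2$, the conclusion (\ref{1}) is unaffected. The rest of the argument---the reduction to $x_0=0$, the inclusion $K\subset D_r$ giving the $\hcap$ and $S_K$ bounds via monotonicity and Proposition~\ref{SKSH}, the properness/maximum-principle argument for (\ref{f-z0}), and the Cauchy estimate for (\ref{3})---is sound. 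Your observation that the properness of the conformal homeomorphism $g_K$ forces $g_K(z)$ to cluster on $S_K$ as $z\to\pa K^{\doub}$, together with the fact that $g_K(z)\to\infty$ only as $z\to\infty$ (from $f_K(w)=w+O(1/w)$), is exactly what is needed to make the maximum-principle step rigorous.

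Regarding the Herglotz-type variant you mention at the end: it is indeed the cleanest route to (\ref{1}) and (\ref{3}), since $\mu_K\ge 0$ with total mass $\hcap(K)\le r^2$ and support in $[a_K,b_K]\subset[-r,r]$ immediately gives $|g_K(z)-z|\le r^2/\dist(z,[-r,r])$ and $|g_K'(z)-1|\le r^2/\dist(z,[-r,r])^2$. Note, however, that this representation alone does not yield (\ref{f-z0}) at points of $\C\sem K^{\doub}$ close to $[a_K,b_K]$, so some form of your boundary/maximum-principle argument is still required for that estimate.
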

\begin{proof}
This is  \cite[Lemmas 2.5 and 2.6]{existence}.
\end{proof}


\begin{Proposition}
Let $H$ be a nonempty $\HH$-hull, and ${\mathcal H}(H)$ denote the space of  $\HH$-hulls, which are subsets of $H$. Then ${\mathcal H}(H)$ is compact in the sense that any sequence $(K_n)$ in ${\mathcal H}(H)$ contains a convergent subsequence $(K_{n_k})$ whose limit $K$ is contained in ${\mathcal H}(H)$.  Here the convergence means that $g_{K_{n_k}}$ converges to $g_K$ locally uniformly in $\C\sem H^{\doub}$.
\label{compact-prop}
\end{Proposition}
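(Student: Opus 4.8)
\emph{The plan} is to combine the uniform estimates of Proposition \ref{small} with Montel's theorem (to extract a convergent subsequence of the maps), Hurwitz's theorem (to see the subsequential limit is univalent and hence comes from a hull), and a Harnack argument (to see that this limit hull does not escape $H$).

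\emph{Step 1 (uniform bounds).} Fix $x_0\in\R$ and $r>0$ with $\rad_{x_0}(H)\le r$. Then every $K\in\mathcal H(H)$ satisfies $\rad_{x_0}(K)\le r$, so by Proposition \ref{small} we have $S_K\subseteq I_0:=[x_0-2r,x_0+2r]$ and $|g_K(z)-z|\le 3r$ for $z\in\C\sem K^{\doub}$; moreover by (\ref{1}), $|g_K(z)-z|\le 2r^2/|z-x_0|$ whenever $|z-x_0|\ge 5r$. Since $K^{\doub}\subseteq H^{\doub}$, each $g_K$ is defined on the fixed domain $\C\sem H^{\doub}$; and since $S_K\subseteq I_0$, each $f_K=g_K^{-1}$ is defined on the fixed domain $\C\sem I_0$, with $|f_K(w)-w|\le 3r$ there (set $z=f_K(w)$ in the previous bound).

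\emph{Step 2 (subsequential limits).} Given a sequence $(K_n)$ in $\mathcal H(H)$, the families $\{g_{K_n}\}$ on $\C\sem H^{\doub}$ and $\{f_{K_n}\}$ on $\C\sem I_0$ are uniformly bounded on compact sets, hence normal; pass to a subsequence (not relabeled) along which $g_{K_n}\to g$ locally uniformly on $\C\sem H^{\doub}$ and $f_{K_n}\to f$ locally uniformly on $\C\sem I_0$. By (\ref{1}) the limits satisfy $g(z)-z\to 0$ and $f(w)-w\to 0$ as the argument tends to $\infty$, so they are non-constant, hence univalent by Hurwitz's theorem (each $g_{K_n}$, resp.\ $f_{K_n}$, is univalent on the connected set $\C\sem H^{\doub}$, resp.\ $\C\sem I_0$). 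Both limits are symmetric under $z\mapsto\bar z$; since $f$ is a non-constant limit of maps sending $\HH$ into $\HH$, the open mapping theorem gives $f(\HH)\subseteq\HH$ and $f(-\HH)\subseteq-\HH$, and $f(\R\sem I_0)\subseteq\R$ because each $f_{K_n}$ maps $\R\sem I_0\subseteq\R\sem S_{K_n}$ into $\R$.

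\emph{Step 3 (the limit hull) and Step 4 ($K\subseteq H$).} View $f$ as a conformal map on $\widehat\C\sem I_0$ fixing $\infty$, and set $\til K:=\C\sem f(\C\sem I_0)$; this is a compact subset of $\C$ (it avoids $\infty$ and, by the large-argument bound, lies in a bounded region depending only on $x_0,r$), symmetric under conjugation. Let $K:=\til K\cap\HH$. Then $K$ is relatively closed in $\HH$ and bounded, and using injectivity of $f$ together with $f(\HH)\subseteq\HH$, $f(-\HH)\subseteq-\HH$, $f(\R\sem I_0)\subseteq\R$ one gets $\HH\sem K=\HH\cap f(\C\sem I_0)=f(\HH)$, which is simply connected; hence $K$ is an $\HH$-hull and $g_K=(f|_{\HH})^{-1}$ by the normalization at $\infty$. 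It remains to show $K\subseteq H$. Fix $z\in\HH\sem H$. Each $\Imm g_{K_n}$ is a positive harmonic function on the fixed connected domain $\HH\sem H$ (since $\HH\sem H\subseteq\HH\sem K_n$), so if $\Imm g_{K_n}(z)\to 0$ along a further subsequence, Harnack's inequality along a chain of balls in $\HH\sem H$ forces $\Imm g_{K_n}(z^*)\to 0$ for the fixed point $z^*:=x_0+5ri$, contradicting $\Imm g_{K_n}(z^*)\ge 5r-2r^2/|z^*-x_0|>0$ from (\ref{1}). Hence $g(z)\in\HH\subseteq\C\sem I_0$; then, since $f_{K_n}\to f$ uniformly near $g(z)$ while $g_{K_n}(z)\to g(z)$ and $f_{K_n}(g_{K_n}(z))=z$, we get $f(g(z))=z$, so $z\in f(\HH)=\HH\sem K$. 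Thus $\HH\sem H\subseteq\HH\sem K$, i.e.\ $K\subseteq H$, so $K\in\mathcal H(H)$; and $g=g_K$ on $\HH\sem H$. Finally, since $K^{\doub}\subseteq H^{\doub}$, both $g$ and $g_K$ are holomorphic on the connected set $\C\sem H^{\doub}$ and agree on the open subset $\HH\sem H$, hence agree on all of $\C\sem H^{\doub}$; this gives $g_{K_n}\to g_K$ locally uniformly on $\C\sem H^{\doub}$ along the chosen subsequence, as required.

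\emph{Main obstacle.} Every step except the containment $K\subseteq H$ is formal. The genuine point is ruling out that $\Imm g_{K_n}$ degenerates to $0$ at an interior point of $\HH\sem H$ — i.e.\ that the limiting hull does not grow beyond $H$ — which is exactly where the Harnack argument on the fixed domain $\HH\sem H$, anchored by the far-field estimate (\ref{1}), is needed.
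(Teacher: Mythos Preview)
Your proof is correct. The paper itself does not prove this statement at all --- it simply cites \cite[Lemma 5.4]{LERW} --- so your argument is a genuine self-contained alternative rather than a different route to the same proof. The combination of Montel/Hurwitz to produce a univalent limit $f$, the identification $\HH\setminus K=f(\HH)$ via the symmetry and half-plane preservation of $f$, and especially the Harnack argument anchored at $z^*=x_0+5ri$ to rule out $K\not\subseteq H$, are all sound. One small point worth making explicit (you use it implicitly when invoking Harnack and the identity theorem): $\HH\setminus H$ and $\C\setminus H^{\doub}$ are connected, which follows since $g_H$ maps them conformally onto $\HH$ and $\C\setminus S_H$ respectively. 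Your proof has the advantage of being fully internal to the paper's toolkit (Proposition \ref{small} and elementary complex analysis), at the cost of some length; the paper's citation is of course shorter but opaque.
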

\begin{proof}
  This is \cite[Lemma 5.4]{LERW}.
\end{proof}

\subsection{Chordal Loewner Processes}
Let $U(t)$, $0\le t<T$, be a real valued continuous function, where $T\in(0,\infty]$. The chordal Loewner equation driven by $U$ is the equation \BGE \pa_t g_t(z)=\frac 2{g_t(z)-U_t},\quad g_0(z)=z.\label{chordal}\EDE
For every $z\in\C$, let $\tau^*_z$ denote the first time that the solution $g_\cdot(z)$ blows up; when such time does not exist, $\tau^*_z$ is set to be $\infty$. Let $K_t=\{z\in\HH:\tau^*_z\le t\}$. We call $g_t$ and $K_t$, $0\le t<T$, the chordal Loewner maps and hulls, respectively, driven by $U$. It turns out that, for  each $t\in[0,T)$, $K_t$ is an $\HH$-hull, $\hcap(K_t)=2t$, and $g_t=g_{K_t}$.

\begin{Proposition}
  For any $0\le t<T$,
  $$\{U_t\}=\bigcap_{\eps\in(0,T-t)} \lin{K_{t+\eps}/K_t}.$$ \label{K/U}
\end{Proposition}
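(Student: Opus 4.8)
The plan is to prove the two-sided inclusion $\{U_t\} = \bigcap_{\eps\in(0,T-t)} \lin{K_{t+\eps}/K_t}$ by analyzing the small-time behavior of the quotient hulls $K_{t+\eps}/K_t$. By the Markov-type property of the chordal Loewner equation, after reparametrizing time, $K_{t+\eps}/K_t = g_{K_t}(K_{t+\eps}\sem K_t)$ is the Loewner hull at time $\eps$ generated by the shifted driving function $s\mapsto U_{t+s}$, $s\in[0,T-t)$. So it suffices to prove the statement at $t=0$, i.e.\ $\{U_0\} = \bigcap_{\eps\in(0,T)}\lin{K_\eps}$, and then apply this to the shifted process.

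First I would establish the inclusion $\{U_0\}\subset\bigcap_\eps \lin{K_\eps}$. For this, fix $\eps>0$ and show $U_0\in\lin{K_\eps}$. The standard way is to track the real point: for a real starting point $x = U_0$, the solution $g_t(x)$ of (\ref{chordal}) is forced to collide with $U_t$ in finite time whenever $x=U_0$, or rather, one uses that the driving point is always in the closure of the generated hull. Concretely, if $U_0\notin\lin{K_\eps}$ then $U_0$ lies in the unbounded complementary component, so $g_{K_\eps}$ is defined and analytic near $U_0$ (using the Schwarz reflection extension to $\C\sem K_\eps^{\doub}$ from Section \ref{H-hull}); but one can then derive a contradiction with the blow-up behavior of the ODE $\pa_t g_t(U_0) = 2/(g_t(U_0)-U_t)$ near $t=\eps$, since $g_t(U_0)-U_t$ stays bounded away from $0$, forcing $U_0\notin K_\eps$ to persist — actually the clean argument is: the point $U_t$ is the image under $g_{K_t}$ of the "tip", and more robustly one shows $\dist(U_0, K_\eps)=0$ by noting $g_{K_\eps}$ extends continuously to $U_0$ with $g_{K_\eps}(U_0)$ equal to an endpoint of $S_{K_\eps}$ only if $U_0\in\lin{K_\eps}$. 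The cleanest route is to use that $[a_{K_\eps},b_{K_\eps}]\ni U_0$: indeed $U_t\to U_0$ as $t\to 0^+$ and $U_t\in[a_{K_t},b_{K_t}]\subset[a_{K_\eps},b_{K_\eps}]$ for $t\le\eps$, and $[a_{K_\eps},b_{K_\eps}]$ is closed, so $U_0\in[a_{K_\eps},b_{K_\eps}]$. Combined with the fact (for nonempty $K_\eps$) that $a_{K_\eps},b_{K_\eps}\in\lin{K_\eps}$ and that $\lin{K_\eps}\cap\R$ is a compact set whose min and max are $a_{K_\eps},b_{K_\eps}$ — but to get $U_0$ itself in $\lin{K_\eps}$ one needs a bit more; I expect the right tool is the continuity estimate $|g_{K_\eps}(z)-z|\le 3\rad(K_\eps)$ from Proposition \ref{small} together with $\rad_{U_0}(K_\eps)\to 0$ as $\eps\to 0$, which at least localizes $\lin{K_\eps}$ near $U_0$.

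Next, the reverse inclusion $\bigcap_\eps\lin{K_\eps}\subset\{U_0\}$: since $\hcap(K_\eps)=2\eps\to 0$ and $K_\eps$ is an $\HH$-hull, $\rad(K_\eps)\to 0$ as well (half-plane capacity controls radius up to constants via Proposition \ref{small}), and moreover all $\lin{K_\eps}$ are nested decreasing, so $\bigcap_\eps\lin{K_\eps}$ is either empty or a single point. It is nonempty by the first part (it contains $U_0$), hence it is exactly $\{U_0\}$, provided we know the common point is $U_0$ — which again follows from the first inclusion. So really the whole proposition reduces to: (a) $\bigcap_\eps\lin{K_\eps}$ is a single point, which follows from nestedness plus $\rad(K_\eps)\to 0$; and (b) that point is $U_0$, via $U_t\in\lin{K_t}$ for all $t$ (equivalently $U_t\in[a_{K_t},b_{K_t}]$) and continuity of $U$ at $0$.

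The main obstacle I anticipate is part (b) in the degenerate-looking step of showing $U_t\in\lin{K_t}$ for each $t>0$ — i.e.\ that the driving point genuinely lies in the closure of the hull it generates, not merely in the interval $[a_{K_t},b_{K_t}]$. The resolution is the standard Loewner-theory fact: for any $\del>0$, the point $U_t$ is approached by $\gamma$-like behavior, or more elementarily, $\lin{K_{t+\eps}/K_t}$ shrinks to $\{U_t\}$ and the quotients relate back to $K_t$ via $K_t\cup g_{K_t}^{-1}(\cdots)$ — but this risks circularity. The non-circular argument: $g_{K_t}$ maps a neighborhood in $\HH\sem K_t$ of any boundary prime end not over $[a_{K_t},b_{K_t}]$ to a neighborhood in $\HH$ of a point outside $[c_{K_t},d_{K_t}]$, and one shows by the Loewner ODE that $g_t(z)\to U_t$ precisely for $z$ tending to $K_t$ "at the tip", so $U_t\in\lin{K_t}$. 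I would present this via the estimate that for $z$ with $g_s(z)$ defined on $[0,t]$, if $g_s(z)$ stays at distance $\ge\del$ from $U_s$ for all $s\le t$ then $|g_t(z)-z|\le 2t/\del$, contrapositive of which gives points forced into $K_t$ near $U_t$; letting such $z$ accumulate yields a point of $\lin{K_t}$ at which $g_{K_t}$ "equals" $U_t$ in the prime-end sense, so $U_t\in S_{K_t}\cap(\text{something})$ — and then $\rad$-smallness finishes it. I expect this to be the only place requiring genuine care; everything else is compactness and the capacity–radius comparison already packaged in Propositions \ref{SKSH}, \ref{small}, and \ref{compact-prop}.
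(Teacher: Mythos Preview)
The paper does not give a self-contained proof of this proposition; it simply cites \cite[Theorem 2.6]{LSW1}. Your attempt is therefore more ambitious than what the paper does, and the overall strategy (reduce to $t=0$ via the shift property, then show the nested intersection is the singleton $\{U_0\}$) is sound in outline.

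However, there is a genuine gap. You claim that $\hcap(K_\eps)=2\eps\to 0$ implies $\rad(K_\eps)\to 0$ ``via Proposition \ref{small}''. This is the wrong direction: Proposition \ref{small} says that $\rad_{x_0}(K)\le r$ implies $\hcap(K)\le r^2$, not the converse. For general $\HH$-hulls, small half-plane capacity does \emph{not} force small radius (a long thin hull close to $\R$ can have arbitrarily small $\hcap$ but large diameter). The correct route to $\rad_{U_0}(K_\eps)\to 0$ uses the Loewner ODE and the continuity of $U$ directly: for instance, from $\pa_t(\Imm g_t(z))^2\ge -4$ one gets $\Imm z\le 2\sqrt\eps$ for every $z\in K_\eps$, and a similar ODE estimate controls $\Ree z-U_0$ in terms of $\sup_{s\le\eps}|U_s-U_0|$. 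This is precisely the content of the result the paper cites from \cite{LSW1}.

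Once $\rad_{U_0}(K_\eps)\to 0$ is in hand, your argument can be shortened considerably: the sets $\lin{K_\eps}$ are nonempty, compact, and nested (decreasing as $\eps\downarrow 0$), so their intersection is nonempty by compactness; and any point in the intersection lies within $\rad_{U_0}(K_\eps)$ of $U_0$ for every $\eps$, hence equals $U_0$. You do not need a separate verification that $U_0\in\lin{K_\eps}$ or that $U_t\in\lin{K_t}$, so the ``main obstacle'' you anticipate (and the circularity worry) dissolves entirely.
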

\begin{proof}
 This a restatement of \cite[Theorem 2.6]{LSW1}.
\end{proof}

\begin{Corollary}
If for some $\HH$-hull $H$ and $t_0\in(0,T)$, $K_{t_0}\subset H$,  then $U_t\in S_H$ for $0\le t<t_0$. \label{UinSK}
\end{Corollary}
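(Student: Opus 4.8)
The plan is to read this off from Propositions~\ref{K/U} and~\ref{SKSH} with essentially no computation. Fix $t\in[0,t_0)$ and $\eps\in(0,t_0-t)$; since $t_0<T$ we have $\eps\in(0,T-t)$, so Proposition~\ref{K/U} applies. The chordal Loewner hulls are nested, $K_s\subset K_{s'}$ for $s\le s'$ (immediate from $K_s=\{z\in\HH:\tau^*_z\le s\}$), so $K_t\subset K_{t+\eps}\subset K_{t_0}\subset H$, the last inclusion being the hypothesis. Moreover $K_{t+\eps}/K_t$ is a well-defined $\HH$-hull, and it is nonempty: if it were empty we would have $K_t=K_{t+\eps}$, contradicting $\hcap(K_{t+\eps})-\hcap(K_t)=2\eps>0$.

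Next I would chain the inclusions among the intervals $S_{(\cdot)}$. Proposition~\ref{SKSH} gives $S_{K_{t+\eps}/K_t}\subset S_{K_{t+\eps}}$ (its assertion $S_{K_2/K_1}\subset S_{K_2}$ with $K_1=K_t$, $K_2=K_{t+\eps}$) and $S_{K_{t+\eps}}\subset S_H$ (its assertion $S_{K_1}\subset S_{K_2}$ with $K_1=K_{t+\eps}$, $K_2=H$), hence $S_{K_{t+\eps}/K_t}\subset S_H$. Moreover, by the assertion $[a_K,b_K]\subset S_K$ of the same proposition, together with $\lin{K_{t+\eps}/K_t}\cap\R\subset[a_{K_{t+\eps}/K_t},b_{K_{t+\eps}/K_t}]$ (which is just the definition of $a_{(\cdot)}$ and $b_{(\cdot)}$), every real point of $\lin{K_{t+\eps}/K_t}$ lies in $S_{K_{t+\eps}/K_t}$. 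Finally, Proposition~\ref{K/U} gives $U_t\in\lin{K_{t+\eps}/K_t}$, and $U_t$ is real, so $U_t\in S_{K_{t+\eps}/K_t}\subset S_H$. As $t\in[0,t_0)$ was arbitrary, this is the claim.

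I do not expect any serious obstacle here. The only things to keep in mind are: (i) one must use that $U_t$ is \emph{real}, since $\lin{K_{t+\eps}/K_t}$ is of course not contained in the real interval $S_H$; and (ii) one should verify that $K_{t+\eps}/K_t$ is a genuine nonempty $\HH$-hull, so that the cited propositions apply and $[a_{(\cdot)},b_{(\cdot)}]$ is non-degenerate. The degenerate case $t=0$ is covered by exactly the same argument, with $K_0=\emptyset$ and $K_{t+\eps}/K_t=K_\eps$.
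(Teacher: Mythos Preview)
Your proof is correct and follows essentially the same route as the paper's: both read the statement off from Propositions~\ref{K/U} and~\ref{SKSH}. The only cosmetic difference is that the paper takes $\eps=t_0-t$ directly, obtaining $U_t\in[a_{K_{t_0}/K_t},b_{K_{t_0}/K_t}]\subset S_{K_{t_0}/K_t}\subset S_{K_{t_0}}\subset S_H$, whereas you work with a generic small $\eps$ and then use $S_{K_{t+\eps}}\subset S_H$; this is the same argument up to which link in the chain of inclusions is made explicit.
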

\begin{proof}
 By Proposition \ref{K/U}, for every $t\in[0,t_0)$, $U_t\in [a_{K_{t_0}/K_t},b_{K_{t_0}/K_t}]$, which implies by Proposition \ref{SKSH} that $U_t\in S_{K_{t_0}/K_t}\subset S_{K_{t_0}}\subset S_H$. By the continuity of $U$, we also have $U_{t_0}\in S_H$.
\end{proof}

We call the maps $Z_t=g_t-U_t$ the centered Loewner maps driven by $U$.

\begin{Proposition}
 Let $b>a\in [0,T)$. Suppose that $\rad_{x_0}(K_b/K_a)\le r$ for some $x_0\in\R$ and $r>0$. Then  $|Z_{a}(z)-Z_b(z)|\le 7r$ for any $z\in\lin\HH\sem \lin{K_b}$.
\label{centered-Delta}
\end{Proposition}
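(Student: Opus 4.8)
The plan is to reduce the statement to Proposition~\ref{small} by a change of variables, exploiting that $Z_a$ and $Z_b$ differ, after conjugation by $g_a$, precisely by the centered Loewner map of the quotient hull $K_b/K_a$. First I would write, for $z\in\lin\HH\sem\lin{K_b}$,
\BGE Z_b(z)=g_b(z)-U_b=g_{K_b/K_a}(g_a(z))-U_b,\EDE
using the hull composition identity $g_{K_b}=g_{K_b/K_a}\circ g_{K_a}$ and $g_a=g_{K_a}$. Since $K_b/K_a$ is a nonempty $\HH$-hull with $\rad_{x_0}(K_b/K_a)\le r$, Proposition~\ref{small} gives $|g_{K_b/K_a}(w)-w|\le 3r$ for all $w\in\C\sem(K_b/K_a)^{\doub}$, and in particular for $w=g_a(z)$, which lies in $\HH\sem(K_b/K_a)$ (or on $\R$) because $z\notin\lin{K_b}$ forces $g_a(z)\notin\lin{K_b/K_a}=\lin{g_a(K_b\sem K_a)}$. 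Hence
\BGE |Z_b(z)-(g_a(z)-U_b)|=|g_{K_b/K_a}(g_a(z))-g_a(z)|\le 3r.\EDE
Comparing with $Z_a(z)=g_a(z)-U_a$, it remains to control $|U_a-U_b|$ by a bounded multiple of $r$.

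For that I would invoke the characterization of the driving function in terms of the quotient hulls. By Proposition~\ref{K/U} (or rather its proof via Proposition~\ref{SKSH}), both $U_a$ and $U_b$ lie in the interval $[a_{K_b/K_a},b_{K_b/K_a}]$: indeed $U_b\in\bigcap_\eps\lin{K_{b+\eps}/K_b}$ and, since the quotient hulls are nested appropriately, $U_b\in\lin{K_b/K_a}$ so $U_b\in[a_{K_b/K_a},b_{K_b/K_a}]$; and $U_a\in\lin{K_{a+\eps}/K_a}\subset\lin{K_b/K_a}$ for small $\eps$, giving the same. Since $\rad_{x_0}(K_b/K_a)\le r$, the real trace of $K_b/K_a$ lies in $[x_0-r,x_0+r]$, so $[a_{K_b/K_a},b_{K_b/K_a}]\subset[x_0-r,x_0+r]$ has length at most $2r$, whence $|U_a-U_b|\le 2r$. (One should take a tiny bit of care at the endpoint when $a=0$ or with the continuity of $U$ at $t=b$, exactly as in the proof of Corollary~\ref{UinSK}.)

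Combining the two bounds via the triangle inequality,
\BGE |Z_a(z)-Z_b(z)|\le |g_a(z)-U_a-(g_a(z)-U_b)|+|g_a(z)-U_b-Z_b(z)|\le |U_a-U_b|+3r\le 5r,\EDE
which is even a little stronger than the claimed $7r$; the slack $7r$ presumably absorbs a less careful estimate, e.g.\ using $S_{K_b/K_a}\subset[x_0-2r,x_0+2r]$ from Proposition~\ref{small} in place of $[a,b]\subset[x_0-r,x_0+r]$, giving $|U_a-U_b|\le 4r$ and total $7r$. I do not expect a genuine obstacle here: the only slightly delicate point is verifying that $g_a(z)\notin(K_b/K_a)^{\doub}$ for $z\in\lin\HH\sem\lin{K_b}$, i.e.\ that the real points of $z$ are handled correctly — this follows because $g_a$ extends by Schwarz reflection across $\R\sem[a_{K_a},b_{K_a}]$ and maps $\C\sem K_b^{\doub}$ into $\C\sem(K_b/K_a)^{\doub}$, so the estimate $|g_{K_b/K_a}(w)-w|\le 3r$ from Proposition~\ref{small} applies verbatim on the doubled domain.
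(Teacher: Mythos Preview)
Your approach is essentially identical to the paper's: decompose $|Z_a(z)-Z_b(z)|\le |g_a(z)-g_b(z)|+|U_a-U_b|$, bound the first term by $3r$ via Proposition~\ref{small} applied to $g_{K_b/K_a}$, and bound $|U_a-U_b|$ using that $U_a,U_b\in S_{K_b/K_a}\subset[x_0-2r,x_0+2r]$, yielding $4r$ and a total of $7r$. The paper phrases this via the time-shifted Loewner chain $U_{a;t}=U_{a+t}$, $K_{a;t}=K_{a+t}/K_a$ and invokes Corollary~\ref{UinSK} directly, but the content is the same.

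One correction: your attempted sharpening to $5r$ does not go through. While $U_a\in[a_{K_b/K_a},b_{K_b/K_a}]$ is fine (since $U_a\in\lin{K_{a+\eps}/K_a}\subset\lin{K_b/K_a}$), the claim $U_b\in[a_{K_b/K_a},b_{K_b/K_a}]$ is false in general. For instance, if the shifted Loewner curve $g_a(\gamma(a+\cdot))$ is simple and leaves $\R$ immediately, then $\lin{K_b/K_a}\cap\R=\{U_a\}$, so $[a_{K_b/K_a},b_{K_b/K_a}]=\{U_a\}$, yet typically $U_b\neq U_a$. The quotient hulls $K_{b+\eps}/K_b$ are not nested inside $K_b/K_a$; what one gets is only $U_b\in S_{K_b/K_a}$, exactly as in Corollary~\ref{UinSK}. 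So the $7r$ is not mere slack but the natural outcome of the available inclusions.
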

\begin{proof}
Let $U_{a;t}=U_{a+t}$, $g_{a;t}=g_{a+t}\circ g_a^{-1}$, and $K_{a;t}=K_{a+t}/K_a$, $0\le t<T-a$. It is straightforward to check that $g_{a;\cdot}$ and $K_{a;\cdot}$ are respectively the chordal Loewner maps and hulls driven by $U_{a;\cdot}$. By Corollary \ref{UinSK}, $U_a,U_b\in S_{K_{a;b-a}}$. By the assumption, $\rad_{x_0 }(K_{a;b-a})\le r$. By Proposition \ref{small}, $S_{K_{a;b-a}}\subset [x_0 -2r,x_0 +2r]$. Thus, $|U_a-U_b|\le 4r$.
By Proposition \ref{small}, $|g_{a;b-a}(z)-z|\le 3r$ for any $z\in \lin\HH\sem \lin{K_{a;b-a}}$. So for any $z\in\lin\HH\sem \lin{K_b}$, $|g_a(z)-g_b(z)|\le 3 r$. Since $Z_t=g_t-U_t$, and $|U_a-U_b|\le 4r$, we get the conclusion.
\end{proof}

If there exists a function $\gamma(t)$, $0\le t<T$, in $\lin\HH$, such that for any $t$, $\HH\sem K_t$ is the unbounded connected component of $\HH\sem \gamma[0,t]$, we say that such $\gamma$ is the chordal Loewner curve driven by $U$. Such $\gamma$ may not exist in general, but when it exists, it is determined by $U$, and for each $t\in[0,T)$, $g_t^{-1}$ and $Z_t^{-1}$ extend continuously from $\HH$ to $\lin\HH$ and satisfy $g_t^{-1}(U_t)=Z_t^{-1}(0)=\gamma(t)$.


\subsection{Chordal SLE}
Let $\kappa>0$. Let $B_t$ be a standard Brownian motion. If the driving function is $U_t=\sqrt\kappa B_t$, $0\le t<\infty$, then the chordal Loewner curve driven by $U$ exists,  starts from $0$ and ends at $\infty$  (cf.\ \cite{RS}). Such curve is called a chordal SLE$_\kappa$ trace or curve in $\HH$ from $0$ to $\infty$. Its geometric property depends on $\kappa$: if $\kappa\le 4$, it is simple; if $4<\kappa<8$, it is not simple and not space-filling; if $\kappa\ge 8$, it is space-filling (cf.\ \cite{RS}). The Hausdorff dimension of an SLE$_\kappa$ curve is $\min\{1+\frac\kappa 8,2\}$ (cf. \cite{RS,Bf}).

The definition of chordal SLE extends to general simply connected domains via conformal maps. Let $D$ be a simply connected domain with two distinct boundary points (more precisely, prime ends) $a,b$. Let $f$ be a conformal map from $\HH$ onto $D$, which sends $0$ and $\infty$ respectively to $a$ and $b$. Let $\gamma$ be a chordal SLE$_\kappa$ curve in $\HH$ from $0$ to $\infty$. Then $f\circ \gamma$ is called a chordal SLE$_\kappa$ curve in $D$ from $a$ to $b$.

A remarkable property of SLE is the Domain Markov Property (DMP). Suppose $\gamma$ is a chordal SLE$_\kappa$ curve in $\HH$ from $0$ to $\infty$, which generates the $\HH$-hulls $K_t$, $0\le t<\infty$, and a filtration $\F=(\F_t)_{t\ge 0}$. Let $\tau$ be a finite $\F$-stopping time. Conditionally on $\F_\tau$,
 $\gamma(\tau+\cdot)$ has the same law of a chordal SLE$_\kappa$ curve in $\HH\sem K_\tau$ from $\gamma(\tau)$ to $\infty$. Equivalently, there is a chordal SLE$_\kappa$ curve $\til\gamma$ in $\HH$ from $0$ to $\infty$ independent of $\F_\tau$ such that $\gamma(\tau+t)=Z_\tau^{-1}(\til \gamma(t))$, $t\ge 0$.
Here $Z_\tau$ is the centered Loewner map at the time $\tau$ that corresponds to $\gamma$, and its inverse $Z_\tau^{-1}$ has been extended continuously to $\lin\HH$.

We will also use the left-right symmetry and rescaling property of chordal SLE. Suppose $\gamma$ is a chordal SLE$_\kappa$ curve in $\HH$ from $0$ to $\infty$. The left-right symmetry states that, if $f(z)=-\lin z$ is the reflection about $i\R$, then $f\circ \gamma$ has the same law as $\gamma$. This follows easily from that $(-\sqrt \kappa B_t)$ has the same law as $(\sqrt\kappa B_t)$. The rescaling property states that, for any $c>0$, $(c\gamma(t))$ has the same law as $(\gamma(\sqrt c t))$. This follows easily from the rescaling property of the Brownian motion.

\subsection{Extremal Length}\label{Extremal}
We will need some lemmas on extremal length, which is a nonnegative quantity $\lambda(\Gamma)$ associated with a family $\Gamma$ of rectifiable curves (\cite[Definition 4-1]{Ahl}). One remarkable property of extremal length is its conformal invariance (\cite[Section 4-1]{Ahl}), i.e., if every $\gamma\in\Gamma$ is contained in a domain $\Omega$, and $f$ is a conformal map defined on $\Omega$, then $\lambda(f(\Gamma))=\lambda(\Gamma)$.
We use $d_\Omega(X,Y)$ to denote the extremal distance between $X$ and $Y$ in $\Omega$, i.e., the extremal length of the family of curves in $\Omega$ that connect $X$ with $Y$. It is known that in the special case when $\Omega$ is a semi-annulus $\{z\in\HH:R_1<|z-x|<R_2\}$, where $x\in\R$ and $R_1>R_1>0$, and $X$ and $Y$ are the two boundary arcs $\{z\in\HH:|z-x|=R_j\}$, $j=1,2$, then $d_\Omega(X,Y)=\log(R_2/R_1)/\pi$ (\cite[Section 4-2]{Ahl}). We will use the comparison principle (\cite[Theorem 4-1]{Ahl}): if every $\gamma\in\Gamma$ contains a $\gamma'\in\Gamma'$, then $\lambda(\Gamma)\ge \lambda(\Gamma')$. Thus, if every curve in $\Omega$ connecting $X$ with $Y$  a semi-annulus with radii $R_1,R_2$, then $d_\Omega(X,Y)\ge \log(R_2/R_1)/\pi$. We will also use the composition law (\cite[Theorem 4-2]{Ahl}): if for $j=1,2$, every $\gamma_j$ in a family $\Gamma_j$ is contained in $\Omega_j$, where $\Omega_1$ and $\Omega_2$ are disjoint open sets, and if every $\gamma$ in another family $\Gamma$ contains a $\gamma_1\in\Gamma_1$ and a $\gamma_2\in\Gamma_2$, then $\lambda(\Gamma)\ge \lambda(\Gamma_1)+\lambda(\Gamma_2)$.

The following propositions are applications of Teichm\"uller Theorem.

\begin{Proposition}
	Let $S_1$ and $S_2$ be a disjoint pair of connected closed subsets of $\lin\HH$ that intersect $\R$ such that $S_1$ is bounded and $S_2$ is unbounded. Let $z_j\in S_j\cap\R$, $j=1,2$. Then
$$1\wedge \frac{\rad_{z_1}(S_1)}{|z_1-z_2|}\le 32 e^{-\pi d_{\HH}(S_1,S_2)}.$$
\label{lem-extremal2}
\end{Proposition}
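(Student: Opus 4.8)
The plan is to reduce the estimate to the standard Teichmüller extremal length bound and transport it to $\HH$ via reflection. First I would note that the quantity $1\wedge \frac{\rad_{z_1}(S_1)}{|z_1-z_2|}$ is a conformally natural substitute for a ``modulus'' of the configuration $(S_1,S_2)$: either $S_1$ is small compared to the gap $|z_1-z_2|$, in which case $\rad_{z_1}(S_1)/|z_1-z_2|$ controls the ``thickness'' of the thin set relative to the separation, or $S_1$ is comparable to or larger than the gap and the bound is trivially $1\le 32 e^{-\pi d_\HH(S_1,S_2)}$ once $d_\HH(S_1,S_2)$ is bounded (if $d_\HH$ is large the left side being $\le 1$ is still fine, so the only content is when $S_1$ is thin). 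The natural tool is the Teichmüller extremal domain: the complement of $[-1,0]\cup[\rho,\infty)$ has a known extremal distance between the two slits, growing like $\frac1\pi\log\rho + O(1)$ as $\rho\to\infty$, which is exactly the regime that produces the exponential bound.

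The key steps, in order, would be: (1) Apply Schwarz reflection. Since $S_1,S_2$ are connected closed subsets of $\lin\HH$ meeting $\R$, their doubles $S_1^*=S_1\cup\{\bar z:z\in S_1\}$ and $S_2^*$ are connected closed subsets of $\C$, and the family of curves in $\HH$ connecting $S_1$ to $S_2$ relates to the family of curves in $\C\sem(S_1^*\cup S_2^*)$ separating (or connecting) the two doubled sets; reflection doubles the extremal distance, so $d_\C(S_1^*,S_2^*) = \tfrac12 d_\HH(S_1,S_2)$ up to the precise bookkeeping one does with the reflection principle for extremal length. (2) Use the comparison/monotonicity principle for extremal length: $S_1^*$ is a connected set containing $z_1$ and a point at distance $\rad_{z_1}(S_1)$ from $z_1$, hence contains a crosscut-type connected continuum joining $z_1$ to the circle $|z-z_1|=\rad_{z_1}(S_1)$; $S_2^*$ is connected, unbounded, and contains $z_2$. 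After translating so $z_1=0$ and rescaling by $1/|z_1-z_2|$, we have two continua: one joining $0$ to a point of modulus $t:=\rad_{z_1}(S_1)/|z_1-z_2|$, the other joining a point of modulus $1$ to $\infty$. By the extremal-length monotonicity (shrinking $S_1^*$ to a segment from $0$ outward only decreases extremal distance, and likewise for $S_2^*$) the extremal distance between them is at least that of the Teichmüller configuration $\C\sem([-t,0]\cup[1,\infty))$ — wait, more carefully, at least that of the worst-case continua, which by Teichmüller's theorem is bounded below by a function of $t$ only. (3) Invoke Teichmüller's theorem / the modulus of the Teichmüller ring: the extremal distance between $[0,t]$ (or a continuum from $0$ to modulus $t$) and a continuum from modulus $1$ to $\infty$ is $\ge \frac1{2\pi}\log\frac1t - C_0$ for an absolute constant when $t\le 1$ (equivalently the capacity bound $\mathrm{cap} \ge$ something). (4) Combine: $\tfrac12 d_\HH(S_1,S_2) \ge \frac1{2\pi}\log(1/t) - C_0$, i.e. $d_\HH(S_1,S_2)\ge \frac1\pi\log(1/t) - 2C_0$, which rearranges to $t \le e^{2\pi C_0} e^{-\pi d_\HH(S_1,S_2)}$; choosing constants and verifying numerically that $e^{2\pi C_0}\le 32$ (or simply citing the sharp Teichmüller constant, which gives $16$, and being generous) yields $1\wedge t\le 32 e^{-\pi d_\HH(S_1,S_2)}$, where the $1\wedge$ handles the case $t>1$ for free since then the right side must also be $\ge 1$ or the inequality $d_\HH(S_1,S_2)$ small makes it vacuous — actually one must check: if $t\ge 1$ then we still need $32 e^{-\pi d_\HH}\ge 1$; but if $t\ge1$ the two continua are ``close'' and $d_\HH(S_1,S_2)$ is bounded above by an absolute constant, so $32 e^{-\pi d_\HH}\ge 1$ holds with room to spare. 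I would make that last point precise by a separate elementary extremal-length upper bound.

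The main obstacle I expect is the careful handling of the reflection step and the precise constant. Extremal length behaves cleanly under reflection only when the sets are symmetric and the curve families are chosen compatibly; one has to argue that a curve in $\HH$ from $S_1$ to $S_2$, together with its mirror image, forms (or contains) a curve in $\C$ from $S_1^*$ to $S_2^*$, and conversely control the doubling of extremal distance. The cleanest route is probably to avoid reflection and instead work directly in $\HH$: treat $\HH\sem(S_1\cup S_2)$ as a topological quadrilateral/triangle and compare with a half-plane Teichmüller configuration $\HH\sem([-1,0]\cup[\rho,\infty))$ whose extremal distance between the slits is explicitly $\frac1\pi\log\rho + O(1)$ by an explicit conformal map (a composition of a Möbius map and a square root sends it to an annular-type region). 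Then the monotonicity principle does all the work and the constant $32$ comes out of an honest but routine estimate of the $O(1)$ term, with the $1\wedge$ taking care of the degenerate regime. Getting the orientation of the inequality right in the monotonicity step — that replacing $S_1,S_2$ by these extremal slits can only \emph{decrease} the extremal distance — is the one place to be genuinely careful.
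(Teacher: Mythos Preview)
Your overall architecture---reflect across $\R$ to pass from $d_\HH$ to $d_\C$, then invoke Teichm\"uller's theorem---is exactly what the paper does. However, you have the direction of the key inequality reversed, and this is a genuine gap, not a cosmetic one.

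Teichm\"uller's theorem says that among all ring domains separating $\{0,z_0\}$ from $\{z_1,\infty\}$, the Teichm\"uller ring $\C\sem([-|z_0|,0]\cup[|z_1|,\infty))$ has the \emph{largest} modulus. Thus, after translating $z_1\mapsto 0$ and setting $r=\rad_{z_1}(S_1)$, $L=|z_1-z_2|$, $R=L/r$, one gets
\[
d_\C(S_1^{\doub},S_2^{\doub})\ \le\ d_\C([-r,0],[L,\infty))=\Lambda(R),
\]
an \emph{upper} bound on the extremal distance, hence $d_\HH(S_1,S_2)=2d_\C(S_1^{\doub},S_2^{\doub})\le 2\Lambda(R)$. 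Combined with the explicit estimate $e^{-2\pi\Lambda(R)}\ge \frac{1}{16(R+1)}$ (Ahlfors (4-21)), this yields $32e^{-\pi d_\HH}\ge \frac{2}{R+1}\ge 1\wedge \frac1R$, which is the claim. Your steps (2)--(4) instead assert a \emph{lower} bound $d_\C(S_1^{\doub},S_2^{\doub})\ge f(t)$; but then the algebra in step (4) does not give what you wrote: from $d_\HH\ge \frac1\pi\log(1/t)-2C_0$ one obtains $t\ge e^{-2\pi C_0}e^{-\pi d_\HH}$, the wrong direction. Two sign errors cancelled to produce the desired conclusion.

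Your ``monotonicity'' justification is also off on its own terms: shrinking a continuum enlarges the complementary ring and therefore \emph{increases} the modulus, not decreases it; and in any case the Teichm\"uller slits $[-r,0]$, $[L,\infty)$ need not be subsets of $S_1^{\doub}$, $S_2^{\doub}$, so no subset comparison is available. Teichm\"uller's theorem is the correct tool precisely because it is an extremal statement over all continua through the prescribed points, not a monotonicity statement. You flagged the orientation of the comparison as the ``one place to be genuinely careful,'' and that is exactly where the argument breaks.
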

\begin{proof}
  For $j=1,2$, let $S_j^{\doub}$ be the union of $S_j$ and its reflection about $\R$. By reflection
principle (\cite[Exercise 4-1]{Ahl}), $d_{\HH}(S_1,S_2)=2d_{\C}(S_1^{\doub},S_2^{\doub})$.  Let $r=\rad_{z_2}(S_1)$, $L=|z_1-z_2|$ and $R=L/r$. From Teichm\"uller Theorem (\cite[Theorem 4-7]{Ahl}),
$$ d_{\C}(S_1^{\doub},S_2^{\doub})\le d_{\C}([-r,0],[L,\infty))= d_{\C}([-1,0],[R,\infty))=\Lambda(R).$$
From \cite[Formula (4-21)]{Ahl}, we have
$$e^{-\pi d_{\HH}(S_1,S_2)}=e^{-2\pi d_{\C}(S_1^{\doub},S_2^{\doub})}\ge e^{-2\pi \Lambda(R)}\ge \frac 1{16(R+1)} .$$
Since $1\wedge \frac 1 R\le \frac 2{1+R}$, we get the conclusion.
\end{proof}

\begin{Proposition}
  Let $D$ be an $\HH$-domain and $S\subset \HH$. Suppose that there are $z_0\in\R$ and $r>0$ such that $\{|z-z_0|=r\}\cap D$ has a connected component $C_r$, which disconnect $S$ from $\infty$. In other words, $S$ lies in the bounded component of $D\sem C_r$. Let $g$ be a conformal map from $D$ onto $\HH$ such that $\lim_{z\to \infty} g(z)/z=1$. Then there is $w_0\in\R$ such that
  $$\rad_{w_0}(g(S))\le 4 \rad_{z_0}(S).$$ \label{lem-extremal2'}
\end{Proposition}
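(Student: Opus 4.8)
The plan is to prove Proposition \ref{lem-extremal2'} by combining the extremal‑distance estimate of Proposition \ref{lem-extremal2} with the conformal invariance of extremal length. First I would set up the geometry: since $C_r$ is a connected component of $\{|z-z_0|=r\}\cap D$ that disconnects $S$ from $\infty$ inside $D$, the closure of the bounded component of $D\sem C_r$ is a connected bounded set containing $S$; call it $S_1$, and let $S_2$ be (the closure of) the unbounded component together with $\partial D\cap\{|z-z_0|\ge r\}$, a connected unbounded set. Both $S_1$ and $S_2$ are subsets of $\lin\HH$ meeting $\R$ (since $D$ is an $\HH$‑domain, $\partial D\supset$ parts of $\R$), and they are separated in $D$ by the arc $C_r$. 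The key quantitative input is a lower bound on $d_{\HH}(S_1,S_2)$ (or rather $d_D(S_1,S_2)$, then transported): every curve in $D$ from $S_1$ to $S_2$ must cross the circular arc $C_r$, hence must cross the semi‑annulus $\{z\in\HH: r<|z-z_0|<\rho\}$ for $\rho$ as large as we like, at least out to where $S_2$ first reaches — but more simply, I would argue that any such curve contains a subcurve inside a semi‑annulus centered at $z_0$ of inner radius $r$ and outer radius comparable to $\operatorname{diam}(S_2\cap\text{relevant region})$; in the limit this forces $d_D(S_1,S_2)$ to be bounded below only in a way that is not directly what we want.

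So instead, the cleaner route — and the one I expect the authors take — is to push everything through $g$ first and then apply Proposition \ref{lem-extremal2} in reverse. Let $S_1':=g(S_1)$ and note $g(C_r)$ is a crosscut of $\HH$ separating $g(S)\subset S_1'$ from $\infty$; by conformal invariance, $d_{\HH}(g(S_1),g(S_2))=d_D(S_1,S_2)$. Now I claim $d_D(S_1,S_2)\ge d_D(C_r,\partial_{\text{out}})$ where the separating arc $C_r$ sits in the semi‑annulus $\{r<|z-z_0|<R_2\}\cap\HH$ for $R_2$ the radius at which $S_2$ re‑enters; using the comparison principle and the composition law with the fact that every connecting curve traverses this semi‑annulus, $d_D(S_1,S_2)\ge \frac1\pi\log(R_2/r)$. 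Letting $R_2\to\infty$ (since $S_2$ is unbounded and connected, for each large $R_2$ some connecting curve must cross the full annulus down to radius $r$... this needs care) would give $d_D(S_1,S_2)=\infty$, which is false — so the correct statement must be that $S_2$ is disconnected from the region near $z_0$ at a definite scale, and the estimate we actually want is an \emph{upper} bound on the radius of $g(S)$, obtained from an \emph{upper} bound on $d_{\HH}(S_1',S_2')$ via Teichm\"uller, not a lower bound.

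Reorganizing: the honest plan is to apply Proposition \ref{lem-extremal2} in the image. Set $\widehat S_1=g(S_1)$, a bounded connected subset of $\lin\HH$ meeting $\R$, and $\widehat S_2=g(S_2)$, unbounded connected meeting $\R$; pick $w_0\in \widehat S_1\cap\R$ and $w_2\in\widehat S_2\cap\R$. Proposition \ref{lem-extremal2} gives
\[
1\wedge\frac{\rad_{w_0}(\widehat S_1)}{|w_0-w_2|}\le 32\, e^{-\pi d_{\HH}(\widehat S_1,\widehat S_2)} = 32\, e^{-\pi d_D(S_1,S_2)}.
\]
Meanwhile, the separating arc $C_r\subset\{|z-z_0|=r\}$ lies between $S_1$ and $S_2$ inside $D$, so every curve in $D$ joining $S_1$ to $S_2$ crosses $C_r$ and hence crosses the semi‑annulus $\{z\in\HH: r<|z-z_0|<|z_0-x|\}$ for any $x\in S_2\cap\R$; choosing such a curve that also realizes the geometry one gets, via the comparison principle, $d_D(S_1,S_2)\ge \frac1\pi\log\frac{\dist(z_0,S_2)}{r}$, but since $S_1\supset S$ and $\rad_{z_0}(S)$ may be much smaller than $r$, I would rather bound $d_D(S_1,S_2)$ \emph{from below by $0$} trivially and instead bound the \emph{ratio} directly. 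Concretely: $\rad_{z_0}(S)$ versus $r$. If $\rad_{z_0}(S)\ge r/4$ the conclusion is immediate since by Koebe‑type distortion any conformal image has controlled radius; the substantive case is $\rad_{z_0}(S)<r/4$, and there one inserts an intermediate circle $\{|z-z_0|=r'\}$ with $\rad_{z_0}(S)\le r'\le r$, applies the composition law to split $d_D(S,S_2)$ across the two semi‑annuli, uses $d_D(S,\{|z-z_0|=r\})\ge \frac1\pi\log(r/r')$ and conformal invariance to translate into a bound of the form $\rad_{w_0}(g(S))\le 4\,e^{-\pi d}\cdot(\text{something})$, and finally chooses $w_0$ so that $g(S)$ together with the curve $g(C_r)$ pins down the ratio.

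The main obstacle I anticipate is the bookkeeping of \emph{which} set plays the role of the bounded piece and making the extremal‑length comparison quantitatively match the constant $4$: one must choose the reference point $w_0\in\R$ carefully (it will be a point of $g(C_r)\cap\R$ or of $g(\overline S)\cap\R$), verify that $g(C_r)$ really does disconnect $g(S)$ from $\infty$ in $\HH$ so that $g(S)$ lies in a bounded complementary component whose real diameter is controlled by $\rad_{w_0}(g(C_r))$, and then feed the pair $(g(\overline{S_1}), g(S_2))$ into Proposition \ref{lem-extremal2} while simultaneously lower‑bounding $d_D(S_1,S_2)$ by the modulus $\frac1\pi\log(r/\rad_{z_0}(S))$ of the semi‑annulus that every connecting curve must cross. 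Matching these two inequalities, together with the elementary fact $1\wedge x^{-1}\le 2/(1+x)$ used in the proof of Proposition \ref{lem-extremal2}, should yield $\rad_{w_0}(g(S))\le 4\,\rad_{z_0}(S)$ after absorbing the numerical constant $32$ into the exponential factor $e^{-\pi d}$ when $d$ is large, and handling small $d$ (i.e.\ $\rad_{z_0}(S)$ comparable to $r$) by the trivial distortion bound. I would double‑check the constant by running the semi‑annulus modulus computation $d_\Omega(X,Y)=\frac1\pi\log(R_2/R_1)$ against the $32$ in Proposition \ref{lem-extremal2}, since $32 = 2\cdot 16$ and $\log 4$ versus these constants is exactly what determines whether $4$ is the sharp multiplicative loss.
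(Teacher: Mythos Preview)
Your proposal never reaches a workable argument, and the reason is that you are missing the paper's key device: an \emph{auxiliary large circle} $C_R=\{z\in\HH:|z-z_0|=R\}$ and a limit $R\to\infty$. You repeatedly try to pair the bounded piece (containing $S$) against an \emph{unbounded} opposing set $S_2$ and then invoke Proposition~\ref{lem-extremal2}; but as you yourself notice, the extremal distance from $C_r$ (or $S_1$) to an unbounded set through $D$ can be made arbitrarily large by the semi-annulus bound, so Proposition~\ref{lem-extremal2} only yields $0$ on the right side and no information about $\rad_{w_0}(g(S))$. Your final case split ($\rad_{z_0}(S)$ small versus comparable to $r$) with an appeal to ``Koebe distortion'' does not repair this: Koebe distortion controls local dilatation at an interior point, not the diameter of the image of an arc that touches $\partial D$, which is what is actually needed to control $\rad_{w_0}(g(C_r))$.

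The paper's proof proceeds as follows. It takes $w_0$ to be a real endpoint of the crosscut $g(C_r)$ and reduces to showing $\rad_{w_0}(g(C_r))\le 4r$ (note: the ``$\rad_{z_0}(S)$'' in the statement should be read as $r$; the proof and the only application of the proposition both use $r$). For each large $R$ one has the lower bound $d_D(C_r,C_R)\ge \tfrac1\pi\log(R/r)$ from the semi-annulus. On the image side, Proposition~\ref{small} gives $\rad_{w_0}(g(C_R))\le R':=R+L+L_0$ with $L=\rad_{z_0}(\HH\sem D)$ and $L_0=|z_0-w_0|$. Then, instead of Proposition~\ref{lem-extremal2}, the paper applies the \emph{Gr\"otzsch module} estimate: by reflection and Teichm\"uller,
\[
d_{\HH}(g(C_r),g(C_R))\le 2\,d_{\C}\bigl(g(C_r)^{\doub},\{|z-w_0|=R'\}\bigr)\le 2M(R'/r')=\Lambda\bigl((R'/r')^2-1\bigr)\le \tfrac1\pi\log\bigl(4R'/r'\bigr),
\]
where $r'=\rad_{w_0}(g(C_r))$. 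Combining the two bounds and letting $R\to\infty$ (so $R'/R\to 1$) gives $r'\le 4r$. The constant $4$ comes from $\Lambda(x)\le\tfrac1{2\pi}\log(16(x+1))$, not from the $32$ of Proposition~\ref{lem-extremal2}; your attempt to track the constant through Proposition~\ref{lem-extremal2} was therefore aimed at the wrong inequality. If you want to salvage your approach, replace your unbounded $S_2$ by $C_R$, use the Gr\"otzsch bound (slit versus circle) rather than the Teichm\"uller bound (slit versus slit) in the image, and take $R\to\infty$.
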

\begin{proof}
Since $C_r$ is a crosscut of $D$, and $S$ lies in the bounded component of $D\sem C_r$, $g(C_r)$ is a crosscut of $g(D)=\HH$, and $g(S)$ lies in the bounded component of $\HH\sem g(C_r)$. Let $w_0$ be one endpoint of $g(C_r)$. It suffices to show that $\rad_{w_0}(g(C_r))\le 4 r$.   Let $L=\rad_{z_0}(K)$ and $L_0=|z_0-w_0|$. Take a big number $R>r+L+L'$ and let $C_R=\{z\in\HH:|z-z_0|=R\}$. Then $S$ and $C_R$ can be separated by the semi-annulus $\{z\in\HH:r<|z-x_0|<R\}$ in $D$. By the comparison principle and conformal invariance of extremal length,
  $$d_{\HH}(g(C_r),g(C_R))=d_{D}(C_r,C_R)\ge \frac 1\pi \log(R/r).$$
  Let $K$ be the $\HH$-hull $\HH\sem D$. Then $g-g_K$ is a real constant. So we may assume that $g=g_K$.

By Proposition \ref{small} again, $g(C)$ is a crosscut of $\HH$ with  $\rad_{w_0}(g(C))\le R+L+L_0$.
 Let $r'=\rad_{w_0}(g(C_r))$ and $R'=R+L+L_0$. By comparison principle, reflection principle and Teichm\"uller Theorem,
 $$d_{\HH}(g(C_r),g(C_R))\le d_{\HH}(g(C_r),\{z\in\HH:|z-w_0|=R'\})$$
 $$=2 d_{\C}(g(C_r)^{\doub},\{|z-w_0|=R'\})\le 2 d_{\C}([-1,0], \{|z|=R'/{r'}\}=2M(R'/r').$$
 By \cite[Formula 4-14]{Ahl}, $2M(R'/r')= \Lambda((R'/r')^2-1)$. Thus, by the above displayed formulas and \cite[Formula (4-21)]{Ahl}
 $$\frac 1\pi \log(R/r)\le \Lambda((R'/r')^2-1)\le \frac 1{2\pi} \log (16(R'/r')^2)=\frac 1\pi (4(R'/r')).$$
 So we get $r'\le  4(R'/R) r$. Letting $R\to \infty$, we get $R'/R\to 1$. So $r'\le 4r$.
\end{proof}

\subsection{Two-sided Chordal SLE}
Suppose $\gamma$  is a chordal SLE$_\kappa$ curve in $\HH$ from $0$ to $\infty$, which generates the filtration $\F=(\F_t)_{t\ge 0}$. Let $\PP$ denote the law of $\gamma$, and $\EE$ denote the
corresponding expectation.
Let $z\in\R\sem \{0\}$. By (\ref{chordal}) and the fact that $U_t=\sqrt\kappa B_t$ for some standard Brownian motion $B_t$, up to $\tau^*_z$, $Z_t(z)$ and $g_t'(z)$ satisfy the following SDE and ODE:
\begin{align*}
 d Z_t(z)&=-\sqrt\kappa dB_t+\frac 2{Z_t(z)} \,dt;\\
\frac{ d g_t'(z)}{g_t'(z)}&=\frac{-2}{Z_t(z)^2}\,dt.
\end{align*}
By It\^o's formula (cf.\ \cite{RY}), we get the following continuous positive local martingale:
\BGE M_t(z):=\frac{|g_t'(z)|^\alpha |z|^\alpha}{|Z_t(z)|^\alpha}, \quad 0\le t<\tau^*_z,\label{M}\EDE
which satisfies the SDE:
\BGE \frac{dM_t(z)}{M_t(z)}=\frac{\kappa-8}{\sqrt\kappa} \frac{dB_t}{Z_t(z)}.\label{dM}\EDE
By Girsanov Theorem (cf.\ \cite{RY}), if we tilt the law $\PP$ by the local martingale $M_\cdot(z)$, we get a new random curve $\til\gamma$, whose driving function $\til U$ satisfies the SDE:
$$d\til U_t=\sqrt\kappa d\til B_t +\frac{\kappa-8}{\til Z_t(z)}\,dt,$$
where $\til B$ is another standard Brownian motion, and $\til Z_t$'s are the centered Loewner maps associated with $\til\gamma$. In fact, such $\til \gamma$ is a chordal SLE$_\kappa(\kappa-8)$ curve (cf.\ \cite{LSW-8/3}) in $\HH$ started from $0$, aimed at $\infty$, with the force point located at $z$. Since $\kappa-8<\frac\kappa 2-4$, with probability $1$, $\til\gamma$ ends at $z$ (cf.\ \cite{MS1}).

The above curve $\til\gamma$ from $0$ to $z$ is the first arm of a  two-sided chordal SLE$_\kappa$ curve in $\HH$ from $0$ to $\infty$ passing through $z$. Given this arm $\til\gamma$, the rest of the two-sided chordal SLE$_\kappa$ curve is a chordal SLE$_\kappa$ curve from $z$ to $\infty$   in the unbounded connected component of  $\HH\sem\til\gamma$. We use $\PP^*_z$ to denote the law of such a two-sided chordal SLE$_\kappa$ curve, and let  $\EE^*_z$ denote the corresponding expectation.
For $r>0$, we use $\PP^r_z$ to denote the conditional law $\PP[\cdot|\tau^z_r<\infty]$, i.e., the law of a chordal SLE$_\kappa$ curve in $\HH$ from $0$ to $\infty$ conditioned to visit the disk with radius $r$ centered at $z$; and let $\EE^r_z$ denote the corresponding expectation.

\begin{Proposition}
	Let $z\in\R\sem \{0\}$ and $R\in(0,|z|)$. Then $\PP_z^*$ is absolutely continuous w.r.t.\ $\PP_z^R$ on $\F_{\tau^z_R}\cap\{\tau^z_R<\infty\}$, and the Radon-Nikodym derivative is uniformly bounded by some constant $C_\kappa\in[1,\infty)$ depending only on $\kappa$. \label{RN<1}
\end{Proposition}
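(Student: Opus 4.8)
The plan is to write both $\PP^*_z$ and $\PP^R_z$ as densities against $\PP$ on the trace $\sigma$-algebra $\F_{\tau^z_R}\cap\{\tau^z_R<\infty\}$, and to check that these two densities differ only by a factor bounded by a constant depending on $\kappa$. Put $\tau=\tau^z_R$. As a preliminary I would record that, since $R>0$, a.s.\ on $\{\tau<\infty\}$ one has $\tau<\tau^*_z$ (the trace attains distance $R$ from $z$ strictly before $z$ is swallowed by the hull); in particular $z\notin\lin{K_\tau}$ there, and $g_\tau'(z)$, $Z_\tau(z)=g_\tau(z)-U_\tau$, and $M_\tau(z)$ from (\ref{M}) are well defined.

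Next I would identify the two densities with respect to $\PP$. Since $\PP^R_z=\PP[\,\cdot\mid\tau<\infty\,]$, for $A\in\F_\tau$ we have $\PP^R_z(A\cap\{\tau<\infty\})=\PP(A\cap\{\tau<\infty\})/\PP[\tau<\infty]$, so on $\F_\tau\cap\{\tau<\infty\}$ the density $d\PP^R_z/d\PP$ equals the constant $\PP[\tau<\infty]^{-1}$. For $\PP^*_z$, recall that the first arm of the two-sided curve is $\PP$ weighted by the local martingale $M_\cdot(z)$ and is an SLE$_\kappa(\kappa-8)$ curve terminating at $z$; since $M_0(z)=1$ and, on $\{\tau<\infty\}$, $\tau<\tau^*_z\le T_z$ (so the two-sided curve agrees with its first arm up to time $\tau$), we get $d\PP^*_z/d\PP=M_\tau(z)$ on $\F_\tau\cap\{\tau<\infty\}$. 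Dividing, $d\PP^*_z/d\PP^R_z=\PP[\tau<\infty]\,M_\tau(z)$ there, and it remains to bound this product by a constant depending only on $\kappa$.

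For the first factor, $\{\tau<\infty\}=\{\dist(z,\gamma)\le R\}$, so by $R<|z|$ and the one-point estimates (\ref{G(z)}) and (\ref{G(z)-approx}), $\PP[\tau<\infty]\le G(z)R^\alpha+C(R/|z|)^{\alpha+\beta_1}\le(\ha c+C)(R/|z|)^\alpha$. For the second factor I would argue pathwise on $\{\tau<\infty\}$. Write $z=x\in\R$, so $M_\tau(z)=\big(g_\tau'(x)\,|x|/|Z_\tau(x)|\big)^\alpha$. By Corollary \ref{UinSK}, $U_\tau\in S_{K_\tau}$, hence $|Z_\tau(x)|=|g_\tau(x)-U_\tau|\ge\dist(g_\tau(x),S_{K_\tau})$; applying the Koebe $\tfrac14$-theorem to the conformal map $g_\tau\colon\C\sem K_\tau^{\doub}\to\C\sem S_{K_\tau}$ at $x$ gives $\dist(g_\tau(x),S_{K_\tau})\ge\tfrac14\,g_\tau'(x)\,\dist(x,K_\tau^{\doub})=\tfrac14\,g_\tau'(x)\,\dist(x,\lin{K_\tau})$. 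Finally I would check that $\dist(x,\lin{K_\tau})=\dist(x,\gamma[0,\tau])=R$: the boundary of a bounded complementary component of $\gamma[0,\tau]$ is made of arcs of $\gamma[0,\tau]$ together with real intervals whose endpoints lie on $\gamma[0,\tau]$, so (as $x\notin\lin{K_\tau}$) no point of such a component lies closer to $x$ than $\dist(x,\gamma[0,\tau])$, and the latter equals $R$ by the definition of $\tau$. Hence $g_\tau'(x)/|Z_\tau(x)|\le 4/R$ and $M_\tau(z)\le 4^\alpha(|z|/R)^\alpha$.

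Combining, $d\PP^*_z/d\PP^R_z\le 4^\alpha(\ha c+C)$ on $\F_\tau\cap\{\tau<\infty\}$, so the proposition holds with $C_\kappa:=1\vee\big(4^\alpha(\ha c+C)\big)$; the bound $C_\kappa\ge1$ is consistent (indeed forced) since $\PP^*_z$ and $\PP^R_z$ are both probability measures on $\{\tau<\infty\}$. I expect the only substantive step to be the pathwise inequality $M_\tau(z)\lesssim(|z|/R)^\alpha$ — namely the Koebe-type lower bound on $|Z_\tau(x)|$ and the geometric fact that the pockets of the hull never reach closer to the boundary point $x$ than the trace does — together with the routine verification that $\tau^z_R<\tau^*_z$ a.s.\ on $\{\tau^z_R<\infty\}$.
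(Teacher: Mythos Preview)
Your proof is correct and follows essentially the same approach as the paper: both compute $d\PP^*_z/d\PP^R_z = M_\tau(z)\cdot\PP[\tau<\infty]$ on $\F_\tau\cap\{\tau<\infty\}$, bound $\PP[\tau<\infty]\lesssim (R/|z|)^\alpha$, and then use Koebe's $1/4$ theorem together with $\dist(z,\lin{K_\tau})=R$ to get $|g_\tau'(z)|R\lesssim |Z_\tau(z)|$. The only cosmetic differences are that the paper cites (\ref{1pt}) rather than (\ref{G(z)-approx}) for the probability bound, and applies Koebe on the simply connected domain $\C\sem(K_\tau^{\doub}\cup(-\infty,0])$ (mapped by $g_\tau$ onto $\C\sem(-\infty,d_{K_\tau}]$, using $g_\tau(z)>d_{K_\tau}\ge U_\tau$) whereas you apply Koebe to the disk $B(x,\dist(x,K_\tau^{\doub}))$ directly and invoke $U_\tau\in S_{K_\tau}$; both routes yield the same inequality $g_\tau'(x)/|Z_\tau(x)|\le 4/R$.
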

\begin{proof}
By symmetry we may assume $z>0$.  Let $\tau=\tau^z_R$.
By the construction of $\PP_z^*$ (through tilting $\PP$ by $M_\cdot(z)$), we have
$$ \frac{d\PP_z^*|{\F_{\tau }\cap\{\tau <\infty\}}}{d\PP |{\F_{\tau }\cap\{\tau <\infty\}}}= {M_\tau(z)} .$$ 
By the definition of $\PP_z^R$,
$$ \frac{d\PP_z^R|{\F_{\tau }\cap\{\tau <\infty\}}}{d\PP |{\F_{\tau }\cap\{\tau <\infty\}}}=\frac{1}{\PP[\tau <\infty]}.$$ 
Thus, it suffices to prove that $M_\tau(z ) \cdot \PP[\tau <\infty]$ is uniformly bounded. By (\ref{1pt}), $\PP[\tau <\infty]\lesssim (R/|z|)^\alpha$. Since $g_\tau$ maps the simply connected domain $\Omega:=\C\sem (K_\tau^{\doub}\cup (-\infty,0])$ conformally onto $\C\sem (-\infty, b_{K_\tau}]$, by Koebe's $1/4$ theorem,
\BGE |g_\tau'(z)|\cdot R=|g_\tau'(z)|\cdot \dist(z,\pa \Omega)\asymp \dist(g_\tau(z),\pa(\C\sem  (-\infty, b_{K_\tau}]))=|g_\tau(z)-b_{K_\tau}|\le Z_t(z),\label{gz-Z}\EDE
where in the last step we used $g_\tau(z)>b_{K_\tau}\ge U_\tau$. Thus,
$$M_\tau(z)\cdot \PP[\tau <\infty]\lesssim \frac{|g_t'(z)|^\alpha |z|^\alpha}{|Z_t(z)|^\alpha}\frac{R^\alpha}{|z|^\alpha} =  \frac{|g_t'(z)|^\alpha R|^\alpha}{|Z_t(z)|^\alpha}\lesssim 1.$$
\end{proof}

\begin{Proposition}
  Let $z\in\R\sem \{0\}$ and $0<r<\eta<|z|$. Then $\PP^r_z$ restricted to $\F_{\tau_\eta^z}$ is absolutely continuous with respect to $\PP^*_z$, and there is a constant $ \beta>0$ depending only on $\kappa$ such that
\BGE \Big|\log\Big(\frac{d\PP^r_z|\F_{\tau_\eta^z}}{d\PP^*_z|\F_{\tau_\eta^z}}\Big) \Big| \lesssim \Big(\frac r \eta\Big)^{\beta},\quad \mbox{if }r/\eta<1/6.
\label{Prop2.13-eqn}\EDE
  \label{Prop2.13}
\end{Proposition}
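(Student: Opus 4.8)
The plan is to write the Radon-Nikodym derivative explicitly and then control it using the one-point boundary Green's function (\ref{G(z)}), its convergence rate (\ref{G(z)-approx}), and the domain Markov property (DMP). By left-right symmetry we may assume $z>0$. Since $r<\eta$ we have $\{\tau^z_r<\infty\}\subset\{\tau^z_\eta<\infty\}$, the law $\PP^r_z$ is supported on $\{\tau^z_r<\infty\}$, and $\PP^*_z$ is supported on $\{\tau^z_\eta<\infty\}$ because the first arm of the two-sided SLE passes through $z$; on the support of either law one also has $\tau^z_\eta<T_z$, as a hull separating $z$ from $\infty$ by time $\tau^z_\eta$ would keep $\gamma$ at distance more than $\eta>r$ from $z$ forever. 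Hence, exactly as in the proof of Proposition \ref{RN<1}, the construction of $\PP^*_z$ (whose first arm is $\PP$ tilted by the local martingale $M_\cdot(z)$) and the definition of $\PP^r_z$ give
\BGEN
\frac{d\PP^*_z|_{\F_{\tau^z_\eta}}}{d\PP|_{\F_{\tau^z_\eta}}}=M_{\tau^z_\eta}(z),\qquad \frac{d\PP^r_z|_{\F_{\tau^z_\eta}}}{d\PP|_{\F_{\tau^z_\eta}}}=\frac{\PP[\tau^z_r<\infty\mid\F_{\tau^z_\eta}]}{\PP[\tau^z_r<\infty]},
\EDEN
and therefore, since $M_{\tau^z_\eta}(z)>0$,
\BGEN
\frac{d\PP^r_z|_{\F_{\tau^z_\eta}}}{d\PP^*_z|_{\F_{\tau^z_\eta}}}=\frac{\PP[\tau^z_r<\infty\mid\F_{\tau^z_\eta}]}{\PP[\tau^z_r<\infty]\,M_{\tau^z_\eta}(z)}.
\EDEN
In particular $\PP^r_z\ll\PP^*_z$ on $\F_{\tau^z_\eta}$, and it remains to bound the logarithm of this ratio by a multiple of $(r/\eta)^\beta$ for $r/\eta<1/6$.

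For the denominator, (\ref{G(z)}) and (\ref{G(z)-approx}) give $\PP[\tau^z_r<\infty]=\ha c\,(r/|z|)^\alpha\big(1+O((r/|z|)^{\beta_1})\big)$, and $(r/|z|)^{\beta_1}\le(r/\eta)^{\beta_1}$ since $\eta<|z|$. For the numerator we use the DMP: conditionally on $\F_{\tau^z_\eta}$, $\gamma(\tau^z_\eta+\cdot)=Z_{\tau^z_\eta}^{-1}(\til\gamma(\cdot))$ for an independent chordal SLE$_\kappa$ trace $\til\gamma$, so $\PP[\tau^z_r<\infty\mid\F_{\tau^z_\eta}]$ is the probability that $\til\gamma$ meets $Z_{\tau^z_\eta}(\{w\in\HH:|w-z|\le r\})$. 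The geometric input here is that, on $\{\tau^z_r<\infty\}$, $\dist(z,K_{\tau^z_\eta}^{\doub})\ge\eta$: indeed $\dist(z,\gamma[0,\tau^z_\eta])=\eta$ by definition of $\tau^z_\eta$, and a curve issuing from $0<z$ that stays at distance $\ge\eta$ from $z$ cannot reach $\R$ to the right of $z$ without separating $z$ from $\infty$ (which is ruled out on $\{\tau^z_r<\infty\}$), so $b_{K_{\tau^z_\eta}}\le z-\eta$ and no part of $K_{\tau^z_\eta}^{\doub}$ comes within $\eta$ of $z$; this can be made rigorous by a separation argument in the spirit of Propositions \ref{lem-extremal2} and \ref{lem-extremal2'}. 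Consequently $Z_{\tau^z_\eta}=g_{\tau^z_\eta}-U_{\tau^z_\eta}$ is conformal and univalent on $\{|w-z|<\eta\}$, so by the Koebe distortion theorem $Z_{\tau^z_\eta}(\{w\in\HH:|w-z|\le r\})$ is squeezed between the half-disks in $\HH$ around $w_0:=Z_{\tau^z_\eta}(z)$ of radii $r\,|g_{\tau^z_\eta}'(z)|\,(1\pm O(r/\eta))$, and by the Koebe $1/4$ theorem (as in the proof of Proposition \ref{RN<1}) $|w_0|\gtrsim|g_{\tau^z_\eta}'(z)|\,\eta$. Applying (\ref{G(z)-approx}) in $\HH$ with reference point $w_0$ and radius of order $r\,|g_{\tau^z_\eta}'(z)|$ --- legitimate since $r\,|g_{\tau^z_\eta}'(z)|/|w_0|\lesssim r/\eta$, which is below $1$ once $r/\eta$ is small --- and using monotonicity of the visiting probability in the target set, we obtain
\BGEN
\PP[\tau^z_r<\infty\mid\F_{\tau^z_\eta}]=\ha c\,\frac{(r\,|g_{\tau^z_\eta}'(z)|)^\alpha}{|Z_{\tau^z_\eta}(z)|^\alpha}\,\Big(1+O\big((r/\eta)^{\beta_1\wedge1}\big)\Big),
\EDEN
the error combining the $O((r/\eta)^{\beta_1})$ from (\ref{G(z)-approx}) with the $O(r/\eta)$ from the distortion.

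Inserting this and the denominator estimate, together with $M_{\tau^z_\eta}(z)=|g_{\tau^z_\eta}'(z)|^\alpha|z|^\alpha/|Z_{\tau^z_\eta}(z)|^\alpha$ from (\ref{M}), into the displayed ratio, the factors $\ha c$, $r^\alpha$, $|g_{\tau^z_\eta}'(z)|^\alpha$, $|z|^\alpha$ and $|Z_{\tau^z_\eta}(z)|^{-\alpha}$ all cancel, leaving
\BGEN
\frac{d\PP^r_z|_{\F_{\tau^z_\eta}}}{d\PP^*_z|_{\F_{\tau^z_\eta}}}=\frac{1+O((r/\eta)^{\beta_1\wedge1})}{1+O((r/\eta)^{\beta_1})}=1+O\big((r/\eta)^{\beta_1\wedge1}\big),
\EDEN
so taking logarithms gives (\ref{Prop2.13-eqn}) with $\beta=\beta_1\wedge1$, at least once $r/\eta$ is below a fixed threshold; for $r/\eta$ between that threshold and $1/6$ the claimed bound merely asserts that the ratio is bounded above and below by positive constants, which follows from the same computation (or from Proposition \ref{RN<1}). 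The step I expect to be the main obstacle is the geometric lemma invoked above --- that on $\{\tau^z_r<\infty\}$ the hull $K_{\tau^z_\eta}$ stays at distance at least $\eta$ from $z$: one must exclude the formation of a pocket reaching close to $z$, the point being that such a pocket would separate $z$ from $\infty$ and hence force $\tau^z_r=\infty$; once this is in hand the conformal distortion of $g_{\tau^z_\eta}$ near $z$ is routine Koebe theory, and the remaining care lies in checking that the distortion error together with the two errors from (\ref{G(z)-approx}) really assemble into a single power of $r/\eta$.
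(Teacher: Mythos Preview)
Your approach is correct and is essentially the same as the paper's: write the Radon--Nikodym derivative as
\[
\frac{\PP[\tau^z_r<\infty\mid\F_{\tau^z_\eta}]/(G_{\tau^z_\eta}(z)r^\alpha)}{\PP[\tau^z_r<\infty]/(G(z)r^\alpha)}
\]
(equivalently, your $\PP[\tau^z_r<\infty\mid\F_{\tau^z_\eta}]/(\PP[\tau^z_r<\infty]\,M_{\tau^z_\eta}(z))$), and control numerator and denominator separately using (\ref{G(z)-approx}) together with DMP and Koebe distortion. The paper compresses all of this into the single sentence ``By (\ref{G(z)-approx}) and Koebe's distortion theorem\ldots''; your expansion of that sentence is accurate.

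One remark on the geometric lemma you flag as the ``main obstacle'': the claim $\dist(z,K_{\tau^z_\eta}^{\doub})\ge\eta$ on $\{\tau^z_\eta<\tau^*_z\}$ is indeed true, but the references to Propositions \ref{lem-extremal2} and \ref{lem-extremal2'} are a red herring --- no extremal-length argument is needed. The direct proof is elementary: any $w\in K_{\tau^z_\eta}$ lies either on $\gamma[0,\tau^z_\eta]$ (hence at distance $\ge\eta$ from $z$) or in a bounded component $D$ of $\HH\sem\gamma[0,\tau^z_\eta]$; in the latter case the straight segment from $w$ to $z$ must leave $D$ through $\partial D\cap\HH\subset\gamma$ (it cannot leave through $\R$, since $z\notin\lin D$ when $\tau^*_z>\tau^z_\eta$), and the crossing point is on $\gamma$ at distance $<|w-z|$ from $z$. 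Your argument that $b_{K_{\tau^z_\eta}}\le z-\eta$ handles the real part $[a_K,b_K]$ of $K^{\doub}$ in the same spirit. With this in hand, Koebe distortion on the disc of radius $\eta$ about $z$ gives exactly the control you need, and the rest of your write-up goes through.
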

\begin{proof}
Recall the $G(z)=\ha c |z|^{-\alpha}$ defined by (\ref{G(z)}). Define $G_t(z)=|Z_t'(z)|^\alpha Z_t(z)$ if $\tau^*_z>t$; and $G_t(z)=0$ if $\tau^*_z\le t$. Then
$$\frac{d\PP^*_z|\F_{\tau_\eta^z}}{d\PP|\F_{\tau_\eta^z}}=M_{\tau^z_\eta}(z)=\frac{G_{\tau^z_\eta}(z)}{G(z)}.$$
By the definition of $\PP^r_z$, we have
$$\frac{d\PP^r_z|\F_{\tau_\eta^z}}{d\PP|\F_{\tau_\eta^z}}=\frac{\PP[\tau^z_r<\infty|\F_{\tau^\eta_z}]}{\PP[\tau^z_r<\infty]}.$$
Since $\PP[\tau^z_r<\infty|\F_{\tau^\eta_z}]=0$ implies that $\tau^\eta_z\le \tau^*_z$,  which in turn implies that $G_{\tau^z_\eta}(z)=0$, by the above two displayed formulas, $\PP^r_z$ restricted to $\F_{\tau_\eta^z}$ is absolutely continuous with respect to $\PP^*_z$, and
$$\frac{d\PP^r_z|\F_{\tau_\eta^z}}{d\PP^*_z|\F_{\tau_\eta^z}} =\frac{\PP[\tau^z_r<\infty|\F_{\tau^\eta_z}]/(G_{\tau^z_\eta}(z)r^\alpha)}{\PP[\tau^z_r<\infty]/(G(z)r^\alpha)}.$$
By (\ref{G(z)-approx}) and Koebe's distortion theorem, there are constants $\beta,\delta>0$ such that, if $r/\eta<1/6$, then
$$\log(\PP[\tau^z_r<\infty]/(G(z)r^\alpha))\lesssim (r/|z|)^{\beta},\quad \log(\PP[\tau^z_r<\infty|\F_{\tau^\eta_z}]/(G_{\tau^z_\eta}(z)r^\alpha))\lesssim (r/\eta)^{\beta}.$$
The above two displayed formulas together imply (\ref{Prop2.13-eqn}).
\end{proof}

  \section{Main Estimates} \label{mainestimates}
In this section, we will provide some useful estimates for the proof of the main theorem. We use the notion and symbols in the previous section.
We now define the function $F(z_1,\dots,z_n)$ that appeared in Theorem \ref{Main-thm}.


From now on, let $d_0=1+\frac \kappa 8$ and $\alpha=\frac 8\kappa -1$.
For $y\ge 0$, define $P_y$ on $[0,\infty)$ by
$$P_y(x)=\left\{
\begin{array}{ll}
y^{\alpha-(2-d_0)} x^{2-d_0},&x\le y;\\
x^\alpha,& x\ge y.
\end{array}
\right.
$$

For an (ordered) set of distinct points $z_1,\dots,z_n\in\lin\HH\sem \{0\}$, we let $z_0=0$ and define
\BGE y_k=\Imm z_k,\quad l_k=\min_{0\le j\le k-1}\{|z_k-z_j|\},\quad R_k=\min_{0\le j\le n,j\ne k}\{|z_k-z_j|\},\quad 1\le k\le n.\label{lR}\EDE
Note that we have $R_k\le l_k$. 
For $r_1,\dots,r_n>0$, define
\BGE F(z_1,\dots,z_n;r_1,\dots,r_n)=\prod_{k=1}^n \frac{P_{y_k}(r_k)}{P_{y_k}(l_k)}.\label{Fzr}\EDE
The following is \cite[Formula (2.7)]{existence}. For any permutation $\sigma$ of $\{1,\dots,n\}$,
\BGE F(z_1,\dots,z_n;r_1,\dots,r_n)\asymp F(z_{\sigma(1)},\dots,z_{\sigma(n)};r_{\sigma(1)},\dots,r_{\sigma(n)}).\label{perm-Fzr}\EDE

The following proposition combines \cite[Theorem 1.1]{higher} (which gives the upper bound) and \cite[Theorem 4.3]{existence} (which gives the lower bound).

\begin{Proposition}
Let $z_1,\dots,z_n$ be distinct points on $\lin\HH\sem \{0\}$. Let $R_1,\dots,R_n$ be defined by (\ref{lR}). Let $r_j>0$, $1\le j\le n$. Then for a chordal SLE$_\kappa$ curve $\gamma$ in $\HH$ from $0$ to $\infty$, we have
\begin{itemize}
  \item $\PP[\tau^{z_j}_{r_j}<\infty,1\le j\le n]\lesssim \prod_{j=1}^n (1\wedge  \frac{P_{y_j}(  r_j)}{P_{y_j}(l_j)} )$;
  \item $\PP[\tau^{z_j}_{r_j}<\infty,1\le j\le n]\gtrsim F(z_1,\dots,z_n;r_1,\dots r_n)$, if $r_j\le R_j$, $1\le j\le n$.
\end{itemize}
\label{lower-upper}
\end{Proposition}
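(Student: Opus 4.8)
The plan is to prove the two bounds of Proposition~\ref{lower-upper} essentially by citation plus a dictionary: both are known results in the interior-point setting (\cite[Theorem 1.1]{higher} for the upper bound, \cite[Theorem 4.3]{existence} for the lower bound), and what remains is to check that their hypotheses and conclusions transfer to the case where some or all of the $z_k$ lie on $\R\sem\{0\}$. So the first thing I would do is recall the statements of those two theorems in the form stated there, paying attention to whether they are stated for points in $\HH$ or in $\lin\HH$, and note that the quantities $y_k=\Imm z_k$, $l_k$, $R_k$, and the profile functions $P_y$ are defined identically here in (\ref{lR})--(\ref{Fzr}) as there, with $y_k=0$ now permitted. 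In particular, when $y_k=0$ the function $P_0$ is simply $P_0(x)=x^\alpha$ on all of $[0,\infty)$, which is the ``boundary'' profile, and one checks that $F$ and the product $\prod_j(1\wedge P_{y_j}(r_j)/P_{y_j}(l_j))$ are jointly continuous in the $z_k$ up to the boundary (away from collisions), so the boundary case is a genuine limiting case of the interior one.

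For the upper bound, the key step is to observe that \cite[Theorem 1.1]{higher} is in fact proved for all distinct $z_1,\dots,z_n\in\lin\HH\sem\{0\}$ — its proof uses only the one-point estimate $\PP[\tau^{z_j}_{r_j}<\infty]\lesssim 1\wedge P_{y_j}(r_j)/P_{y_j}(l_j)$ (which holds in the boundary case by (\ref{G(z)}), (\ref{G(z)-approx}), giving the $x^\alpha$ behaviour that matches $P_0$), together with the Domain Markov Property and conformal-distortion bounds from Propositions~\ref{small} and~\ref{SKSH}, none of which distinguish interior from boundary points. So I would either quote \cite[Theorem 1.1]{higher} directly if it is stated for $\lin\HH$, or else indicate that its proof goes through verbatim, replacing the interior one-point Green's function bound by the boundary one and using that $P_{y}(x)\to P_0(x)=x^\alpha$ as $y\to0$. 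For the lower bound, \cite[Theorem 4.3]{existence} is the analogous statement in the interior setting; the same remark applies, since its proof builds the event $\{\tau^{z_j}_{r_j}<\infty,\ 1\le j\le n\}$ out of a concatenation of forced crossings whose probabilities are estimated via the one-point lower bound and the rescaling/DMP structure, all of which have boundary analogues already recorded (the one-point boundary lower bound is contained in (\ref{G(z)-approx})). The hypothesis $r_j\le R_j$ is exactly the same as in the interior statement.

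The main obstacle — and the only real content beyond bookkeeping — is justifying the claim that the interior proofs ``go through verbatim'' at the boundary, i.e.\ that no step secretly uses $y_k>0$ in an essential way (for instance through an estimate that blows up or degenerates as $\Imm z_k\to0$). Concretely I would check: (i) that the two-sided SLE construction of Section~2.6 and the Radon--Nikodym comparisons of Propositions~\ref{RN<1} and~\ref{Prop2.13} are stated for $z\in\R\sem\{0\}$, which they are, so the boundary one-point machinery is fully in place; (ii) that the iterative/multiscale decomposition in \cite{higher,existence} only ever applies the one-point estimates and the extremal-length separation lemmas (Propositions~\ref{lem-extremal2}, \ref{lem-extremal2'}), which are already formulated for subsets of $\lin\HH$ meeting $\R$; and (iii) that the combinatorial identity (\ref{perm-Fzr}) and the relation $R_k\le l_k$ hold with $y_k=0$ allowed. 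Once these checks are in place, the proposition follows. I expect the write-up to be short: state the two cited theorems, remark that the excerpt's definitions of $P_y$, $l_k$, $R_k$, $F$ coincide with theirs and remain valid for $y_k=0$, and conclude by citation; if the cited theorems are stated only for interior points, add a paragraph explaining that the proofs are unchanged because every ingredient used has a boundary version already available in the present paper and in \cite{Mink-real}.
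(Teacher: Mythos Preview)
Your proposal is correct and takes essentially the same approach as the paper: the paper gives no proof at all for this proposition, simply stating in the sentence preceding it that it ``combines \cite[Theorem 1.1]{higher} (which gives the upper bound) and \cite[Theorem 4.3]{existence} (which gives the lower bound).'' Your additional verification that the cited theorems cover points in $\lin\HH\sem\{0\}$ (not just $\HH$) is cautious bookkeeping that the paper omits, presumably because those theorems are already stated for the closed half-plane.
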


Now suppose $z_1,\dots,z_n$ are distinct points on $\R\sem \{0\}$. Then $y_k=0$, $1\le k\le n$. So, the $\frac{P_{y_k}(r_k)}{P_{y_k}(l_k)}$ in (\ref{Fzr}) simplifies to $\frac{r_k^\alpha}{l_k^\alpha}$. Then we define
\BGE F(z_1,\dots,z_n)= \prod_{k=1}^n r_k^{-\alpha} F(z_1,\dots,z_n;r_1,\dots,r_n) =\prod_{k=1}^n l_k^{-\alpha}. \label{F}\EDE
This function is different from the $F(z_1,\dots,z_n)$ that appeared in \cite{existence}, which was defined for $z_1,\dots,z_n\in\HH$. By (\ref{perm-Fzr}), we have
\BGE F(z_1,\dots,z_n )\asymp F(z_{\sigma(1)},\dots,z_{\sigma(n)} ).\label{perm-F}\EDE

A simple but useful special case of Proposition \ref{lower-upper} is: when $n=1$ and $z_1\in\R\sem\{0\}$, we have 
\BGE \PP[\tau^{z_1}_{r_1}<\infty]\asymp (r_1/|z_1|)^\alpha,\quad 0<r<|z_1|.\label{1pt}\EDE
The estimate includes a lower bound and an upper bound. They  first appeared in \cite{boundary}.
The upper bound in (\ref{1pt}) was called the boundary estimate in the literature.

From now on till the end of this section, $\PP$ denotes the law of a chordal SLE$_\kappa$ curve in $\HH$ from $0$ to $\infty$; for $z\in\R\sem\{0\}$ and $r>0$, $\PP_z^r$ denotes the conditional law $\PP[\cdot|\tau^z_r<\infty]$, and $\PP_z^*$ denotes the law of a two-sided chordal SLE$_\kappa$ curve in $\HH$ from $0$ to $\infty$ passing through $z$. When $\gamma$ follows some law above in the context, let $U_t$, $K_t$ and $g_t$ be respectively the chordal Loewner driving function, hulls and maps which correspond to $\gamma$. Let $Z_t=g_t-U_t$ be the centered Loewner maps, and let $H_t=\HH\sem K_t$.   For $t\ge 0$, let $S^+_{t}$ be the set of prime ends of $H_t$ that lie on the right side of $\gamma[0,t]$ or on $[b_{K_t},\infty)$, and   let $S^-_{t}$   be the set of prime ends of $H_t$ that lie on the left side of $\gamma[0,t]$ or on $(-\infty,a_{K_t}]$.
 More precisely,  $S^+_t$ and $S^-_t$ are respectively the images of  $[0,+\infty)$ and $(-\infty,0]$ under $Z_t^{-1}$. 

\begin{Proposition}
	Let $z_1,\dots,z_{n}$ be distinct points in $\R\sem\{0\}$, where $n\ge 2$. Let $R_1,\dots,R_n$ be defined by (\ref{lR}). Let $r_j\in(0,R_j/8)$, $1\le j\le n$. 
	Then we have a constant $\beta>0$ such that for any $k_0\in\{2,\dots,n\}$ and $s_{0}\ge 0$,
\begin{align*}
  &\PP[\tau^{z_1}_{r_1}< \tau^{z_k}_{r_k}<\infty,2\le k\le n; \dist(z_{k_0},\gamma[0,\tau^{z_1}_{r_1}])\le s_{0} ]\\
  \lesssim & F(z_1,\dots,z_{n} )\prod_{j=1}^n r_j^\alpha \Big(\frac{s_{0}}{|z_{k_0}-z_1|\wedge |z_{k_0}|}\Big)^{\beta}.
\end{align*}  \label{RZ-Thm3.1}
\end{Proposition}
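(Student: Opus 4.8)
The plan is to bound the probability by combining the multi-point estimate from Proposition \ref{lower-upper} with a one-point ``boundary visiting with a detour'' estimate applied via the Domain Markov Property at the stopping time $\tau^{z_1}_{r_1}$. First I would decompose the event according to the $\HH$-hull $K:=K_{\tau^{z_1}_{r_1}}$: on this event $\gamma$ visits the disc of radius $r_1$ around $z_1$ at time $\tau_1:=\tau^{z_1}_{r_1}$, it gets within distance $s_0$ of $z_{k_0}$ by that time, and then afterwards it must still visit all the remaining discs $D(z_k,r_k)$, $k\ne 1$. Applying the DMP at $\tau_1$, conditionally on $\F_{\tau_1}$ the future curve $\gamma(\tau_1+\cdot)$ is a chordal SLE$_\kappa$ in $\HH\sem K$ from $\gamma(\tau_1)$ to $\infty$; pushing forward by the centered map $Z_{\tau_1}$, this becomes an ordinary chordal SLE$_\kappa$ in $\HH$ from $0$ to $\infty$, and the images $Z_{\tau_1}(z_k)$ of the remaining points can be controlled up to conformal distortion using Proposition \ref{small} (since $\rad_{z_1}(K)\le r_1$ is small compared to the separation scales $R_k$). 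Thus the conditional probability of visiting the remaining $n-1$ discs is $\lesssim \prod_{k\ne 1}(1\wedge (r_k/\tilde l_k)^\alpha)$ for appropriately distorted lengths $\tilde l_k\asymp$ the corresponding $l_k$, which after taking expectation contributes a factor $\asymp \prod_{k\ne 1} l_k^{-\alpha} r_k^\alpha$.

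The remaining factor $r_1^\alpha l_1^{-\alpha} (s_0/(|z_{k_0}-z_1|\wedge|z_{k_0}|))^\beta$ must come from the initial segment $\gamma[0,\tau_1]$: the curve starts at $0$, comes within $r_1$ of $z_1$, and en route comes within $s_0$ of $z_{k_0}$. Here $|z_{k_0}-z_1|\wedge|z_{k_0}|\ge R_{k_0}$ is the relevant macroscopic scale separating $z_{k_0}$ from both $0$ and $z_1$. The natural tool is the two-sided SLE comparison of Propositions \ref{RN<1} and \ref{Prop2.13}: on $\{\tau_1<\infty\}$, $\PP^{r_1}_{z_1}$ is (up to a bounded Radon--Nikodym factor, and with logarithmic derivative $\lesssim (r_1/|z_1|)^\beta$) comparable to the two-sided law $\PP^*_{z_1}$. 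Under $\PP^*_{z_1}$, the first arm $\tilde\gamma$ from $0$ to $z_1$ is an SLE$_\kappa(\kappa-8)$ process, and one needs that it passes within $s_0$ of the fixed boundary point $z_{k_0}$ (which lies at distance $\gtrsim R_{k_0}$ from both endpoints $0$ and $z_1$ of that arm) with probability $\lesssim (s_0/R_{k_0})^\beta$. This is a one-point boundary estimate for two-sided SLE, which should follow from the one-point boundary Green's function bound (\ref{1pt}) together with the absolute continuity between SLE$_\kappa(\kappa-8)$ and ordinary SLE$_\kappa$ away from the force point — or, more directly, from an extremal-length/Beurling-type argument using Propositions \ref{lem-extremal2} and \ref{lem-extremal2'}, since visiting $D(z_{k_0},s_0)$ forces the curve to cross a thin semi-annulus of modulus $\gtrsim \log(R_{k_0}/s_0)$, and crossing probabilities decay like $e^{-\alpha\pi\cdot(\text{modulus})}$.

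Carrying this out carefully, I would: (1) fix the first-point-visited index $1$ and the ordering $\tau_1<\tau^{z_k}_{r_k}<\infty$; (2) condition on $\F_{\tau_1}$, apply DMP, and use Proposition \ref{small} to compare $Z_{\tau_1}(z_k)$ with $z_k$ and hence the distorted separations with the original $l_k$'s, getting the $\prod_{k\ne 1}(1\wedge (r_k/l_k)^\alpha)$ factor with controlled constants (here one must be slightly careful: after removing $K$ the ``base point'' relative to which the new $l$'s are measured shifts, but since $\rad_{z_1}(K)\le r_1 \le R_1/8 \le l_1/8$ the new configuration, with $0$ replaced by $\gamma(\tau_1)$, has separations comparable to the old ones, using $R_j\le l_j$ and the triangle inequality); (3) bound the probability of the initial event $\{\tau_1<\infty,\ \dist(z_{k_0},\gamma[0,\tau_1])\le s_0\}$ by $\PP[\tau^{z_1}_{r_1}<\infty]\cdot \PP^{r_1}_{z_1}[\dist(z_{k_0},\gamma[0,\tau_1])\le s_0] \asymp (r_1/l_1)^\alpha \cdot \PP^{r_1}_{z_1}[\cdots]$, using (\ref{1pt}) and $|z_1|\asymp l_1$ (indeed $l_1=|z_1|$ since $z_0=0$); and (4) bound $\PP^{r_1}_{z_1}[\dist(z_{k_0},\gamma[0,\tau_1])\le s_0]\lesssim (s_0/R_{k_0})^\beta$ via the two-sided comparison plus an extremal-length crossing estimate. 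The main obstacle is step (4): one must control a functional of the \emph{initial} arm $\gamma[0,\tau_1]$ under the \emph{conditioned} law $\PP^{r_1}_{z_1}$, and Proposition \ref{Prop2.13} compares $\PP^{r_1}_{z_1}$ to $\PP^*_{z_1}$ only on $\F_{\tau^{z_1}_\eta}$ for $\eta\gg r_1$ — but since $\gamma[0,\tau_1]\supset\gamma[0,\tau^{z_1}_\eta]$ and the detour near $z_{k_0}$ happens at scale $R_{k_0}\gg r_1$, one can take $\eta$ comparable to $R_{k_0}$ (or to $s_0\vee R_{k_0}$) so that the event $\{\dist(z_{k_0},\gamma[0,\tau^{z_1}_\eta])\le s_0\}$ is $\F_{\tau^{z_1}_\eta}$-measurable and captures the detour, transfer it to $\PP^*_{z_1}$, and there estimate it using the SLE$_\kappa(\kappa-8)$ boundary estimate; a small amount of care is needed to handle the case $s_0\gtrsim R_{k_0}$ (where the bound is trivial, as $(s_0/R_{k_0})^\beta\gtrsim 1$) and to absorb the $(r_1/|z_1|)^\beta\le 1$ error from the Radon--Nikodym derivative.
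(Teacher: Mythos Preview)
Your step (2)--(3) contains a fatal error: you assert that $\rad_{z_1}(K_{\tau_1})\le r_1$, but this is false. The hull $K_{\tau_1}=K_{\tau^{z_1}_{r_1}}$ is generated by $\gamma[0,\tau_1]$, which starts at the origin, so $0\in\lin{K_{\tau_1}}$ and hence $\rad_{z_1}(K_{\tau_1})\ge |z_1|-r_1$. Worse, on the very event you are studying the hull reaches within $s_0$ of $z_{k_0}$, so it is macroscopic and may come arbitrarily close to $z_{k_0}$. Proposition \ref{small} is therefore completely inapplicable, and the claim that after applying $Z_{\tau_1}$ the distorted separations satisfy $\til l_k\asymp l_k$ is not true. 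In particular, the conditional probability $\PP[E_{\ulin r'}\mid\F_{\tau_1}]$ cannot be bounded uniformly by $\prod_{k\ne 1}(r_k/l_k)^\alpha$ on this event: when $\dist(z_{k_0},K_{\tau_1})\le s_0$, the image configuration may have $\til z_{k_0}$ very close to $0$ and $|Z_{\tau_1}'(z_{k_0})|$ large, so the contribution of the $k_0$-th factor in Proposition \ref{lower-upper} degenerates and you lose exactly the $r_{k_0}^\alpha$ you need. Your decomposition thus separates the ``detour'' cost from the ``visit all discs'' cost, but these are entangled---the detour destroys the geometry you need for the multi-point bound.

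The paper proceeds entirely differently: it observes that the event in question is contained in
\[
\{\tau^{z_j}_{r_j}<\infty,\ 1\le j\le n;\ \tau^{z_{k_0}}_{s_0}<\tau^{z_1}_{r_1}<\tau^{z_{k_0}}_{r_{k_0}}\},
\]
a ``three-visit'' event (near $z_{k_0}$, then near $z_1$, then near $z_{k_0}$ again), and invokes \cite[Formula (A.14)]{existence} directly for this event, together with the permutation invariance (\ref{perm-Fzr}). That cited estimate already packages together the full multi-point bound and the extra power of $s_0$ coming from the back-and-forth; its proof in \cite{existence} is itself a nontrivial dyadic/inductive argument, not a simple factorization of the kind you propose. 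If you want a self-contained argument, the right starting point is this three-visit structure rather than conditioning at $\tau_1$.
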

\begin{proof}
  This proposition is very similar to  \cite[Theorem 3.1]{existence}. 	
  The following estimate is \cite[Formula (A.14)]{existence}. For distinct points $z_1,\dots,z_n\in\lin\HH\sem\{0\}$,    $r_j\in(0,R_j)$, $1\le j\le n$, and $s_0>0$,
$$ \PP[\tau^{z_j}_{r_j}<\infty,1\le j\le n; \tau^{z_{1}}_{s_{0}}<\tau^{z_{2}}_{r_{2}}<\tau^{z_1}_{r_1}] \lesssim   F(z_1,\dots,z_n;r_1,\dots,r_n)\cdot\Big(\frac{s_{0}}{|z_{1}-z_{2}|\wedge |z_{1}|}\Big)^{\frac{\alpha}{32n^2}}. $$ 
Let $k_0\in \{2,\dots,n\}$ and $\beta=\frac{\alpha}{32n^2}$.  Applying (\ref{perm-Fzr}) to the above formula with a permutation $\sigma$ of $\N_n$, which sends $1$ to $k_0$ and $2$ to $1$, we find that
$$\PP[\tau^{z_j}_{r_j}<\infty,1\le j\le n; \tau^{z_{k_0}}_{s_{0}}<\tau^{z_{1}}_{r_{2}}<\tau^{z_{k_0}}_{s_0}] \lesssim   F(z_1,\dots,z_n;r_1,\dots,r_n)\cdot\Big(\frac{s_{0}}{|z_{k_0}-z_{1}|\wedge |z_{k_0}|}\Big)^{\beta}.$$
We then complete the proof by setting $z_1,\dots,z_n\in\R\sem\{0\}$.
\end{proof}

\begin{Proposition}
	Let $z_1  \in\R\sem \{0\}$  and $0\le s< r<R\wedge |z_1|$. On the event $\{\tau^{z_1}_ r<\tau^*_{z_1}\}$, let $ \xi_+ $ be the connected component  of $\{|z-z_1|=R\}\cap H_{\tau^{z_1}_ r}$ with one endpoint being $z_1+\sign(z_1)R$; otherwise let $\xi_+=\emptyset$. Let $$E_{r,s;R}=\{\gamma[\tau^{z_1}_ r,\tau^{z_1}_s]\cap \xi_+ =\emptyset \}.$$
Then
	\begin{enumerate}
		\item [(i)]  If $s>0$, $\PP_{z_1}^s[ E_{r,s;R}^c]\lesssim ({ r}/{R})^{\alpha}$.
		\item [(ii)] If $s=0$,  $\PP_{z_1}^*[E_{r,0;R}^c] \lesssim ({ r}/{R} )^{\alpha}$.
	\end{enumerate}\label{stayin}
\end{Proposition}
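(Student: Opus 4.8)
The plan is to reduce both statements to the one-point boundary estimate~(\ref{1pt}) applied inside the domain $H_{\tau^{z_1}_r}$, using the Domain Markov Property (DMP) at the stopping time $\tau^{z_1}_r$. First I would set up notation: work on the event $A:=\{\tau^{z_1}_r<\tau^*_{z_1}\}$ (on its complement $E_{r,s;R}=\emptyset$ trivially, so there is nothing to prove), write $\tau:=\tau^{z_1}_r$, and let $\til\gamma$ be the chordal SLE$_\kappa$ curve in $\HH$ independent of $\F_\tau$ with $\gamma(\tau+t)=Z_\tau^{-1}(\til\gamma(t))$. The key geometric observation is that the event $E_{r,s;R}^c=\{\gamma[\tau,\tau^{z_1}_s]\cap\xi_+\ne\emptyset\}$ forces the continuation $\gamma(\tau+\cdot)$ to travel from $\gamma(\tau)$ (which lies on $\{|z-z_1|=r\}$) out to the arc $\xi_+\subset\{|z-z_1|=R\}$ \emph{before} the curve gets within distance $s$ of $z_1$ — in particular, before it gets within distance $r$ of $z_1$ again. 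Since $\xi_+$ is the connected component of $\{|z-z_1|=R\}\cap H_\tau$ terminating at $z_1+\sign(z_1)R$, it is a crosscut of $H_\tau$ separating $z_1$ from $\infty$ inside $H_\tau$; so any curve in $H_\tau$ from a point on $\{|z-z_1|=r\}$ reaching $\xi_+$ must cross the semi-annulus-like region between radii $r$ and $R$ around $z_1$ in $H_\tau$.

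The main step is then to push this through the conformal map $Z_\tau$. Under $Z_\tau$, the domain $H_\tau$ maps to $\HH$, the point $\gamma(\tau)$ maps to $0$, and $z_1$ maps to $Z_\tau(z_1)\in\R\sem\{0\}$ (nonzero precisely on $A$). By Proposition~\ref{small} (applied to the hull $K_\tau$, which after recentering has radius comparable to $r$ around a real base point near $z_1$, using $r<R\wedge|z_1|$), the images under $Z_\tau$ of the circles $\{|z-z_1|=r\}$ and of the crosscut $\xi_+$ are, up to bounded multiplicative distortion, a "small" set of radius $\asymp r$ and a crosscut of $\HH$ separating $Z_\tau(z_1)$ from $\infty$ at distance $\asymp R$ — here is where I would invoke Proposition~\ref{lem-extremal2'} (with $z_0=z_1$, and the disconnecting crosscut $C_R$ being the component of $\{|z-z_1|=R\}\cap H_\tau$ that disconnects, i.e.\ $\xi_+$ together with whatever else is needed) to control $\rad$ of the image of $\xi_+$ from below, and Proposition~\ref{small} to control it from above. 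The event $E_{r,s;R}^c$ implies that $\til\gamma$, a chordal SLE$_\kappa$ in $\HH$ from $0$ to $\infty$, hits a crosscut of $\HH$ whose distance from $0$ is $\gtrsim R$ while the corresponding images stay at scale $r$; applying the one-point boundary estimate~(\ref{1pt}) (equivalently the boundary/crosscut estimate that the probability an SLE$_\kappa$ curve from $0$ reaches distance $R$ from a boundary point before returning to distance $r$ is $\lesssim (r/R)^\alpha$) to $\til\gamma$ in $\HH$ gives, conditionally on $\F_\tau$ and on $A$,
\[
\PP[E_{r,s;R}^c\mid\F_\tau]\lesssim (r/R)^\alpha
\]
with an implicit constant depending only on $\kappa$. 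Since this conditional bound is uniform on $A$, it survives integration under any of the laws $\PP$, $\PP^s_{z_1}$, or $\PP^*_{z_1}$: in case~(i) we integrate against $\PP^s_{z_1}[\,\cdot\mid\F_\tau]$ on $A$ (noting $\PP^s_{z_1}(A)\le 1$), and in case~(ii) against $\PP^*_{z_1}[\,\cdot\mid\F_\tau]$ on $A$, using that $\PP^*_{z_1}$ conditioned on $\F_\tau$ restricted to $A$ is again, after the first arm passes $\{|z-z_1|=r\}$, a chordal SLE-type continuation to which the same estimate applies (or, more cleanly, invoke Proposition~\ref{RN<1} to transfer from $\PP^s_{z_1}$/$\PP^r_{z_1}$ to $\PP^*_{z_1}$ with a bounded Radon–Nikodym derivative).

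The step I expect to be the main obstacle is making the geometric reduction rigorous: precisely identifying $\xi_+$ as a crosscut of $H_\tau$ that separates $z_1$ from $\infty$, and checking that hitting $\xi_+$ before returning to radius $s$ around $z_1$ genuinely corresponds, under $Z_\tau$, to the continuation curve $\til\gamma$ reaching distance $\asymp R$ from the boundary point $Z_\tau(z_1)$ in $\HH$ — the subtlety being that $Z_\tau$ distorts circles into non-circular crosscuts, so one must use Propositions~\ref{small} and~\ref{lem-extremal2'} carefully to bound the images from both sides and to handle the case $\sign(z_1)R > |z_1|$, i.e.\ where the circle $\{|z-z_1|=R\}$ already encircles $0$ and the relevant component $\xi_+$ is only the right arc. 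Once that correspondence is pinned down, the probability bound is an immediate application of~(\ref{1pt}) to the independent continuation, and the passage between the three measures is routine.
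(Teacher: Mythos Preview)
Your plan has a genuine gap at the central step: stopping at $\tau:=\tau^{z_1}_r$ and trying to bound the probability that the continuation hits $\xi_+$. After $Z_\tau$, the continuation $\til\gamma$ is a chordal SLE$_\kappa$ from $0$ to $\infty$ in $\HH$, and $Z_\tau(\xi_+)$ is a crosscut separating $Z_\tau(z_1)$ from $\infty$. But the curve is \emph{aimed at $\infty$}, so hitting such a crosscut is not a rare event at all---its probability is of order a constant (roughly a harmonic-measure ratio), not $\lesssim (r/R)^\alpha$. The estimate~(\ref{1pt}) says the probability of getting \emph{close} to a boundary point is small; it says nothing about the probability of reaching a set that lies between the tip and $\infty$. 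Your paraphrase ``the probability an SLE$_\kappa$ curve from $0$ reaches distance $R$ from a boundary point before returning to distance $r$ is $\lesssim(r/R)^\alpha$'' is not what~(\ref{1pt}) asserts. A second, related problem: even if you had a correct conditional bound under $\PP$, it would not transfer automatically to $\PP^s_{z_1}$ or $\PP^*_{z_1}$, because under those laws the post-$\tau^{z_1}_r$ curve is \emph{not} an unconditioned chordal SLE$_\kappa$; Proposition~\ref{RN<1} gives absolute continuity only on $\F_{\tau^{z_1}_R}$, not the reverse direction you would need here.

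The paper's fix is to stop at a different time: let $\tau$ be the first time after $\tau^{z_1}_r$ that $\gamma$ hits $\xi_+$, and work on the event $E_\tau=\{\tau<\tau^{z_1}_s\}$. Now the tip sits on $\xi_+$ at distance $R$ from $z_1$, and for $E_{r,s;R}^c\cap\{\tau^{z_1}_s<\infty\}$ to occur the continuation must come \emph{back} to $D:=\{z\in H_\tau:|z-z_1|\le s\}$. This is exactly a ``get close to a boundary point'' event: by DMP and~(\ref{1pt}), $\PP[\text{reach }D\mid\F_\tau,E_\tau]\lesssim(1\wedge \til s/|\til z_1|)^\alpha$, and then Proposition~\ref{lem-extremal2} plus the semi-annulus $\{s<|z-z_1|<R\}$ (which separates $D$ from $S^-_\tau$, since going from the right side of $\gamma$ near $z_1$ to the left side forces you around the tip at distance $R$) give $(1\wedge \til s/|\til z_1|)\lesssim s/R$. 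Combining with $\PP[E_\tau]\le\PP[\tau^{z_1}_r<\infty]\lesssim (r/|z_1|)^\alpha$ and dividing by $\PP[\tau^{z_1}_s<\infty]\gtrsim (s/|z_1|)^\alpha$ yields~(i); part~(ii) then follows from~(i) via Proposition~\ref{RN<1} and sending $s\to 0^+$. The essential idea you are missing is to place the stopping time on $\xi_+$ so that the rare event becomes a \emph{return} to a small neighborhood of $z_1$, to which the boundary estimate genuinely applies.
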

\begin{proof}
(i)  Assume that $z_1 >0$ by the left-right symmetry of chordal SLE. Suppose $\gamma$ follows the law $\PP$. Since $\kappa\in(0,8)$, the probability that $\gamma$ visits $\{z_1+ s,z_1-s,z_1+R\}$ is zero. We now assume that $\gamma$ does not visit this set.
 Let $\tau=\inf(\{t\ge \tau^{z_1}_r:\gamma(t)\in \xi_+\}\cup\{\infty\})$. Then $\tau$ is a stopping time, and $E_{r,s;R}=\{\tau<\tau^{z_1}_s<\infty\}$. Let $E_\tau=\{\tau<\tau^{z_1}_{s}\}\in \F_\tau$. By DMP of chordal SLE, conditionally on $\F_{\tau}$ and the event $E_\tau$, there is a random curve $\til \gamma$ following the law $\PP$ such that $\gamma(\tau+\cdot)=Z_\tau^{-1}\circ \til \gamma$.  Let $D=\{z\in\HH\sem K_\tau:|z-z_1|\le s\}$, $\til D=Z_\tau(D)$, $\til z_1=Z_\tau(z_1)>0$, and $\til s=\rad_{\til z_1}(\til D)>0$. On the event $E_\tau$, in order for $E_{r,s;R}$ to happen, we need that $\gamma(\tau+\cdot)$ visits $D$, which is equivalent to that $\til\gamma$ visits $\til D$. By (\ref{1pt}),
 $$\PP[E_{r,s;R}^c|\F_\tau,E_\tau]\lesssim (1\wedge (\til r_1/\til z_1))^\alpha.$$
 By Lemma \ref{lem-extremal2} and conformal invariance of extremal length,
 $$1\wedge ({\til r_1}/{\til z_1})\lesssim e^{-\pi d_{\HH}((-\infty, 0],\til D)}=e^{-\pi d_{H_\tau}(S^-_\tau,D)}.$$
 Since $S^-_\tau$ can be separated from $D$ in $H_\tau$ by the semi-annulus $\{s<|z-z_1|<R\}$, by the comparison principle of extremal length,
 $ d_{H_\tau}(S^-_\tau,D)\ge \log(R/s)/\pi$. So by the above two displayed formulas we get $\PP[E_{r,s;R}^c|\F_\tau,E_\tau]\lesssim (s/R)^\alpha$, which together with $\PP[E_\tau]\le \PP[\tau^{z_1}_ r<\infty]\lesssim ( r/|z_1|)^\alpha $ (the upper bound in (\ref{1pt}))   implies that $\PP[E_{r,s;R}^c]\lesssim   (   r /{|z_1|} )^\alpha  ( {s}/{R} )^{\alpha}$.
Combining this estimate with the lower bound in (\ref{1pt}), i.e., $\PP[\tau^{z_1}_{s}]\gtrsim (s/z_1)^\alpha$, we get (i).

	(ii) From Proposition \ref{RN<1} and (i), we get $\PP_{z_1}^*[E_{r,s;R}^c] \lesssim ( { r}/{R} )^{\alpha}$ for any $s\in (0, r)$. We then complete the proof by sending $s$ to $0^+$. 
\end{proof}


\begin{Lemma}
  Let $z_1,\dots,z_n,w_1,\dots,w_m$ be distinct points in $\R\sem\{0\}$, where $n\ge 1$ and $m\ge 0$. Suppose that all $z_j$ have the same sign $\sigma_z\in\{+,-\}$, all $w_k$ have the same sign $\sigma_w\in\{+,-\}$,  $\sigma_z\ne \sigma_w$, and both $j\mapsto |z_j|$ and $k\mapsto |w_k|$ are  increasing. Let $z_0=w_0=0$, $z_{n+1}=\sigma_z\cdot \infty$, and $w_{m+1}=\sigma_w\cdot \infty$.  Let $r_j\in(0,(|z_j-z_{j-1}|\wedge |z_j-z_{j+1}|)/2)$, $1\le j\le n$, and $s_k\in (0,(|w_k-w_{k+1}|\wedge |w_k-w_{k-1}|)/2)$, $1\le k\le m$. Let $R>2(|z_n|\vee |w_m|)$. Then
\begin{align}
&\PP[\tau^\infty_R<\tau^{z_j}_{r_j}<\infty,1\le j\le n;\tau^\infty_R<\tau^{w_k}_{s_k}<\infty,1\le k\le m] \nonumber\\
  \lesssim & \Big(\frac{|z_1|}{R}\Big)^\alpha F(z_1,\dots,z_n,w_1,\dots,w_m) \prod_{j=1}^n r_j^\alpha\cdot \prod_{k=1}^m s_k^\alpha.\label{PR}
  \end{align}
  \label{Lemma-PR}
\end{Lemma}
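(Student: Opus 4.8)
The plan is to condition at the stopping time $\tau:=\tau^\infty_R$ by the domain Markov property (DMP) and to bound the conditional probability that the residual curve reaches all the discs by transporting everything to $\HH$ with the centered Loewner map $Z_\tau$ and applying Proposition~\ref{lower-upper}. By the left--right symmetry of chordal SLE we may assume $\sigma_z=+$ and $\sigma_w=-$ (and by rescaling we could normalize, e.g., $R=1$). Write $\mathcal E$ for the event in (\ref{PR}). On $\mathcal E$ we have $\tau<\infty$, and since each of the hitting times $\tau^{z_j}_{r_j},\tau^{w_k}_{s_k}$ exceeds $\tau$, the set $\gamma[0,\tau]$ is disjoint from every closed disc $\lin{B(z_j,r_j)},\lin{B(w_k,s_k)}$; in particular every $z_j,w_k$ is unswallowed at time $\tau$. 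Writing $\gamma(\tau+\cdot)=Z_\tau^{-1}\circ\til\gamma$ with $\til\gamma\sim\PP$ independent of $\F_\tau$, the event forces $\til\gamma$ to visit $Z_\tau(\lin{B(z_j,r_j)}\cap H_\tau)$ for all $j$ and $Z_\tau(\lin{B(w_k,s_k)}\cap H_\tau)$ for all $k$, hence the discs $\lin{B(\ha z_j,\ha r_j)},\lin{B(\ha w_k,\ha s_k)}$ where $\ha z_j=Z_\tau(z_j)$, $\ha r_j=\rad_{\ha z_j}(Z_\tau(\lin{B(z_j,r_j)}\cap H_\tau))$, and similarly for $\ha w_k,\ha s_k$. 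Since $g_\tau$ is monotone on each boundary arc of $H_\tau$ and $|\gamma(\tau)|=R>2(|z_n|\vee|w_m|)$, the image configuration $(0,\ha z_1,\dots,\ha z_n,\ha w_1,\dots,\ha w_m)$ (the starting point of $\til\gamma$ being $0$) has the same combinatorial type as the original, so the upper bound of Proposition~\ref{lower-upper} (in its $1\wedge$ form, together with (\ref{perm-F})) gives, on $\{\tau<\infty\}$,
$$\PP[\mathcal E\mid\F_\tau]\ \lesssim\ \Big(1\wedge\frac{\ha r_1^{\,\alpha}}{|\ha z_1|^\alpha}\Big)\Big(1\wedge\frac{\ha s_1^{\,\alpha}}{|\ha w_1|^\alpha}\Big)\prod_{j=2}^n\Big(1\wedge\frac{\ha r_j^{\,\alpha}}{(\ha z_j-\ha z_{j-1})^\alpha}\Big)\prod_{k=2}^m\Big(1\wedge\frac{\ha s_k^{\,\alpha}}{(\ha w_{k-1}-\ha w_k)^\alpha}\Big)=:\Phi_\tau,$$
so it suffices to show $\EE[\mathbf 1_{\{\tau<\infty\}}\Phi_\tau]\lesssim(|z_1|/R)^\alpha F(z_1,\dots,z_n,w_1,\dots,w_m)\prod_j r_j^\alpha\prod_k s_k^\alpha$.

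By Koebe distortion one has $\ha r_j\asymp r_j|g_\tau'(z_j)|$ and $\ha s_k\asymp s_k|g_\tau'(w_k)|$, and the remaining ``inner'' factors should collapse to the desired quantities once one shows the lower bounds $\ha z_j-\ha z_{j-1}\gtrsim|g_\tau'(z_j)|\,|z_j-z_{j-1}|$ ($j\ge2$), $\ha w_{k-1}-\ha w_k\gtrsim|g_\tau'(w_k)|\,|w_k-w_{k-1}|$ ($k\ge2$), $|\ha w_1|\gtrsim|g_\tau'(w_1)|\,|w_1|$, and the crucial $|\ha z_1|=|Z_\tau(z_1)|\gtrsim R\,|g_\tau'(z_1)|$ (i.e.\ $M_\tau(z_1)\lesssim(|z_1|/R)^\alpha$); the last one is the only place the ``radius $R$'' information is used and it is what upgrades the exponent from $(r_1/|z_1|)^\alpha$ to $(r_1/R)^\alpha$. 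These comparisons are delicate precisely because $\gamma[0,\tau]$ may approach a reference point closely (so $|\ha z_1|$ could a priori be as small as $\asymp\ha r_1$). To establish them one decomposes $\mathcal E$ over the dyadic scales of $\dist(z_1,\gamma[0,\tau])$ (and of $\dist(z_j,\gamma[0,\tau])$, $\dist(w_k,\gamma[0,\tau])$ for the other points): on the piece $\dist(z_1,\gamma[0,\tau])\in[2^{\ell}r_1,2^{\ell+1}r_1)$, with $0\le\ell\le\log_2(|z_1|/r_1)$ (note $\dist(z_1,\gamma[0,\tau])\in(r_1,|z_1|]$ on $\mathcal E$), one conditions first at $\sigma_\ell:=\tau^{z_1}_{2^{\ell+1}r_1}<\tau$ — reaching scale $2^{\ell+1}r_1$ of $z_1$ (other discs not yet visited) costs $\lesssim(2^{\ell+1}r_1/|z_1|)^\alpha$ by (\ref{1pt}) — then uses that between $\sigma_\ell$ and $\tau$ the curve reaches radius $R$ while staying $\ge2^\ell r_1$ from $z_1$ (crudely $\le1$, or sharper via Proposition~\ref{stayin}), and finally that after $\tau$ the residual curve reaching $\lin{B(z_1,r_1)}$ costs $\lesssim(r_1/R)^\alpha$, because the clean semi-annulus $\{r_1<|w-z_1|<2^\ell r_1\}$ together with the ``tube'' the old curve has dug from scale $\asymp2^{\ell+1}r_1$ near $z_1$ out to radius $R$ have, by the composition law of extremal length, combined modulus $\gtrsim\tfrac1\pi\log(R/r_1)$, so the boundary estimate applies; the remaining discs are handled by a further DMP and Proposition~\ref{lower-upper} (or Proposition~\ref{RZ-Thm3.1}). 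Summing the resulting bound over $\ell$ and using $\sum_\ell(2^{\ell+1}r_1/|z_1|)^\alpha\asymp1$ yields exactly $(r_1/R)^\alpha\prod_{j\ge2}(r_j/|z_j-z_{j-1}|)^\alpha(s_1/|w_1|)^\alpha\prod_{k\ge2}(s_k/|w_k-w_{k-1}|)^\alpha=(|z_1|/R)^\alpha F(z_1,\dots,w_m)\prod_j r_j^\alpha\prod_k s_k^\alpha$. This is also why the bound is asymmetric in $z_1$ and $w_1$: one peels off only the single point $z_1$, using $|z_1|$ as its reference scale, while the whole $w$-chain stays anchored at $0$.

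The main obstacle is the bookkeeping of the previous paragraph: running the dyadic decomposition uniformly in all $n+m$ points at once, and organizing the multiple sums so that their product still collapses to the single factor $(|z_1|/R)^\alpha F(z_1,\dots,w_m)\prod r_j^\alpha\prod s_k^\alpha$ rather than to a larger power or a spurious logarithm; proving the extremal-length separation estimates that give the ``$(r_1/R)^\alpha$'' gain — the heart of the matter being that an SLE hull which has run from near $z_1$ out to radius $R$ while avoiding $B(z_1,2^\ell r_1)$ genuinely separates $z_1$ from its tip $\gamma(\tau)$ in $H_\tau$ with extremal distance $\gtrsim\tfrac1\pi\log(R/r_1)$, for which Propositions~\ref{stayin}, \ref{lem-extremal2} and \ref{lem-extremal2'} are the tools; and handling the edge cases where some $z_j$ or $w_k$ lies in $[a_{K_\tau},b_{K_\tau}]$ (inside a ``channel'' of $H_\tau$) rather than on one of the two outer boundary arcs. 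We expect the whole argument to parallel the corresponding estimate for interior reference points in \cite{existence}.
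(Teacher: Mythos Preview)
Your proposal heads in the right direction (condition at $\tau=\tau^\infty_R$, apply DMP, then Proposition~\ref{lower-upper} to the image configuration), but it misses two simplifications that make the paper's proof a short direct argument with no dyadic decomposition at all.

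First, the ordering in Proposition~\ref{lower-upper}. You apply it in the order $z_1,\dots,z_n,w_1,\dots,w_m$, which produces factors $\ha r_j/|\ha z_j-\ha z_{j-1}|$ for $j\ge 2$; the set $(-\infty,\ha z_{j-1}]$ pulls back under $Z_\tau$ to $S^-_\tau$ together with an arc on $S^+_\tau$ that may pass arbitrarily close to $z_j$, so these factors cannot be controlled by a clean semi-annulus. The paper instead orders the points \emph{from outermost to innermost}, i.e.\ $\til z_n,\dots,\til z_1,\til w_m,\dots,\til w_1$. With this ordering the factors are $1\wedge \til r_n/|\til z_n|$, $1\wedge \til r_j/(|\til z_j|\wedge|\til z_j-\til z_{j+1}|)$ for $j\le n-1$, and analogously for the $w$'s; now $[\til z_{j+1},\infty)$ pulls back to $[z_{j+1},\infty)$, genuinely separated from $D_j$ by $\{r_j<|z-z_j|<|z_{j+1}-z_j|\}$.

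Second, the ratios are bounded not by Koebe distortion (tracking $|g_\tau'|$, as you do) but directly by Proposition~\ref{lem-extremal2} and conformal invariance of extremal length: $1\wedge \til r_j/|\til z_j|\lesssim e^{-\pi d_{H_\tau}(S^-_\tau,D_j)}$. Since $\gamma[0,\tau]$ has reached radius $R$, any curve in $H_\tau$ from $D_j$ to $S^-_\tau$ must cross the semi-annulus $\{r_j<|z-z_j|<R-|z_j|\}$, giving immediately $1\wedge \til r_j/|\til z_j|\lesssim r_j/R$ for \emph{every} $j$. This is precisely the ``$(r_1/R)^\alpha$ gain'' you are after, obtained in one line. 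Combining with $1\wedge \til r_j/|\til z_j-\til z_{j+1}|\lesssim r_j/|z_{j+1}-z_j|$ (same argument with the smaller annulus) and the analogous $w$-bounds, and taking expectation, yields~(\ref{PR}). No summation over scales of $\dist(z_1,\gamma[0,\tau])$ is needed; the ``tube'' extremal-length estimate you describe is exactly the semi-annulus separation just stated, and the edge cases you worry about (points inside $[a_{K_\tau},b_{K_\tau}]$) never arise because one works with extremal distances in $H_\tau$, not with $|g_\tau'|$.
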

\begin{proof}
By symmetry, we may assume that $w_m<\cdots<w_1<0<z_1<\cdots <z_n$. Define $F_z$ and $F_w$ such that
$F_z= \prod_{j=1}^n |z_j-z_{j-1}|^{-\alpha}$;
$F_w =\prod_{k=1}^m |w_k-w_{k-1}|^{-\alpha}$, if $m\ge 1$; and $F_w=1$ if $m=0$.
Then we have $F(z_1,\dots,z_n,w_1,\dots,w_m)=F_z F_w$.

Let $\tau=\tau^\infty_R$. Let $E$ denote the event in (\ref{PR}). Then $E=E^\tau_*\cap E_\#$, where
\begin{align*}
  E^\tau_*&:=\{\tau^\infty_R<\tau^{z_j}_{r_j}\wedge \tau^*_{z_j},1\le j\le n;\tau^\infty_R<\tau^{w_k}_{s_k}\wedge \tau^*_{w_k},1\le k\le m\}\in\F_\tau;\\
  E_\#:&=\{ \tau^{z_j}_{r_j}<\infty,1\le j\le n; \tau^{w_k}_{s_k}<\infty,1\le k\le m \}.
\end{align*}

 Suppose the event $E_*^\tau$ occurs. Let $\til z_j=Z_\tau(z_j)$, $D_j=\{z\in H_\tau:|z-z_j|\le r_j\}$, $\til D_j=Z_\tau(D_j)$, and $\til r_j=\rad_{\til z_j}(\til D_j)$, $1\le j\le n$. Let $\til w_k=Z_\tau(w_k)$, $E_k=\{z\in H_\tau:|z-w_k|\le s_k\}$,   $\til E_k=Z_\tau(E_k)$, and $\til s_k=\rad_{\til w_k}(\til E_k)$, $1\le k\le m$. Then $\til w_m<\cdots <\til w_1<0<\til z_1<\cdots <\til z_n$.  By DMP of chordal SLE$_\kappa$ and Proposition \ref{lower-upper},
\begin{align}
   \PP[E_\#|\F_\tau,E_*^\tau] \lesssim &  \Big(1\wedge \frac{\til r_{n}}{|\til z_{n}|}\Big)^\alpha\cdot
\prod_{j=1}^{n-1} \Big(1\wedge \frac{\til r_j}{|\til z_j|\wedge |\til z_j-\til z_{j+1}|}\Big)^\alpha   \nonumber\\
\cdot& \Big(1\wedge \frac{\til s_m}{|\til w_m|}\Big)^\alpha\cdot \prod_{k=1}^{m-1} \Big(1\wedge  \frac{\til s_k}{|\til w_k|\wedge |\til w_k-\til w_{k+1}|}\Big)^\alpha. \label{EE0R}
\end{align}
Here we organize  $\til z_j$'s and $\til w_k$'s by $\til z_n,\dots,\til z_1,\til w_m,\dots,\til w_1$ when   applying Proposition \ref{lower-upper}.
In the case that $m=0$, the second line disappears.

By Proposition \ref{lem-extremal2} and conformal invariance of extremal distance,
\BGE 1\wedge \frac{\til r_{j}}{|\til z_{j}|}\lesssim e^{-\pi d_{\HH} ((-\infty,0],\til D_j)}=e^{-\pi d_{H_\tau}(S^-_\tau, D_j)},1\le j\le n;\label{disjRz}\EDE
\BGE 1\wedge \frac{\til r_{j}}{|\til z_{j}-\til z_{j+1}|}\lesssim e^{-\pi d_{\HH} ([\til z_{j+1},\infty),\til D_j)}=e^{-\pi d_{H_\tau}([z_{j+1},\infty), D_j)},1\le j\le n-1;\label{disj-1Rz}\EDE
\BGE 1\wedge \frac{\til s_{k}}{|\til w_{k}|}\lesssim e^{-\pi d_{\HH} ([0,+\infty),\til E_k)}=e^{-\pi d_{H_\tau}(S^+_\tau, E_k)},1\le k\le m;\label{disjRw}\EDE
\BGE 1\wedge \frac{\til s_{k}}{|\til w_{k}-\til w_{k+1}|}\lesssim e^{-\pi d_{\HH} ((-\infty,\til w_{k+1}],\til E_k)}=e^{-\pi d_{H_\tau}((-\infty,w_{k+1}], E_k)},1\le k\le m-1.\label{disj-1Rw}\EDE

\begin{figure}
\centering
\includegraphics[width=1\textwidth]{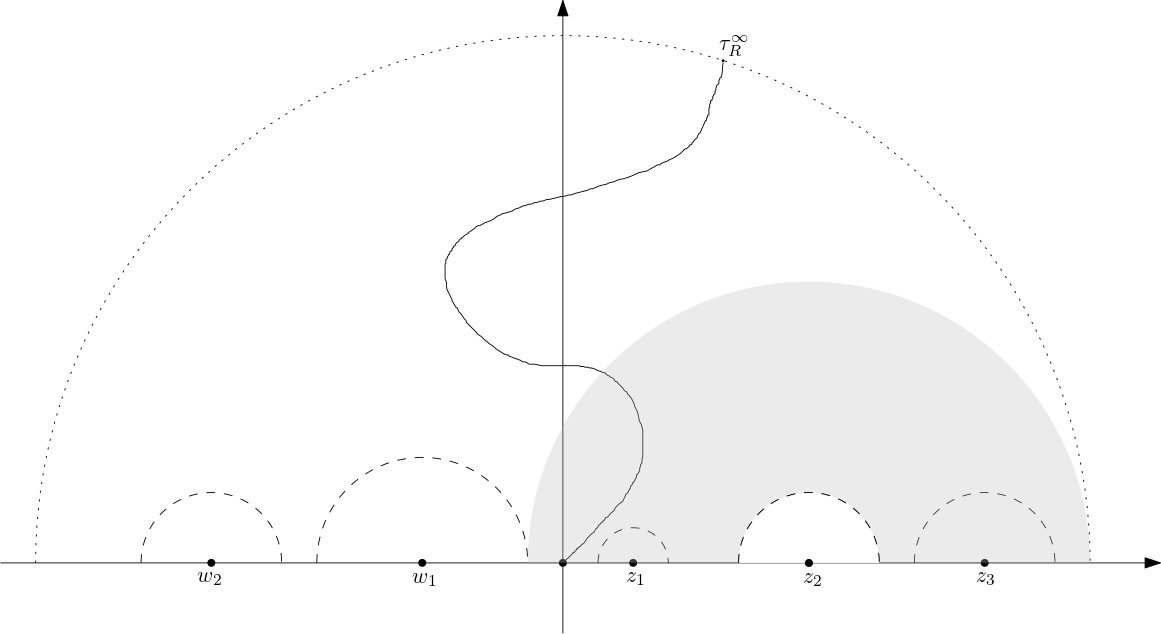}
\caption[A figure for the proof of Lemma \ref{Lemma-PR}]{{\textbf{A figure for the proof of Lemma \ref{Lemma-PR}.}} This figure illustrates an application of the comparison principle of extremal distance in the proof of Lemma \ref{Lemma-PR}. Here $n=3$ and $m=2$. The curve $\gamma$ is stopped at the time $\tau=\tau^\infty_R$. Assume that the event $E_*^\tau$ occurs. To bound the extremal distance $d_{H_\tau}(D_2,S^-_\tau)$ from below for example, we use the semi-annulus (shaded region) $A_2:=\{z\in\HH: r_2<|z-z_2|<R-|z_2|\}$ and the fact that any curve in $H_\tau$ that connects the semi-circle $\pa D_2\cap \HH$ with the left side of $\gamma[0,\tau]$ or the real interval $(-\infty,0]$ must cross $A_2$, i.e., contain a subpath in $A_2$ connecting its two semi-circles. The intersection of  $A_2$ with $\gamma[0,\tau]$  does not cause a problem in the application. }
\label{zwR}
\end{figure}

Since $S^-_\tau$ can be separated from $D_j$ in $H_\tau$ by $\{z\in\HH: r_j<|z-z_j|<R-|z_j|\}$, by comparison principle of extremal distance,
$$d_{H_\tau}(S^-_\tau, D_j)\ge \frac 1\pi \log\Big(\frac{R-|z_j|}{r_j}\Big),$$
which combined with (\ref{disjRz}) and that $R>2|z_j|$ implies that
\BGE 1\wedge \frac{\til r_{j}}{|\til z_{j}|}\lesssim \frac{r_j}{R-|z_j|}\asymp \frac{r_j}{R},\quad 1\le j\le n.\label{rjzjR}\EDE
See Figure \ref{zwR}.
Since $[z_{j+1},\infty)$ can be separated from $D_j$ in $H_\tau$ by   $\{z\in\HH: r_j<|z-z_j|<|z_{j+1}-z_j|\}$, by comparison principle of extremal distance,
$$d_{H_\tau}([z_{j+1},\infty), D_j)\ge \frac 1\pi \log\Big(\frac{|z_{j+1}-z_j|}{r_j}\Big),$$
which combined with (\ref{disj-1Rz}) implies that
\BGE 1\wedge \frac{\til r_{j}}{|\til z_{j}-\til z_{j+1}|}\lesssim \frac{r_j}{|z_{j+1}-z_j|},\quad 1\le j\le n-1.\label{rjzj+1R}\EDE
For $1\le j\le n-1$, since $R-|z_j|\ge |z_{j+1}|-|z_j|=|z_{j+1}-z_j|$, by (\ref{rjzjR}) and (\ref{rjzj+1R}),
\BGE 1\wedge \frac{\til r_j}{|\til z_j|\wedge |\til z_j-\til z_{j+1}|} \lesssim \frac{r_j}{|z_{j+1}-z_j|},\quad 1\le j\le n-1.\label{rjzjR-com}\EDE
Similarly,
\BGE 1\wedge \frac{\til s_{k}}{|\til w_{k}|}\lesssim \frac{s_k}{R-|w_k|}\asymp \frac{s_k}{R},\quad 1\le k\le m;\label{skwkR}\EDE
\BGE 1\wedge \frac{\til s_k}{|\til w_k|\wedge |\til w_k-\til w_{k+1}|} \lesssim \frac{s_k}{|w_{k+1}-w_k|},\quad 1\le k\le m-1.\label{skwkR-com}\EDE

Combining (\ref{EE0R}) with (\ref{rjzjR}) (for $j=n$), (\ref{rjzjR-com}), (\ref{skwkR}) (for $k=m$) and (\ref{skwkR-com}), we get
 \begin{align*}\PP[E_\#|\F_\tau,E_*^\tau]& \lesssim  \Big(\frac{r_n}{R }\Big)^\alpha \prod_{j=1}^{n-1} \Big(\frac{r_j}{|z_j-z_{j+1}|}\Big)^\alpha \cdot \Big(\frac{s_m}{R }\Big)^\alpha \prod_{k=1}^{m-1} \Big(\frac{s_k}{|w_k-w_{k+1}|}\Big)^\alpha\\
 &\le \Big(\frac{|z_1|}{R}\Big)^\alpha F_z F_w\prod_{j=1}^n r_j^\alpha \prod_{k=1}^m s_k^\alpha 
\end{align*}
Here, if $m=0$, the factors involving $s_k$ and $s_m$ disappear; if $m\ge 1$,  we used that $R\ge |w_1|$ in the estimate.  Since $ F(z_1,\dots,z_n,w_1,\dots,w_m)=F_zF_w$, taking expectation we get (\ref{PR}).
\end{proof}

\begin{Lemma}
  Suppose $x_0,\dots,x_N$, $N\ge 1$, are distinct points in $\R\sem\{0\}$ that have the same sign $\nu\in\{+,-\}$, and $j\mapsto |x_j|$ is increasing. Let  $x_{N+1}=\nu\cdot \infty$. Let $R_j=(|x_j-x_{j+1}|\wedge |x_j-x_{j-1}|)/2$ and $r_j\in (0,R_j)$, $1\le j\le N$. Let $r_0\in (0,|x_0-x_1|/2)$. Then
   \begin{align} &\PP[\tau^{x_j}_{r_j}<\tau^{x_0}_{r_0}<\infty;1\le j\le N]
   \lesssim     \Big( \frac{r_N}{|x_N|}\Big)^\alpha \prod_{k=0}^{N-1} \Big(  \frac{ r_k  }{|x_k-x_{k+1}|}\Big)^\alpha\cdot \prod_{k=1}^N  \Big(  \frac{ r_{k}  }{|x_k-x_{k-1}|}\Big)^\alpha \label{n-1}\\
\lesssim &  \Big( \frac{R_N}{|x_N|}\Big)^\alpha \Big(\frac{r_0}{|x_0-x_1|}\Big)^\alpha \prod_{k=1}^{N} \Big(  \frac{ r_k  }{R_k}\Big)^{2\alpha} \le   \Big(\frac{r_0}{|x_0-x_1|}\Big)^\alpha \prod_{k=1}^{N} \Big(  \frac{ r_k  }{R_k}\Big)^{2\alpha}. \label{n-1'}
   \end{align}
  \label{inner-last}
\end{Lemma}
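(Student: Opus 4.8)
\emph{Reductions, and the passage from (\ref{n-1}) to (\ref{n-1'}).} By the left-right symmetry of chordal SLE I may assume $\nu=+$, so $0<x_0<x_1<\cdots<x_N$, $x_{N+1}=+\infty$, and $l_k=|x_k-x_{k-1}|$ for $1\le k\le N$ (since $|x_k-x_i|=|x_k|-|x_i|$ for $i<k$). I may also assume $r_0<|x_0|$: otherwise $\gamma(0)=0$ already lies in $\{z:|z-x_0|\le r_0\}$, so $\tau^{x_0}_{r_0}=0$ and the event in (\ref{n-1}) is empty. Granting (\ref{n-1}), I get (\ref{n-1'}) by elementary algebra: grouping the factors of (\ref{n-1}) by index, $r_0$ contributes only $(r_0/|x_0-x_1|)^\alpha$; for $1\le k\le N-1$, $r_k$ contributes $\big(r_k^2/(|x_k-x_{k+1}|\,|x_k-x_{k-1}|)\big)^\alpha\le (r_k/R_k)^{2\alpha}$ because $|x_k-x_{k\pm1}|\ge 2R_k$; and $r_N$ contributes $\big(r_N^2/(|x_N|\,|x_N-x_{N-1}|)\big)^\alpha\asymp (r_N/R_N)^{2\alpha}\,(R_N/|x_N|)^\alpha$ since $|x_N-x_{N-1}|=2R_N$ (as $x_{N+1}=+\infty$). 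Multiplying gives the first bound in (\ref{n-1'}), and the last inequality there is $R_N=|x_N-x_{N-1}|/2<|x_N|$.

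\emph{Strategy for (\ref{n-1}).} I would decompose the event $\{\tau^{x_j}_{r_j}<\tau^{x_0}_{r_0}<\infty,\ 1\le j\le N\}$ according to the order $\pi$ in which $\gamma$ first enters the discs $\{z:|z-x_j|\le r_j\}$, $1\le j\le N$; there are finitely many orders, and by (\ref{perm-Fzr}) the right side of (\ref{n-1}) is, up to a constant, invariant under permutations, so it suffices to bound $\PP[\tau_1<\cdots<\tau_N<\tau_{N+1}<\infty]$ for one permutation $\pi$, where $\tau_k:=\tau^{x_{\pi(k)}}_{r_{\pi(k)}}$ for $k\le N$ and $\tau_{N+1}:=\tau^{x_0}_{r_0}$. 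I then condition successively at $\tau_1,\dots,\tau_N$: by the DMP, conditionally on $\F_{\tau_k}$ (on the relevant event) $\gamma(\tau_k+\cdot)$ is a chordal SLE$_\kappa$ curve in $H_{\tau_k}$ from $\gamma(\tau_k)$ to $\infty$; pushing it forward to $\HH$ by the centered Loewner map $Z_{\tau_k}$, controlling the pushed-forward radius of the next target disc about the image of its centre via Propositions \ref{small} and \ref{lem-extremal2'} and Koebe distortion, and applying the one-point estimate (\ref{1pt}) together with Proposition \ref{lem-extremal2} and conformal invariance of extremal length, the conditional probability of next reaching that disc is $\lesssim e^{-\alpha\pi d}$ with $d=d_{H_{\tau_k}}(\{z:|z-x_{\pi(k+1)}|\le r_{\pi(k+1)}\},S^{\pm}_{\tau_k})$, the side chosen so that the arc and the disc are disjoint; the comparison principle bounds $d$ below by $\tfrac1\pi$ times the logarithm of the modulus of a suitable semi-annulus about $x_{\pi(k+1)}$ separating it from that side. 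This parallels the proof of Lemma \ref{Lemma-PR}.

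\emph{The core estimate and the main obstacle.} Running the $N{+}1$ conditionings with only the crude semi-annulus bounds reproduces an estimate weaker than (\ref{n-1}) (essentially the upper bound of Proposition \ref{lower-upper} for $x_1<\cdots<x_N$ times a factor $(r_0/(|x_0|\wedge|x_0-x_1|))^\alpha$ for the last disc); in some configurations this already implies (\ref{n-1}), but in general the improvement to (\ref{n-1}) requires sharper lower bounds on these extremal distances, and this is where the geometry of the already-drawn curve enters. At time $\tau_N$ the curve has come within $r_j<R_j=(|x_j-x_{j+1}|\wedge|x_j-x_{j-1}|)/2$ of every $x_j$, $1\le j\le N$, while it started at $0$, on the far side of $x_0$ from all the $x_j$; so in $H_{\tau_N}$ the disc about $x_0$ is separated from the boundary not merely by $\{z\in\HH:r_0<|z-x_0|<|x_0-x_1|/2\}$ but, forced by the drawn curve, also by a succession of disjoint semi-annuli around $x_1,\dots,x_{N-1}$, and the composition law of extremal distance converts these into the additional factors $\prod_{k=0}^{N-1}(r_k/|x_k-x_{k+1}|)^\alpha$ that distinguish (\ref{n-1}) from the crude bound. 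A further subtlety is that the final approach to $x_0$ is a last-exit event — $\gamma$ must leave $\{z:|z-x_0|<|x_0-x_1|/2\}$ to reach the disc about $x_1$ and only later return to the disc about $x_0$ — so it is not governed by a single stopping time; I would handle this, as in the variant argument inside the proof of Proposition \ref{stayin}, by summing over the number of crossings of that semi-annulus before the last entry into $\{z:|z-x_0|\le r_0\}$, obtaining a geometric series. I expect the main obstacle to be precisely this bookkeeping: organizing the successive DMP conditionings together with the nested families of semi-annuli forced by the drawn curve so that all the extremal-distance contributions assemble into exactly the product in (\ref{n-1}) rather than a weaker bound.
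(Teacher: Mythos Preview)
Your reductions and the passage from (\ref{n-1}) to (\ref{n-1'}) are fine and match the paper exactly. The broad strategy---decompose by the order in which the discs are first visited, condition step by step via DMP, and bound each conditional hitting probability by (\ref{1pt}) together with Proposition~\ref{lem-extremal2} and the composition law for extremal length---is also the paper's approach. But two aspects of your sketch do not work as written.

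First, the appeal to (\ref{perm-Fzr}) is a red herring: the right side of (\ref{n-1}) is a fixed expression independent of the visiting order $\pi$, so there is nothing to symmetrize, and you cannot reduce to ``one permutation''. You must bound the event for \emph{each} $\pi$ separately, and the extremal-length argument genuinely depends on $\pi$ through which discs have already been visited. Second---and this is the real gap---you concentrate the multi-annulus argument entirely at the final step $\tau_N\to\tau^{x_0}_{r_0}$, hoping to collect the product $\prod_{k=0}^{N-1}(r_k/|x_k-x_{k+1}|)^\alpha$ there while the intermediate steps supply only the crude Proposition~\ref{lower-upper} factors. This does not assemble into (\ref{n-1}): the intermediate steps, with only a single semi-annulus each, do not in general yield $\prod_{k=1}^N(r_k/|x_k-x_{k-1}|)^\alpha$, because the distance from $x_{\pi(k+1)}$ to the boundary of $H_{\tau_k}$ need not be comparable to $|x_{\pi(k+1)}-x_{\pi(k+1)-1}|$ for an arbitrary $\pi$.

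The paper's fix is to apply the multi-annulus idea not just at the last step but at \emph{every backward step}, meaning every $j$ with $\pi(j+1)<\pi(j)$ (after the step at which $x_N$ is hit). At such a step, any curve in $H_{\tau_j}$ from the disc about $x_{\pi(j+1)}$ to $S^-_{\tau_j}$ must cross the semi-annulus about $x_{\pi(j)}$, the one about $x_{\pi(j+1)}$, and \emph{two} semi-annuli (one on each side) about every already-visited $x_k$ lying strictly between them; the composition law then gives factors $Q^-_{\pi(j)}$, $Q^+_{\pi(j+1)}$, and $Q^+_kQ^-_k$ for each such $k$. A short combinatorial argument (formulas (\ref{Q1n-1})--(\ref{Q2n}) in the paper) shows that, over all backward steps, every index $0\le l\le N-1$ contributes a $Q^+_l$ and every $1\le l\le N$ a $Q^-_l$, which is exactly (\ref{n-1}). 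Your ``last-exit'' worry and the proposed geometric series over crossings are not needed: because the multi-annulus bound is applied at every backward step rather than only at the end, each conditioning is at a genuine stopping time and no last-exit decomposition enters.
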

\begin{proof}
Assume all $x_j$'s are positive by symmetry.
 Let $P$ denote the RHS of (\ref{n-1})  (depending on $x_0,\dots,x_N$ and $r_0,\dots,r_N$). We write $\tau_j$ for $\tau^{x_j}_{r_j}$, $1\le j\le N$. Let $S_N^*$ denote the set of permutation $\sigma$ of $ \{0,1,\dots,N\}$ such that $\sigma(n)=0$. For each $\sigma\in S_N^*$, let $E_\sigma=\{\tau_{\sigma(0)}<\tau_{\sigma(2)}<\cdots<\tau_{\sigma(N)}<\infty\}$.
 Then $\bigcup_{\sigma\in S_N^*} E_\sigma$ is the event in (\ref{n-1}). To prove (\ref{n-1}), it suffices to show that, for any $\sigma\in S_N^*$, $\PP[E_\sigma]\lesssim P$.

 Fix $\sigma\in S_N^*$. For $0\le k\le N-1$, let
 $$E^\sigma_k=\{\tau_{\sigma(0)}<\tau_{\sigma(1)}<\cdots <\tau_{\sigma(k)}<\tau_{\sigma(k+1)}\wedge \tau^*_{x_{\sigma(k+1)}}\}\in  \F_{\tau_{\sigma(k)}};$$
 and let $E^\sigma_N=E_\sigma$. Then
 $E^\sigma_0\supset E^\sigma_1\supset\cdots\supset E^\sigma_N=E_\sigma$.
Let
\BGE S_\sigma=\{j:n-1\ge j\ge \sigma^{-1}(n),\sigma(j+1)<\sigma(j)\}.\label{Tsigma}\EDE
For each $j\in S_\sigma$, let
\BGE S^\sigma_j=\{k : \sigma(j+1)<k<\sigma(j), \sigma^{-1}(k)<j\}.\label{Ssigma}\EDE
In plain words, $S_\sigma$ is the set of index $j\ge j_0$, where $j_0:= \sigma^{-1}(N)$, such that $\sigma(j+1)<\sigma(j)$; and $S^\sigma_j$ is the set of index $k$, which lies strictly between $\sigma(j+1)$ and $\sigma(j)$, such that the disc $\{|z-x_k|\le r_k\}$ was visited by $\gamma$ before $\{|z-x_{\sigma(j)}|\le r_{\sigma(j)}\}$. For example, $j_0$ and $N-1$ belong to $S^\sigma$. For $j\in S_\sigma$, the set $S^\sigma_j$ may or may not be empty. See Figure \ref{Figure-x}.

\begin{figure}
\centering
\includegraphics[width=1\textwidth]{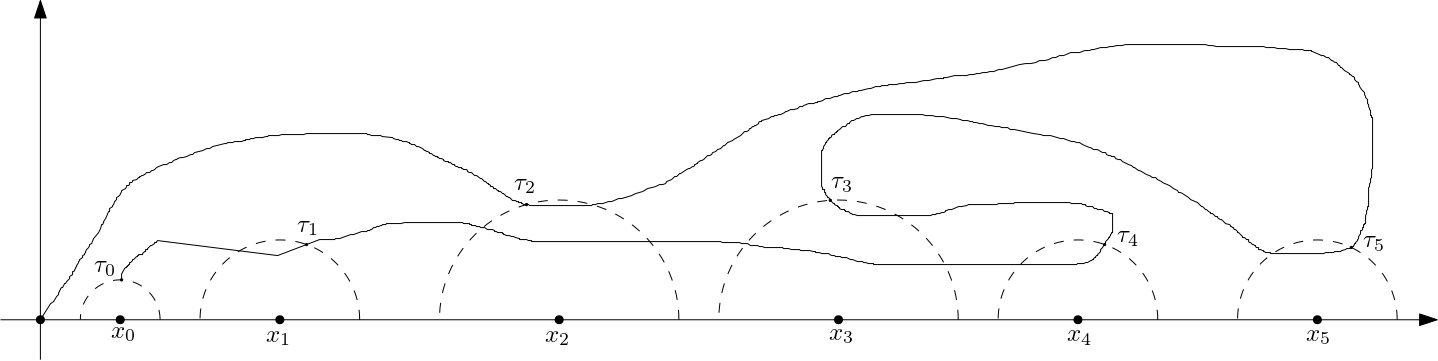}
\caption[The first figure for the proof of Lemma \ref{inner-last}]{\textbf{The first figure for the proof of Lemma \ref{inner-last}.} This figure illustrates a situation in the proof of Lemma \ref{inner-last}. Here $n=5$, and the event $E_\sigma$ happens, where $\sigma = \bigl(\begin{smallmatrix}
0 &  1 & 2 & 3 & 4 & 5 \\
  2 & 5 & 3 & 4 &  1  & 0\end{smallmatrix}\bigr)$. We have $S_\sigma=\{1,3,4\}$ since $ \sigma^{-1}(5)=1$, $\sigma(1)=5>3=\sigma(2)$, $\sigma(3)=4>1=\sigma(4)$, $\sigma(4)=1>0=\sigma(5)$, but $\sigma(2)=3<4=\sigma(3)$. We have $S^\sigma_1=\emptyset$ because the only index between $\sigma(2)$ and $\sigma(1)$ is $4$, and $\sigma^{-1}(4)=3>2$. We have $S^\sigma_3=\{2,3\}$ because $2,3$ lie between $\sigma(4)$ and $\sigma(3)$, and $\sigma^{-1}(2),\sigma^{-2}(3)<3$. We have $S^\sigma_4=\emptyset$ because there is no index that lies between $\sigma(5)$ and $\sigma(4)$.   }
\label{Figure-x}
\end{figure}

For $0\le j\le N-1$, let $Q^+_j=(  \frac{2r_j}{|x_j-x_{j+1}|})^\alpha$. For $1\le j\le N$, let $Q^-_j=( \frac{2r_j}{|x_j-x_{j-1}|})^\alpha$. Let $Q_n=( \frac{r_N}{|x_N|})^\alpha$. Then
$P= Q_N\cdot \prod_{j=1}^{N-1} Q^+_j\cdot \prod_{j=2}^N Q^-_j$. By (\ref{1pt}),
\BGE \PP[E^\sigma_{\sigma^{-1}(N)}]\le \PP[\tau_N<\infty]\lesssim Q_N.\label{PQn}\EDE
We claim that, for any  $j\in S_\sigma$,
\BGE \PP[E^\sigma_{j+1}|\F_{\tau_{\sigma(j)}},E^{\sigma}_j]\lesssim Q_{\sigma(j)}^- Q_{\sigma(j+1)}^+ \prod_{k\in S^\sigma_j}(Q_{k}^+Q_{k}^-);\label{PQj}\EDE
and
\BGE \prod_{j\in S_\sigma}\Big(Q_{\sigma(j)}^- Q_{\sigma(j+1)}^+ \prod_{k\in S^\sigma_j}(Q_{k}^+Q_{k}^-)\Big)\le \prod_{l=0}^{N-1} Q^+_l\cdot \prod_{l=1}^N Q^-_l.\label{PQj-union}\EDE
Note that (\ref{PQn},\ref{PQj},\ref{PQj-union}) together   imply that $\PP[E_\sigma]=\PP[E^\sigma_n]\lesssim P$.

We first prove (\ref{PQj-union}). It suffices to show that
\BGE \{0,\dots,N-1\}\subset \bigcup_{j\in S_\sigma} (  \{\sigma(j+1)\}\cup S^\sigma_j);\label{Q1n-1}\EDE
\BGE \{1,\dots,N\}\subset \bigcup_{j\in S_\sigma} (  \{\sigma(j)\}\cup S^\sigma_j).\label{Q2n}\EDE
Let $l\in\{0,\dots,N-1\}$. We consider several cases. Case 1. $\sigma^{-1}(l)< \sigma^{-1}(N)$.  Since $\sigma^{-1}(0)=N$, we have $l\ge 1$. Since $\sigma(\sigma^{-1}(N))=N>l$ and $\sigma(N)=0<l$, there exists $\sigma^{-1}(N)\le j_0\le N-1$ such that $\sigma(j_0)>l>\sigma(j_0+1)$. By (\ref{Tsigma},\ref{Ssigma}) we have $j_0\in S_\sigma$ and $l\in S^\sigma_{j_0}$. Case 2. $\sigma^{-1}(l)\ge \sigma^{-1}(N)$. Then $\sigma^{-1}(l)-1\ge \sigma^{-1}(N)$ since $l\ne N$. Consider two subcases. Case 2.1. $\sigma(\sigma^{-1}(l)-1)>\sigma(\sigma^{-1}(l))=l$. In this subcase, $j_1:=\sigma^{-1}(l)-1\in S_\sigma$ by (\ref{Tsigma}), and $\sigma(j_1+1)=l$. Case 2.2. $\sigma(\sigma^{-1}(l)-1)<\sigma(\sigma^{-1}(l))=l$. Since $\sigma(\sigma^{-1}(N))=N>l>\sigma(\sigma^{-1}(l)-1)$ and $\sigma^{-1}(N)\le \sigma^{-1}(l)-1$, there exists $\sigma^{-1}(N)\le j_2\le \sigma^{-1}(l)-2$ such that $\sigma(j_2)>l>\sigma(j_2+1)$. This implies that $j_2\in S_\sigma$ and $l\in S^\sigma_{j_2}$. Thus, in all cases, there is some $j\in S_\sigma$ such that $l\in \{\sigma(j+1)\}\cup S^\sigma_j$. So we get (\ref{Q1n-1}).

Let $l\in\{1,\dots,N\}$. We consider several cases. Case 1. $\sigma^{-1}(l)< \sigma^{-1}(N)$. Then $l\le N-1$. By  Case 1 of the last paragraph, there exists $j_0\in S_\sigma$ such that $l\in S^\sigma_{j_0}$. Case 2. $\sigma^{-1}(l)\ge \sigma^{-1}(N)$. Consider two subcases. Case 2.1. $\sigma(\sigma^{-1}(l))>\sigma(\sigma^{-1}(l)+1)$. In this subcase, $j_1:=\sigma^{-1}(l)\in S_\sigma$ and $l=\sigma(j_1)$. Case 2.2. $l=\sigma(\sigma^{-1}(l))<\sigma(\sigma^{-1}(l)+1)$. Since $\sigma(\sigma^{-1}(l)+1)>l>0=\sigma(N)$, there exists $\sigma^{-1}(l)+1\le j_2\le N-1$ such that $\sigma(j_2)>l>\sigma(j_2+1)$. This implies that $j_2\in S_\sigma$ and $l\in S^\sigma_{j_2}$. Thus, in all cases, there is some $j\in S_\sigma$ such that $l\in \{\sigma(j)\}\cup S^\sigma_j$. So we get (\ref{Q2n}). Combining (\ref{Q1n-1},\ref{Q2n}) we get (\ref{PQj-union}).

\begin{figure}
\centering
\includegraphics[width=1\textwidth]{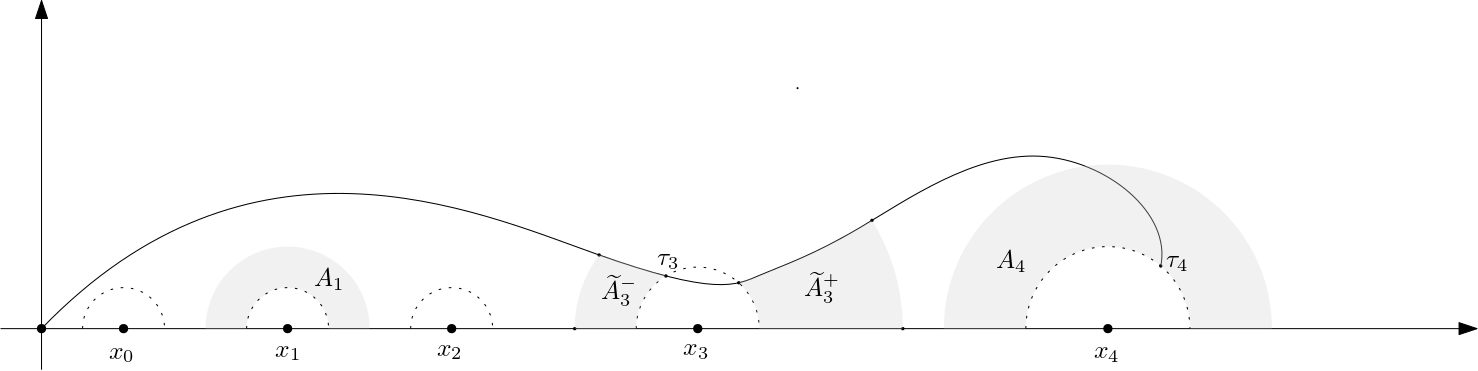}
\caption[The second figure for the proof of Lemma \ref{inner-last}]{\textbf{The second figure for the proof of Lemma \ref{inner-last}.} This figure illustrates an application of the comparison principle of extremal distance in the proof of Lemma \ref{inner-last}. Here $n=4$, and the event $E_\sigma$ happens, where $\sigma = \bigl(\begin{smallmatrix}
0 &  1 & 2 & 3 & 4  \\
  3& 4 & 1 & 2 &  0 \end{smallmatrix}\bigr)$.  We stop the curve at the time $\tau:=\tau_4$. Then the next semi-disc to visit is $D_1=\{z\in\HH:|z-x_1|\le r_1\}$. We know that $1\in S_\sigma$, $\sigma(1)=4$, $\sigma(2)=1$, and $S^\sigma_1=\{3\}$. The  $D_1$ is separated from $S^-_{\tau}$ in $H_\tau$ by the disjoint regions $A_1$, $A_4$, $\til A_3^+$ and $\til A_3^-$, among which $A_1$ and $A_4$ are semi-annuli, and $\til A_3^+$ and $\til A_3^-$ are subsets of two semi-annuli, which have the same center $x_3$, same inner radius $r_3$, but different outer radii.   }
\label{x2}
\end{figure}

Finally, we prove (\ref{PQj}). Fix $j\in S_\sigma$. Let $\tau=\tau_{\sigma(j)}$. Suppose the event $E^{\sigma}_j$ occurs. Let $w=x_{\sigma(j+1)}$, $D=\{z\in\HH: |z-w|\le r_{\sigma(j+1)}\}$, $\til w=Z_\tau(w)$, $\til D=Z_\tau(\til D)$, and $\til r=\rad_{\til w}(\til D)$. By DMP of chordal SLE$_\kappa$ and (\ref{1pt}),
\BGE \PP[E^\sigma_{j+1}|\F_\tau,E^\sigma_j]\le \PP[\tau_{\sigma(j+1)}<\infty |\F_\tau,E^\sigma_j]\lesssim \Big(1\wedge \frac{\til r}{|\til w|}\Big)^\alpha.\label{1wedge>}\EDE
By Proposition \ref{lem-extremal2} and conformal invariance of extremal distance,
\BGE 1\wedge \frac{\til r}{|\til w|}\lesssim e^{-\pi d_{\HH}((-\infty,0],\til D)} =e^{-\pi d_{H_\tau}(S^-_\tau, D)}.\label{1wedge<}\EDE
Define  semi-annuli
\begin{align*}
  A_{\sigma(j)}&=\{z\in\HH: r_{\sigma(j)} <|z-x_{\sigma(j)}|<|x_{\sigma(j)-1}-x_{\sigma(j)}|/2\};\\
A_{\sigma(j+1)}&=\{z\in\HH: r_{\sigma(j+1)}  <|z-x_{\sigma(j+1)}|<|x_{\sigma(j+1)+1}-x_{\sigma(j+1)}|/2\};
\\
A_k^\pm&=\{z\in\HH : r_k  <|z-x_k|<|x_{k\pm 1}-x_k|/2\},\quad k\in S^\sigma_j.
\end{align*}
For each $k\in S^\sigma_j$, define $\til A_k^\pm$ to be the connected component of $A_k^\pm \cap H_\tau$ whose boundary contains $x_k\pm r_k$. Then $A_{\sigma(j)}$, $A_{\sigma(j+1)}$, $\til A_k^+$ and $\til A_k^-$, $k\in S^\sigma_j$, are mutually disjoint. See Figure \ref{x2}. Since the event $E^\sigma_j$ occurs, any curve in $H_\tau$ connecting $D$ with $S^-_\tau$ must contain a subarc crossing $A_{\sigma(j)}$, a subarc crossing $A_{\sigma(j+1)}$, a subarc contained in $\til A_k^+$ crossing $A_k^+$ for each $k\in S^\sigma_j$, and a subarc contained in $\til A_k^-$ crossing $A_k^-$ for each $k\in S^\sigma_j$. By the comparison principle and composition rule of extremal length, we know that
\begin{align}d_{H_\tau}(S^-_\tau, D)\ge & \frac 1\pi\log\Big(\frac{|x_{\sigma(j)}-x_{\sigma(j)-1}|}{2r_{\sigma(j)} } \Big) +\frac 1\pi\log\Big(\frac{|x_{\sigma(j+1)}-x_{\sigma(j+1)+1}|}{2r_{\sigma(j+1)}} \Big) \nonumber\\ &+
\frac 1\pi \sum_{k\in S^\sigma_j} \log\Big(\frac{|x_k-x_{k+1}|}{2r_k }\Big) +
\frac 1\pi \sum_{k\in S^\sigma_j} \log\Big(\frac{|x_k-x_{k-1}|}{2r_k}\Big).\label{ext-lower}
\end{align}
Combining (\ref{1wedge>},\ref{1wedge<},\ref{ext-lower}) we get (\ref{PQj}). Then we get (\ref{n-1}), which implies (\ref{n-1'}) because $|x_k-x_{k\pm 1}|\ge R_k$, $1\le k\le N$, and $R_N=|x_N-x_{N-1}|\le |x_N|$.
\end{proof}

\begin{Remark} By a slight modification of the above proof, we can obtain the following estimate. Let $x_0,\dots,x_{N+1},R_1,\dots,R_N,r_0,\dots,r_N$ be as in Lemma \ref{inner-last}. Let $I=[a,x_0]$ for some $a\in (0,x_0)$, and $\tau^{I}_{r_0}=\tau_{I\times [0,r_0]}$. Then
\begin{align}&\PP[\tau^{z_j}_{r_j}<\tau^{I}_{r_0}<\infty;1\le j\le N]
   \lesssim   \prod_{j=1}^N \Big( \frac{r_j}{R_j}\Big)^{2\alpha}.\nonumber
   \end{align}
To prove the estimate, we may use the same extremal length argument except that we do not use a semi-annulus centered at $x_0$ because such a semi-annulus may not disconnect $I\times [0,r_0]$ from other $x_j$'s in $H_\tau$. So we have the same factor in the upper bound except for $(\frac{r_0}{|x_1-x_0|})^\alpha$. 
  \label{after-inner-last}
\end{Remark}

\begin{Lemma}
Let $z_j$, $0\le j\le n+1$, $w_k$, $0\le k\le m+1$, $r_j$, $1\le j\le n$, $s_k$, $1\le k\le m$, be as in Lemma \ref{Lemma-PR}. Now assume $n\ge 2$.
Let $j_0\in\{2,\dots,n\}$. 
Let
\begin{align*}
  Q= & |z_{j_0-1}-z_{j_0}|^{-\alpha} \cdot \prod_{j=1}^{j_0-1} |z_j-z_{j+1}|^{-\alpha} \cdot |w_m|^{-\alpha}\cdot \prod_{k=1}^{m-1}(|w_k|\wedge |w_k-w_{k+1}|)^{-\alpha}\\
  &\cdot (|z_{j_0}|\wedge |z_{j_0}-z_{j_0+1}|)^{-\alpha}\cdot \prod_{j=j_0+1}^{n} (|z_j-z_{j-1}|\wedge |z_j -z_{j+1}|)^{-\alpha}.
\end{align*}
Here when $m=0$, the $|w_m|^{-\alpha}\cdot \prod_{k=1}^{m-1}(|w_k|\wedge |w_k-w_{k+1}|)^{-\alpha}$ disappears; and when $j_0=n$, the  $\prod_{j=j_0+1}^{n} (|z_j-z_{j-1}|\wedge |z_j -z_{j+1}|)^{-\alpha}$ disappears. Then we have
    \begin{align}
    & \PP[\tau^{z_{j_0}}_{r_{j_0}}<\tau^{z_j}_{r_j}<\infty, j\in\N_n\sem\{j_0\}; \tau^{z_{j_0}}_{r_{j_0}}<\tau^{w_k}_{s_k} <\infty,k\in\N_m]
    \lesssim  Q r_{j_0}^\alpha \cdot \prod_{j=1}^n r_j^\alpha \cdot \prod_{k=1}^m s_k^\alpha.\label{ping-pong-ineq'}
  \end{align}
 \label{ping-pong}
\end{Lemma}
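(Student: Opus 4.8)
The plan is to follow the strategy of the proof of Lemma \ref{inner-last}: decompose the event according to the order in which $\gamma$ visits the $n+m-1$ discs other than $\{|z-z_{j_0}|\le r_{j_0}\}$ (which is fixed to be visited first), and for each such order bound the probability by iterating the DMP of chordal SLE and estimating each one–step transition probability via the boundary estimate (\ref{1pt}) together with an extremal–distance lower bound obtained by composing semi-annuli. Since there are only finitely many orders and $Q$ is, up to multiplicative constants, symmetric in the non–$z_{j_0}$ points by (\ref{perm-Fzr}), it suffices to prove $\PP[E_\sigma]\lesssim Q\,r_{j_0}^\alpha\prod_j r_j^\alpha\prod_k s_k^\alpha$ for each fixed visiting order $\sigma$, where $E_\sigma$ forces the discs to be visited in the order $\sigma$ with $z_{j_0}$ first. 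By the left–right symmetry of chordal SLE I assume $w_m<\cdots<w_1<0<z_1<\cdots<z_n$.

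Fix $\sigma$ and list the ordered centres as $v_0=z_{j_0},v_1,\dots,v_N$ (with $N=n+m-1$), each $v_i$ being some $z_j$ ($j\ne j_0$) or some $w_k$, with radii $\rho_0=r_{j_0},\rho_1,\dots,\rho_N$, hitting times $\tau_i=\tau^{v_i}_{\rho_i}$, and nested $\F_{\tau_i}$–measurable events $E^\sigma_0\supset E^\sigma_1\supset\cdots\supset E^\sigma_N=E_\sigma$ defined exactly as in the proof of Lemma \ref{inner-last} (so that $E^\sigma_{i-1}\subset\{\tau_{i-1}<\tau^*_{v_i}\}$, forcing $v_i$ not to be swallowed by time $\tau_{i-1}$). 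By (\ref{1pt}), $\PP[E^\sigma_0]\le\PP[\tau^{z_{j_0}}_{r_{j_0}}<\infty]\lesssim(r_{j_0}/|z_{j_0}|)^\alpha$. For $i\ge1$, apply the DMP at $\tau_{i-1}$; since $v_i$ is then a prime end of $H_{\tau_{i-1}}$, $\til v_i:=Z_{\tau_{i-1}}(v_i)\in\R$, and writing $D_i'=\{|z-v_i|\le\rho_i\}\cap H_{\tau_{i-1}}$, the boundary estimate (\ref{1pt}) and Proposition \ref{lem-extremal2} (with conformal invariance of extremal distance) give
$$\PP[E^\sigma_i\mid\F_{\tau_{i-1}},E^\sigma_{i-1}]\le\PP[\tau_i<\infty\mid\F_{\tau_{i-1}},E^\sigma_{i-1}]\lesssim\Big(1\wedge\frac{\rad_{\til v_i}(Z_{\tau_{i-1}}(D_i'))}{|\til v_i|}\Big)^\alpha\lesssim e^{-\alpha\pi\,d_{H_{\tau_{i-1}}}(D_i',\,S^{\epsilon_i}_{\tau_{i-1}})},$$
where $\epsilon_i=-$ if $v_i$ is a $z$–point and $\epsilon_i=+$ if $v_i$ is a $w$–point (so that $S^{\epsilon_i}_{\tau_{i-1}}$ lies on the $0$–side of $v_i$). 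One then lower–bounds the extremal distance on the right by composing disjoint semi-annuli, as in (\ref{ext-lower}): a semi-annulus centred at $v_i$ with radii between $\rho_i$ and half the distance from $v_i$ to its neighbour toward $S^{\epsilon_i}_{\tau_{i-1}}$; a semi-annulus centred at that neighbour; and one sub-semi-annulus for each already–visited centre that $\gamma[0,\tau_{i-1}]$ has placed between $v_i$ and $S^{\epsilon_i}_{\tau_{i-1}}$. The new feature relative to Lemma \ref{inner-last} is that when $v_i$ is one of the inner points $z_1,\dots,z_{j_0-1}$, the region $D_i'$ is separated from $S^-_{\tau_{i-1}}$ only by going around the part of the hull near $z_{j_0}$, which contributes an extra semi-annulus of inner radius $\asymp r_{j_0}$; this is the source both of the extra factor $|z_{j_0-1}-z_{j_0}|^{-\alpha}$ in $Q$ and of the doubling $r_{j_0}^\alpha\mapsto r_{j_0}^{2\alpha}$ in the target, and — being a fixed region of $\HH$ — it can be charged only once. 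When an inner $z$–point lies against a stretch of $\gamma$ rather than against a circular arc I would use Proposition \ref{lem-extremal2'} to bound its image radius. Iterating the one–step bounds gives $\PP[E_\sigma]\lesssim\prod_{i=0}^N(\text{one-step bound})_i$.

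It remains to verify, for every order $\sigma$, the combinatorial inequality $\prod_{i=0}^N(\text{one-step bound})_i\lesssim Q\,r_{j_0}^\alpha\prod_j r_j^\alpha\prod_k s_k^\alpha$. This is the analogue of (\ref{PQj-union})–(\ref{Q2n}): every distance factor appearing in $Q$ must be charged to some step $i$ of $\sigma$ with the correct exponent. On the $z$–side this is the book-keeping of Lemma \ref{inner-last} carried out ``from $z_{j_0}$ outward to the right'' for $z_{j_0+1},\dots,z_n$ and ``from $z_{j_0}$ inward to the left, through the neck'' for $z_{j_0-1},\dots,z_1$; on the $w$–side it is the two–sided version of the $F_w$–computation in the proof of Lemma \ref{Lemma-PR}, where the role of the far scale $R$ is played by $0$ itself (since $\gamma$ must pass near $0$ to change sides), which is why the outermost $w_m$ carries $|w_m|^{-\alpha}$ and the remaining $w_k$ carry $(|w_k|\wedge|w_k-w_{k+1}|)^{-\alpha}$, the semi-annulus centred at $0$ again being available only once. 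The main obstacle is precisely this last verification: one must show that, whatever order $\sigma$ the curve uses to ``ping-pong'' among the inner $z$–points, the outer $z$–points and the $w$–points, the semi-annuli at the various steps can always be chosen mutually disjoint and so that the sum of their log-moduli is at least $\frac1\alpha$ times the logarithm of $Q\,r_{j_0}^\alpha\prod_j r_j^\alpha\prod_k s_k^\alpha$ minus a constant, while keeping careful track of which circles (notably the one of inner radius $\asymp r_{j_0}$ around $z_{j_0}$ and the one around $0$) have already been spent. Structurally this is the argument of Lemma \ref{inner-last}, but complicated by the two sides of $0$ and by the anchor $z_{j_0}$ being visited first rather than last, so that the inner points are reached by $\gamma$ coming back.
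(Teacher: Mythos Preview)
Your approach is genuinely different from the paper's, and it has a real gap. The paper does \emph{not} iterate over all visiting orders. Instead it conditions once at $\tau=\tau^{z_{j_0}}_{r_{j_0}}$, applies the multi-point upper bound (Proposition~\ref{lower-upper}) to the conditional SLE to get a single inequality of the form~(\ref{EE0}), and then converts each factor $1\wedge\til r_j/|\til z_j-\til z_{j\pm1}|$ etc.\ into a factor in $Q$ by one extremal-length separation. This avoids the combinatorial bookkeeping you flag as ``the main obstacle'' entirely.

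The gap in your scheme is at the outer $z$-points $z_{j_0+1},\dots,z_n$. When you condition at $\tau_{i-1}$ and $v_i=z_j$ with $j>j_0$, you want the semi-annulus $\{r_j<|z-z_j|<|z_j-z_{j-1}|/2\}$ to separate $D_i'$ from $S^-_{\tau_{i-1}}$. But $S^-_{\tau_{i-1}}$ contains the left side of $\gamma[0,\tau_{i-1}]$, and nothing prevents $\gamma$ from having made an excursion that passes within distance $\ll|z_j-z_{j-1}|$ (though still $>r_j$) of $z_j$ on its way to $z_{j_0}$ or to an earlier $v_\ell$. In that case the separating semi-annulus collapses and your one-step bound degrades to something useless. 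Your remark about using Proposition~\ref{lem-extremal2'} addresses image radii, not this separation failure. The paper faces the same phenomenon (see (\ref{j-D}) and the remark that its right-hand side depends on $K_\tau$) and resolves it by an additional dyadic decomposition: on the event $E_{\ulin k}=\{R_j e^{-k_j}\le\dist(z_j,K_\tau)<R_j e^{1-k_j},\ j>j_0\}$ one gets (\ref{j-D=>}), while $\PP[E_{\ulin k}]$ is bounded via Lemma~\ref{inner-last} by $\prod_j e^{-2\alpha k_j}$; summing over $\ulin k$ recovers the desired estimate. Your iteration would need the same device at every step where an outer $z$-point is visited, layered on top of the sum over orderings, and you have not supplied it.

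A smaller point: the assertion that $Q$ is, up to constants, symmetric in the non-$z_{j_0}$ points ``by (\ref{perm-Fzr})'' is not justified. The inner $z$-factors in $Q$ are $|z_j-z_{j+1}|^{-\alpha}$ while the outer ones are $(|z_j-z_{j-1}|\wedge|z_j-z_{j+1}|)^{-\alpha}$, which is not a permutation-invariant expression of the type covered by (\ref{perm-Fzr}); you would still need to prove the bound for each ordering directly.
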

\begin{proof}
By symmetry, we may assume that $w_m<\cdots<w_1<0<z_1<\cdots <z_n$.  
Let $\tau=\tau^{z_{j_0}}_{r_{j_0}}$. Let $E$  denote the event in (\ref{ping-pong-ineq'}). See Figure \ref{zwj}. Let
\begin{align*}
  E_*^\tau&=\{\tau<\tau^{z_j}_{r_j}\wedge \tau^*_{z_j}: j\in\N_n\sem\{j_0\}\}\in\F_\tau;\\
    E_\#&=\{\tau^{z_j}_{r_j}<\infty,j\in\N_n\sem\{j_0\}; \tau^{w_k}_{s_k}<\infty,1\le k\le m\}.
\end{align*}
Then $E=E_*^\tau\cap E_\#$. By (\ref{1pt}), $\PP[E_*^\tau]\lesssim (r_{j_0}/|z_{j_0}|)^\alpha$.

\begin{figure}
\centering
\includegraphics[width=1\textwidth]{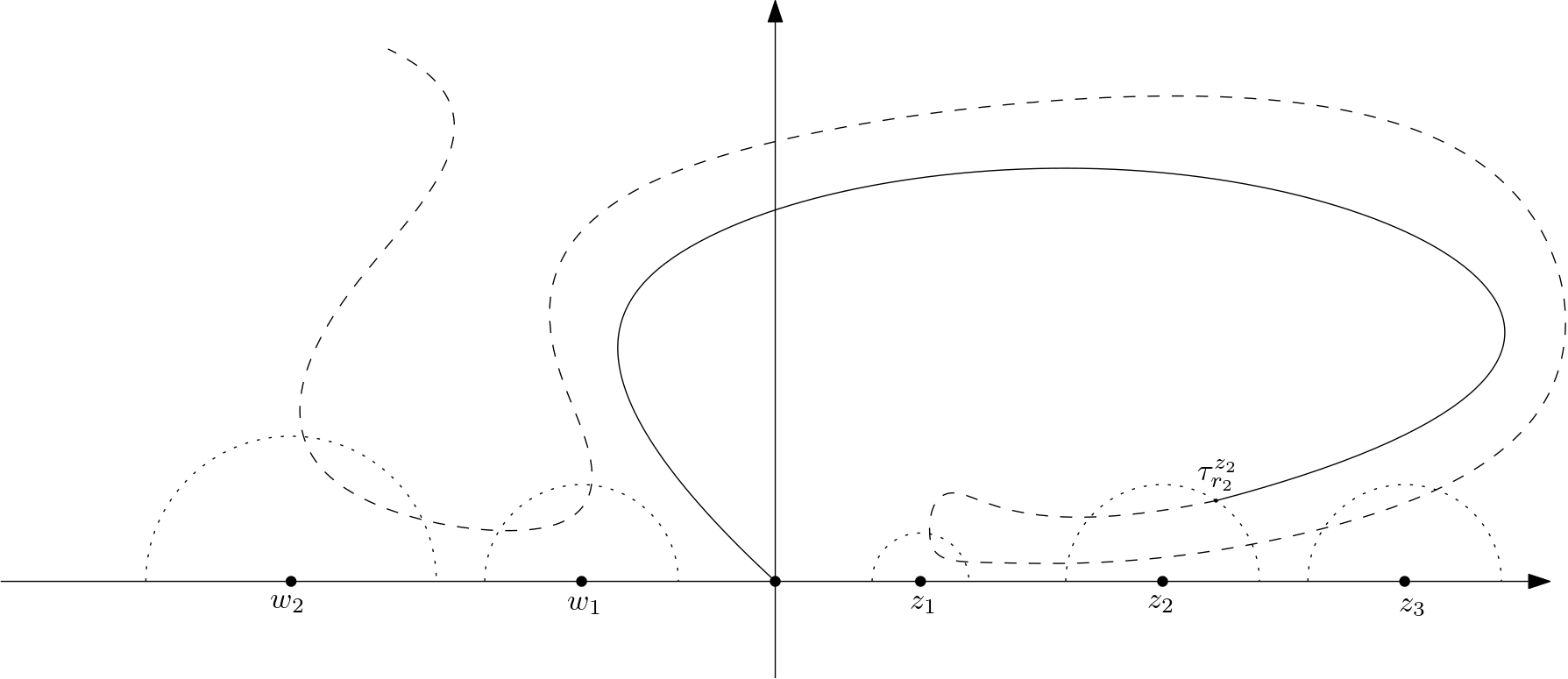}
\caption[A figure for the proof of Lemma \ref{ping-pong}]{\textbf{A figure for the proof of Lemma \ref{ping-pong}.} This figure illustrates the event $E$ in  Lemma \ref{ping-pong}. Here $n=3$, $m=2$, and $j_0=2$. The curve $\gamma$ visits the five semi-discs centered at $z_1,z_2,z_3,w_1,w_2$, among which the one centered at $z_2$ is first visited (at the time $\tau=\tau^{z_2}_{r_2}$). The parts of $\gamma$ before $\tau$ and after $\tau$ are respectively drawn in solid and dashed lines.  }
\label{zwj}
\end{figure}

 Suppose the event $E_*^\tau$ occurs. Let $\til z_j=Z_\tau(z_j)$, $D_j=\{z\in H_\tau:|z-z_j|\le r_j\}$, $\til D_j=Z_\tau(D_j)$, and $\til r_j=\rad_{\til z_n}(\til D_n)$, $1\le j\le n$. Let $\til w_k=Z_\tau(w_k)$, $E_k=\{z\in H_\tau:|z-w_k|\le s_k\}$,   $\til E_k=Z_\tau(E_k)$, and $\til s_k=\rad_{\til w_m}(\til E_m)$, $1\le k\le m$. Then $\til w_m<\cdots <\til w_1<0<\til z_1<\cdots <\til z_n$.  By DMP of chordal SLE$_\kappa$ and Proposition \ref{lower-upper} and that $E=E_*^\tau\cap E_\#$,
\begin{align}
   \PP[E|\F_\tau,E_*^\tau] \lesssim &  
\prod_{j=1}^{j_0-1} \Big(1\wedge \frac{\til r_j}{|\til z_j|\wedge |\til z_j-\til z_{j+1}|}\Big)^\alpha  \cdot \prod_{j=j_0+1}^n \Big(1\wedge \frac{\til r_j}{|\til z_j-\til z_{j-1}|}\Big)^\alpha\nonumber\\
\cdot& \Big(1\wedge \frac{\til s_m}{|\til w_m|}\Big)^\alpha\cdot \prod_{k=1}^{m-1} \Big(1\wedge  \frac{\til s_k}{|\til w_k|\wedge |\til w_k-\til w_{k+1}|}\Big)^\alpha. \label{EE0}
\end{align}
Here when applying Proposition \ref{lower-upper}, we ordered the points $\til z_j$  and $\til w_k$  by $$\til z_{j_0},\dots,\til z_1, \til z_{j_0+1},\dots,\til z_n,\til w_m,\dots,\til w_1,$$ and omit the factor $(1\wedge \frac{\til r_{j_0}}{|\til z_{j_0}|} )^\alpha$, which is bounded by $1$.

 By Proposition \ref{lem-extremal2} and conformal invariance of extremal length,
 \BGE 1\wedge \frac{\til r_j}{|\til z_j|} \lesssim e^{-\pi d_{\HH}((-\infty,0],\til D_j)}=e^{-\pi d_{H_\tau} (S^-_\tau,D_j)} ,\quad 1\le j\le j_0-1;\label{jD}\EDE
 \BGE 1\wedge \frac{\til s_k}{|\til w_k|}\lesssim e^{-\pi d_{\HH}([0,+\infty),\til E_k)}=e^{-\pi d_{H_\tau }(S^+_\tau,\E_k)},\quad 1\le k\le m;\label{kE}\EDE
 \BGE 1\wedge \frac{\til r_j}{|\til z_j-\til z_{j+1}|}\lesssim e^{-\pi d_{\HH}([\til z_{j+1},\infty),\til D_j)}=e^{-\pi d_{H_\tau}([z_{j+1},\infty),D_j)},\quad 1\le j\le j_0-1 ;\label{j+D}\EDE
 \BGE 1\wedge \frac{\til r_j}{|\til z_j-\til z_{j-1}|}\lesssim e^{-\pi d_{\HH}((-\infty,\til z_{j-1}],\til D_j)}= e^{-\pi d_{H_\tau}(K_\tau\cup (-\infty,z_{j-1}],D_j)},\quad j_0+1\le j\le n;\label{j-D}\EDE
 \BGE 1\wedge \frac{\til s_k}{|\til w_k-\til w_{k+1}|}\lesssim e^{-\pi d_{\HH}((-\infty,\til w_{k+1}],\til E_k)}=e^{-\pi d_{H_\tau}((-\infty, w_{k+1}],E_k)},\quad 1\le k\le m-1.\label{k+E}\EDE

Since $S^+_\tau$ and $E_m$ are separated by the semi-annulus $\{z\in\HH: s_m<|z-w_m|<|w_m|\}$ in $H_\tau$,  we have
$d_{H_\tau} (S^+_\tau,E_m)\ge \frac 1\pi \log(\frac{|w_m|}{s_m})$, which together  with (\ref{kE}) implies that
\BGE 1\wedge \frac{\til s_m}{|\til w_m|}\lesssim  \frac{s_m}{|w_m|}.\label{mE1}\EDE
For $1\le j\le j_0-2$, since $D_j$ is separated from both $S^-_\tau$ and $[z_{j+1},\infty)$ by the semi-annulus  $\{z\in\HH: r_j<|z-z_j|<|z_j-z_{j+1}|\}$ in $H_\tau$, we have
$$d_{H_\tau} (S^-_\tau,D_j),d_{H_\tau} ([z_{j+1},+\infty),D_j) \ge\frac 1\pi \log\Big(\frac{ |z_j-z_{j+1}|}{r_j}\Big),$$
which combined with (\ref{jD},\ref{j+D}) implies that
\BGE 1\wedge \frac{\til r_j}{|\til z_j|\wedge |\til z_j-\til z_{j+1}|}=\Big(1\wedge \frac{\til r_j}{|\til z_j| }\Big)\vee \Big(1\wedge \frac{\til r_j}{  |\til z_j-\til z_{j+1}|}\Big)\lesssim \frac{r_j}{|z_j-z_{j+1}|}. \label{jD1}\EDE
For $j=j_0-1$, we have a better estimate. Since $D_{j_0-1}$ is separated from both $S^-_\tau$ and $[z_{j_0},\infty)$ by a disjoint pair of semi-annuli $\{z\in\HH: r_{j_0-1}<|z-z_{j_0-1}|<|z_{j_0-1}-z_{j_0}|/2\}$ and $\{z\in\HH: r_{j_0}<|z-z_{j_0}|<|z_{j_0-1}-z_{j_0}|/2\}$, we have
$$d_{H_\tau} (S^-_\tau,D_{j_0-1}),d_{H_\tau} ([z_{j_0},+\infty),D_{j_0-1}) \ge \frac 1\pi  \log\Big( \frac{|z_{j_0-1}-z_{j_0}|}{2r_{j_0-1}}\Big)+\frac 1\pi \log \Big( \frac{|z_{j_0-1}-z_{j_0}|}{2r_{j_0}})\Big),$$
which combined with (\ref{jD},\ref{j+D}) implies that
\BGE 1\wedge \frac{\til r_{j_0-1}}{|\til z_{j_0-1}|\wedge |\til z_{j_0-1}-\til z_{j_0}|} \lesssim \frac{r_{j_0-1} }{|z_{j_0-1}-z_{j_0}| }\cdot  \frac{r_{j_0} }{|z_{j_0-1}-z_{j_0}| }. \label{jD1'}\EDE
For   $1\le k\le m-1$, since $E_k$ is separated from $S^+_\tau$ by   $\{z\in\HH:s_k<|z-w_k|<|w_k|\}$ in $H_\tau$, we get
$d_{H_\tau} (S^+_\tau,E_k) \ge \frac 1\pi \log (\frac{|w_k |}{2s_k})$.
Since $E_k$ is separated from $(-\infty,w_{k+1}]$ by  $\{z\in\HH:s_k<|z-w_k|<|w_k-w_{k+1}|\}$ in $H_\tau$, we get
$  d_{H_\tau} ((-\infty, w_{k+1}],E_k)  \ge \frac 1\pi \log (\frac{|w_k-w_{k+1}|}{2s_k}) $.
These two lower bounds of extremal lengths combined with (\ref{kE},\ref{k+E}) imply that
\BGE 1\wedge \frac{\til s_k}{|\til w_k|\wedge |\til w_k-\til w_{k+1}|}\lesssim \frac{s_k}{|w_k-w_{k+1}|\wedge |w_k|}.\label{kE1}\EDE

Suppose $j_0=n$.
Combining   (\ref{EE0},\ref{mE1}-\ref{kE1}), we get
$$\PP[E|\F_\tau,E_*^\tau]\lesssim  \Big(\frac{r_{j_0}}{|z_{j_0-1}-z_{j_0}|}\Big)^\alpha \Big(\frac{s_m}{|w_m |}\Big)^\alpha \cdot \prod_{j=1}^{j_0-1} \Big(\frac{r_j}{|z_j-z_{j+1}|}\Big)^\alpha \cdot \prod_{k=1}^{m-1} \Big(\frac{s_k}{|w_k-w_{k+1}|\wedge|w_k|}\Big)^\alpha,$$ which together with $\PP[E_*^\tau]\lesssim (r_{j_0}/|z_{j_0}  |)^\alpha$ and $z_{j_0+1}=\infty$ implies (\ref{ping-pong-ineq'}) for $j_0=n$.


Now suppose $2\le j_0\le n-1$. Let $\N_{(j_0,n]}=\{j_0+1,\dots,n\}$. For $j\in\N_{(j_0,n]}$, let $R_j=(|z_j-z_{j-1}|\wedge |z_j-z_{j+1}|)/2$. For each $\ulin k=(k_{j_0+1},\dots,k_n)\in (\N\cup\{0\})^{\N_{(j_0,n]}}$, let $S_{\ulin k}=\{j\in\N_{(j_0,n]}:K_j\ge 1\}$, and
$E_{\ulin k}$ denote the event that $\tau<\infty$ and $\dist(z_j,K_\tau)\ge R_j$, for $j\in \N_{(j_0,n]}\sem S_{\ulin k}$, and $R_j e^{-k_j}\le \dist(z_j,K_\tau)<R_j e^{1-k_j}$ for $j\in S_{\ulin k}$.

We now bound $\PP[E_{\ulin k}]$. If $S_{\ulin k}=\emptyset$, we use (\ref{1pt}) to conclude that
$$\PP[E_{\ulin k}]\le \PP[\tau^{z_{j_0}}_{r_0}<\infty]\lesssim (r_{j_0}/|z_{j_0}|)^\alpha.$$
Suppose $S_{\ulin k}\ne \emptyset$. We express $S_{\ulin k}=\{j_1<\cdots<j_N\}$. Let $x_s=z_{j_s}$, $0\le s\le N$. By the definition of $E_{\ulin k}$ and Lemma \ref{inner-last}, we have
\begin{align*}
  &\PP[E_{\ulin k}]\le \PP[\tau^{x_s}_{e^{1-k_{j_s}} R_{j_s}}<\tau^{x_0}_{r_{j_0}}<\infty,1\le s\le N]\\
  \lesssim &\Big(\frac{r_{j_0}}{|x_1-x_0|}\Big)^\alpha \prod_{s=1}^N \Big(\frac{e^{1-k_{j_s}} R_{j_s}}{  R_{j_s}}\Big)^{2\alpha}\lesssim  \Big( \frac{r_{j_0}}{|z_{j_0}-z_{j_0+1}|}\Big)^\alpha \prod_{j=j_0+1}^n  e^{-2\alpha k_j}.
\end{align*}
Combining the two formulas, we conclude that, for any $\ulin k \in (\N\cup\{0\})^{\N_{(j_0,n]}}$
 \BGE \PP[E_{\ulin k}]\lesssim 
 \Big( \frac{r_{j_0}}{|z_{j_0}|\wedge |z_{j_0}-z_{j_0+1}|}\Big)^\alpha \prod_{j=j_0+1}^n  e^{-2\alpha k_j}.\label{PEk}\EDE

Suppose for some $\ulin k=(k_{j_0+1},\dots,k_n)$, $E_{\ulin k}\cap E_*$ happens. We claim that
\BGE 1\wedge \frac{\til r_j}{|\til z_j-\til z_{j-1}|}\lesssim   \frac{r_j}{R_j e^{-k_j}},\quad j_0+1\le j\le n.\label{j-D=>}\EDE
Let $j\in\N_{(j_0,n]}$. First, (\ref{j-D=>}) holds trivially if $r_j\ge  R_j e^{-k_j}$.
Suppose that $r_j< R_j e^{-k_j}$. Then $D_j$ can be disconnected from $K_\tau$ and $(-\infty,z_{j-1}]$ in $H_\tau$ by  $\{z\in\HH:r_j<|z|<R_j e^{-k_j}\}$. By comparison principle of extremal distance, we have
$$d_{H_\tau}(K_\tau\cup (-\infty,z_{j-1}],D_j)\ge \frac 1\pi \log\Big(\frac{R_j e^{-k_j}}{r_j}\Big),$$
which together with (\ref{j-D}) implies (\ref{j-D=>}). So the claim is proved.

Combining   (\ref{EE0},\ref{mE1}-\ref{j-D=>}) and that $R_j=(|z_j-z_{j-1}|\wedge |z_j-z_{j+1}|)/2$, we get
\begin{align*} \PP[E\cap E_{\ulin k}]\lesssim&  \Big(\frac{r_{j_0} }{|z_{j_0}-z_{j_0-1}| }\Big)^\alpha \Big(\frac{s_m}{|w_m|}\Big)^\alpha \cdot \prod_{k=1}^{m-1} \Big(\frac{s_k}{|w_k-w_{k+1}|\wedge |w_k|}\Big)^\alpha\cdot \prod_{j=1}^{j_0-1}\Big( \frac{r_j}{|z_j-z_{j+1}|}\Big)^\alpha  \\ \cdot &  \Big( \frac{r_{j_0}}{|z_{j_0}|\wedge |z_{j_0}-z_{j_0+1}|}\Big)^\alpha 
\prod_{j=j_0+1}^n (|z_j-z_{j-1}|\wedge |z_j-z_{j+1}|)^{-\alpha} \cdot \prod_{j=j_0+1}^n e^{-\alpha k_j}. 
\end{align*}
Summing the inequality over $\ulin k\in(\N\cup\{0\})^{\N_{(j_0,n]}}$, we get (\ref{ping-pong-ineq'}).
%
\end{proof}

\begin{Lemma}
Let $n,m,j_0$, $z_0,\dots,\ha z_{j_0},\dots,z_{n+1}$, and $w_0,\dots,w_{m+1}$ be as in Lemma \ref{ping-pong}. Here the symbol $\ha z_{j_0}$ means that $z_{j_0}$ is missing in the list. Let $I$ be a compact real interval that lies strictly between $z_{j_0-1}$ and $z_{j_0+1}$. Let $L_\pm= \dist(z_{j_0\pm 1},I)>0$. Here if $j_0=n$, then $L_+=\infty$. Let $r_1,\dots,\ha r_{j_0},\dots,r_n$, and $s_1,\dots,s_m$ be as in Lemma \ref{ping-pong} except that we now require that  $r_{j_0\pm 1}<(|z_{j_0\pm 1}-z_{j_0\pm 2}|\wedge L_\pm)/2$.  Let
\begin{align*}
  Q=&L_-^{-2\alpha} \prod_{j=1}^{j_0-2} |z_j-z_{j+1}| \cdot |w_m|^{-\alpha} \prod_{k=1}^{m-1} (|w_k|\wedge |w_k-w_{k+1}|)^{-\alpha}
\\
  &\cdot (L_+\wedge |z_{j_0+1}-z_{j_0+2}|)^{-\alpha} \prod_{j=j_0+2}^n (|z_j-z_{j+1}|\wedge |z_j-z_{j-1}|)^{-\alpha}.
  \end{align*}
Here when $m=0$, the $|w_m|^{-\alpha} \prod_{k=1}^{m-1} (|w_k|\wedge |w_k-w_{k+1}|)^{-\alpha}$ disappears; and when $j_0=n$, the second line in the formula disappears. Let $h\in(L_+\wedge L_-)/2$ and  $\tau^I_{h}=\tau_{I\times [0,h]}$.
Then
\begin{align}
  \PP[\tau^I_{h}<\tau^{z_j}_{r_j}<\infty,j\in\N_n\sem\{j_0\}; \tau^I_{h}<\tau^{w_k}_{s_k}<\infty,k\in\N_m]\lesssim Q h^\alpha \prod_{j\in \N_m\sem \{j_0\}} r_j^\alpha \cdot \prod_{k=1}^m  s_k^\alpha.  \label{ping-pong-ineq-I}
\end{align}
 \label{ping-pong-I}
\end{Lemma}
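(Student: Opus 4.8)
Throughout I write $\tau=\tau^I_h=\tau_{I\times[0,h]}$, and by the left-right symmetry of chordal SLE I may assume $w_m<\cdots<w_1<0<z_1<\cdots<z_n$, with $z_{j_0}$ absent and $I\subset(z_{j_0-1},z_{j_0+1})$. Decompose the event in (\ref{ping-pong-ineq-I}) as $E=E^\tau_*\cap E_\#$, where $E^\tau_*=\{\tau<\tau^{z_j}_{r_j}\wedge\tau^*_{z_j},\ j\in\N_n\sem\{j_0\}\}\in\F_\tau$ and $E_\#=\{\tau^{z_j}_{r_j}<\infty,\ j\in\N_n\sem\{j_0\};\ \tau^{w_k}_{s_k}<\infty,\ k\in\N_m\}$. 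The plan is to run the proof of Lemma~\ref{ping-pong} essentially verbatim, with the rectangle $I\times[0,h]$ playing the role of the pivot disc $\{|z-z_{j_0}|\le r_{j_0}\}$ and $h$ playing the role of $r_{j_0}$, and with Remark~\ref{after-inner-last} (the ``fat interval'' analogue of Lemma~\ref{inner-last}) replacing Lemma~\ref{inner-last}. The only structural facts about $K_\tau$ used on $E^\tau_*$ are that the tip $\gamma(\tau)$ lies in $I\times[0,h]$, so $K_\tau$ reaches $I$ with $\Imm\gamma(\tau)\le h$, and that no $z_j$ has been swallowed.

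First I would treat $j_0=n$, where $\PP[E^\tau_*]\le\PP[\tau^I_h<\infty]\lesssim1$ suffices. Conditioning on $\F_\tau$ on $E^\tau_*$, DMP and Proposition~\ref{lower-upper} applied to the conditional chordal SLE in $H_\tau$ (pushing the points forward by $Z_\tau$ and ordering them $\til z_1,\dots,\til z_{n-1},\til w_m,\dots,\til w_1$) bound $\PP[E_\#\mid\F_\tau]$ by a product of factors $1\wedge(\til r_j/\til l_j)$ and $1\wedge(\til s_k/\til m_k)$; Proposition~\ref{lem-extremal2} and conformal invariance of extremal distance turn these into $e^{-\pi d_{H_\tau}(\cdot,\cdot)}$, which the comparison/composition principles bound below by sums of $\tfrac1\pi\log(\mathrm{outer}/\mathrm{inner})$ over separating semi-annuli. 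As in Lemma~\ref{ping-pong} the route ``around $0$'' is blocked (the curve emanates upward from $0$), so every crosscut from $D_j$ to $S^-_\tau$ must pass to the right and round the tip $\gamma(\tau)$; hence the semi-annulus about $z_j$ may be taken with outer radius the gap to the right ($|z_j-z_{j+1}|$ for $j\le n-2$, and $L_-=\dist(z_{n-1},I)$ for $j=n-1$), and no semi-annulus is available about the macroscopic pivot $I$ — exactly the loss recorded in Remark~\ref{after-inner-last}, which is why $h$ enters only to the first power. The new point is the analogue of (\ref{jD1'}) for $D_{n-1}$: because $\gamma(\tau)\in I\times[0,h]$ lies at distance $\ge L_-$ from $z_{n-1}$ and within $h$ of $\Ree\gamma(\tau)\in\R$, every crosscut from $D_{n-1}$ to $S^-_\tau$ crosses \emph{both} $\{r_{n-1}<|z-z_{n-1}|<L_-/2\}$ and $\{Ch<|z-\Ree\gamma(\tau)|<L_-/2\}$, so $1\wedge(\til r_{n-1}/|\til z_{n-1}|)\lesssim(r_{n-1}/L_-)(h/L_-)$. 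Multiplying the conditional bound by $\PP[E^\tau_*]\lesssim1$ and taking expectation gives (\ref{ping-pong-ineq-I}) for $j_0=n$.

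For $2\le j_0\le n-1$ there are further points $z_{j_0+1},\dots,z_n$ to the right of $I$ to be visited after $\tau$. Following Lemma~\ref{ping-pong}, I would decompose over $\ulin k\in(\N\cup\{0\})^{\N_{(j_0,n]}}$ according to the dyadic scale of $\dist(z_j,K_\tau)$ for $j>j_0$, obtaining $E_{\ulin k}\in\F_\tau$; since on $E_{\ulin k}$ the discs about the $z_j$ with $k_j\ge1$ — all to the right of $I$ — are hit before $\tau^I_h$, Remark~\ref{after-inner-last} gives $\PP[E_{\ulin k}]\lesssim\prod_{j>j_0}e^{-2\alpha k_j}$. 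The conditional estimate is as before: the $D_{j_0-1}$ factor is $(r_{j_0-1}/L_-)^\alpha(h/L_-)^\alpha$ exactly as above, the $D_{j_0+1}$ factor is controlled by $L_+\wedge|z_{j_0+1}-z_{j_0+2}|$ (the left obstacle of $z_{j_0+1}$ being now $I$, at distance $L_+$), and the $D_j$ with $j\ge j_0+2$ contribute the compensating powers $e^{\alpha k_j}$ via (\ref{j-D=>}). Summing $\PP[E\cap E_{\ulin k}]\le\PP[E_{\ulin k}]\cdot\sup_{E^\tau_*\cap E_{\ulin k}}\PP[E_\#\mid\F_\tau]$ over $\ulin k$ then yields the factor $(L_+\wedge|z_{j_0+1}-z_{j_0+2}|)^{-\alpha}\prod_{j=j_0+2}^n(|z_j-z_{j+1}|\wedge|z_j-z_{j-1}|)^{-\alpha}$ of $Q$ and finishes the proof.

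The step I expect to be the main obstacle is the improved estimate for $D_{j_0-1}$, i.e.\ the lower bound $d_{H_\tau}(S^-_\tau,D_{j_0-1})\ge\tfrac1\pi\log(L_-/r_{j_0-1})+\tfrac1\pi\log(L_-/h)+O(1)$ on $E^\tau_*$. The first term is the semi-annulus about $z_{j_0-1}$; the second must come from the crosscut being forced within $O(h)$ of $\Ree\gamma(\tau)$ when it rounds the tip. This is transparent when the curve enters $I\times[0,h]$ roughly vertically (the incoming strand of $K_\tau$ at $\gamma(\tau)$ is near-vertical, so any crosscut rounding the tip must dip below $\gamma(\tau)$, hence within height $\le h$ of $\R$ at $\Ree z\approx\Ree\gamma(\tau)$), but it needs separate treatment when the curve approaches $\gamma(\tau)$ nearly horizontally from outside the horizontal span of $I$ at height $\le h$; there one exploits that just before $\tau$ the curve was outside $I\times[0,h]$, which pins its low-lying strand into a controlled region. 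Carrying out this case analysis, and doing the bookkeeping that uniformly replaces $|z_{j_0-1}-z_{j_0}|$ by $L_-$ and $|z_{j_0}-z_{j_0+1}|$ by $L_+$ so that no factor of $|I|$ ever survives, is the only genuine work; everything else is a routine transcription of the arguments in Lemma~\ref{ping-pong} and Remark~\ref{after-inner-last}.
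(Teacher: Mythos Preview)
Your proposal is correct and follows the paper's approach essentially step for step: the decomposition $E=E^\tau_*\cap E_\#$, the conditional bound via DMP and Proposition~\ref{lower-upper}, the extremal-length estimates, and (for $j_0<n$) the dyadic decomposition over $\ulin k$ with Remark~\ref{after-inner-last} supplying $\PP[E_{\ulin k}]\lesssim\prod_j e^{-2\alpha k_j}$ are all exactly what the paper does.

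The one place you over-complicate matters is the improved estimate for $D_{j_0-1}$. No case analysis on the direction of approach is needed. The paper simply \emph{defines} $z_{j_0}:=\Ree\gamma(\tau)$ (an $\F_\tau$-measurable point in $I$, so $|z_{j_0-1}-z_{j_0}|\ge L_-$) and then runs the two-semi-annuli argument of (\ref{jD1'}) verbatim, with the inner radius of the second semi-annulus being $h$ instead of $r_{j_0}$: since $|\gamma(\tau)-z_{j_0}|=\Imm\gamma(\tau)\le h$, the tip lies inside $\{|z-z_{j_0}|\le h\}$, so $D_{j_0-1}$ is separated from both $S^-_\tau$ and $[z_{j_0},\infty)$ in $H_\tau$ by the disjoint pair $\{r_{j_0-1}<|z-z_{j_0-1}|<|z_{j_0-1}-z_{j_0}|/2\}$ and $\{h<|z-z_{j_0}|<|z_{j_0-1}-z_{j_0}|/2\}$, yielding
\[
1\wedge\frac{\til r_{j_0-1}}{|\til z_{j_0-1}|\wedge|\til z_{j_0-1}-\til z_{j_0}|}\ \lesssim\ \frac{r_{j_0-1}\,h}{|z_{j_0-1}-z_{j_0}|^2}\ \le\ \frac{r_{j_0-1}\,h}{L_-^2}.
\]
This is precisely the separation argument used for (\ref{jD1'}) in Lemma~\ref{ping-pong}, and it is insensitive to whether $\gamma$ enters $I\times[0,h]$ vertically or horizontally---all that matters is that the tip sits inside the inner disc of the second semi-annulus. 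With this observation your proof sketch becomes a line-by-line transcription of the paper's.
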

\begin{proof}
  The proof is similar to that of Lemma \ref{ping-pong}. The only essential difference is that now we do not get an upper bound of $\PP[\tau^I_{h}<\infty]$ using (\ref{1pt}). 
By symmetry we assume that $z_j$'s are positive and $w_k$'s are negative. Let $\tau=\tau^I_{h}$ and $z_{j_0}=\Ree \gamma(\tau)$. Then  $z_{j_0}$ is $\F_\tau$-measurable, and $z_{j_0-1}<z_{j_0}<z_{j_0+1}$.

Let $E$ denote the event in (\ref{ping-pong-ineq-I}). Then $E=E_*^\tau\cap E_\#$, where
  $$E^\tau_*:=\{\tau <\tau^{z_j}_{r_j}\wedge \tau^*_{z_j},j\in\N_n\sem \{j_0\};\tau <\tau^{w_k}_{s_k}\wedge \tau^*_{w_k},1\le k\le m\}\in\F_\tau;$$
  $$E_\#:=\{ \tau^{z_j}_{r_j}<\infty,j\in\N_n\sem \{j_0\}; \tau^{w_k}_{s_k}<\infty,1\le k\le m \}.$$

Suppose $E_*^\tau$ occurs.  Define $\til z_j,D_j,\til D_j,\til r_j,\til w_k,E_k,\til E_k,\til s_k$ as in the previous proof.
By DMP of chordal SLE$_\kappa$ and Proposition \ref{lower-upper}, we see that (\ref{EE0}) also holds here.

The estimates (\ref{mE1},\ref{jD1},\ref{kE1}) still hold here by the same extremal length argument. Estimate (\ref{jD1'}) should be replaced by
\BGE 1\wedge \frac{\til r_{j_0-1}}{|\til z_{j_0-1}|\wedge |\til z_{j_0-1}-\til z_{j_0}|} \lesssim \frac{r_{j_0-1} h }{|z_{j_0-1}-z_{j_0}|^2 } \le  \frac{r_{j_0-1} h }{L_-^2 } . \label{jD1'-I}\EDE
When $j_0=n$, combining (\ref{mE1},\ref{jD1},\ref{kE1},\ref{jD1'-I}) with (\ref{EE0}), we get \begin{align*} \PP[E_\#|\F_\tau,E_*^\tau]\lesssim & \Big(\frac{h r_{j_0-1}}{L_-^2}\Big)^\alpha \Big(\frac{s_m}{|w_m |}\Big)^\alpha \cdot \prod_{j=1}^{j_0-2} \Big(\frac{r_j}{|z_j-z_{j+1}|}\Big)^\alpha \cdot \prod_{k=1}^{m-1} \Big(\frac{s_k}{|w_k-w_{k+1}|\wedge|w_k|}\Big)^\alpha
\end{align*}
Taking expectation, we then get (\ref{ping-pong-ineq-I}) in the case $j_0=n$.

Suppose $2\le j_0\le n-1$. Let $R_j$, $j_0+2\le j\le n$, be as in the proof of Lemma \ref{ping-pong}. We redefine $R_{j_0+1}=(|z_{j_0+1}-z_{j_0+2}|\wedge L_+)/2$.
For each $\ulin k=(k_{j_0+1},\dots,k_n)\in (\N\cup\{0\})^{\N_{(j_0,n]}}$, let $E_{\ulin k}$ be defined as in the proof of Lemma \ref{ping-pong} using the $R_j$, $j_0+1\le j\le n$ defined here. By Remark \ref{after-inner-last},
 \BGE \PP[E_{\ulin k}]\lesssim   \prod_{j=j_0+1}^n  e^{-2\alpha k_j}.\label{PEk-after}\EDE
 This inequality holds no matter whether all $k_j$'s are zero or not.
On the event $E_{\ulin k}\cap E_*$, the same extremal length argument shows that
\BGE 1\wedge \frac{\til r_j}{|\til z_j-\til z_{j-1}|}\lesssim   \frac{r_j}{R_j e^{-k_j}},\quad j_0+1\le j\le n.\label{j-D=>-after}\EDE

Combining (\ref{EE0},\ref{mE1},\ref{jD1},\ref{kE1},\ref{jD1'-I})  with (\ref{PEk-after},\ref{j-D=>-after}) we get
\begin{align*} \PP[E\cap E_{\ulin k}]\lesssim & \Big(\frac{h r_{j_0-1}}{L_-^2}\Big)^\alpha  \prod_{j=1}^{j_0-2} \Big(\frac{r_j}{|z_j-z_{j+1}|}\Big)^\alpha \cdot \prod_{j=j_0+1}^n \Big( \frac{r_j}{R_j }\Big)^\alpha \\
\cdot & \Big(\frac{s_m}{|w_m |}\Big)^\alpha \cdot  \prod_{k=1}^{m-1} \Big(\frac{s_k}{|w_k-w_{k+1}|\wedge|w_k|}\Big)^\alpha \cdot  \prod_{j=j_0+1}^n  e^{-\alpha k_j} 
\end{align*}
Summing up the above inequality over $\ulin k\in (\N\cup\{0\})^{\N_{(j_0,n]}}$, we get (\ref{ping-pong-ineq-I}) for $j_0<n$.
\end{proof}

\begin{Definition}
Recall the $\Sigma_n$, $n\in\N$, defined in Theorem \ref{Main-thm}. For  $\ulin z\in \Sigma_n$ and $j_0\in\N_n$, we say that $z_{j_0}$ is an innermost component of $\ulin z$ if there is no $k\in\N_n\sem \{j_0\}$ such that $z_k$ lies strictly between $0$ and $z_{j_0}$. An element $\ulin z\in\Sigma_n$ may have one or two innermost components. For $\ulin z=(z_1,\dots,z_n)\in \Sigma_n$, we define the inner distance of $\ulin z$ by $d(\ulin z):=\min\{|z_j-z_k|:0\le j<k\le n\}$, where $z_0:=0$. \label{Definition-Sigma}
\end{Definition}

\begin{Lemma}
Let $\ulin z^*=(z_1^*,\dots,z_n^*)\in\Sigma_n$.  Suppose that $z_1^*$ is an innermost component of $\ulin z^*$. Then for any $\eps>0$, there are $\delta\in(0,d(z^*)/3]$ and an $\HH$-hull $H$ (depending on $\ulin z^*$ and $\eps$) such that
 \begin{itemize}
   \item $\{z\in\HH:|z-z_1^*|\le 3\delta\}\subset H$;
   \item $\dist(z_j^*,H)\ge 3\delta$, $2\le j\le n$; and
   \item if $\ulin z\in\Sigma_n$ and $\ulin r\in (0,\infty)^n$ satisfy $\Vert z-z^*\Vert_\infty\le \delta$ and $\Vert \ulin r\Vert_\infty\le \delta$, then
\BGE \PP[K_{\tau^{z_1}_{r_1}} \not\subset H;\tau^{z_1}_{r_1}<\tau^{z_j}_{r_j} <\infty,2\le j\le n]<\eps  \prod_{j=1}^n r_j^\alpha .\label{compact-lem-ineq}\EDE
\end{itemize}
\label{compact-lem}
\end{Lemma}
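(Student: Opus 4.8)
The plan is to localize $\gamma$ up to the time $\tau^{z_1}_{r_1}$ inside a large $\HH$-hull $H$ and then show that the event in (\ref{compact-lem-ineq}) forces $\gamma$, before $\tau^{z_1}_{r_1}$, to do one of three atypical things: travel a macroscopic distance, approach some $z_j^*$ with $j\ge 2$ to within $O(\delta)$, or approach to within $O(\delta)$ a fixed compact set that stays a fixed distance from every marked point. By the left--right symmetry of chordal SLE we may assume $z_1^*>0$, and since $z_1^*$ is innermost, $(0,z_1^*)$ contains no marked point. Fix a large $\ell$ and a small $\delta>0$ (both chosen at the end). Let $H$ be an $\HH$-hull with $H\subset\{|z|<\ell\}$, $\{z\in\HH:|z-z_1^*|\le 3\delta\}\subset H$ and $\dist(z_j^*,H)\ge 3\delta$ for $2\le j\le n$, such that $\HH\setminus H$ is a neighbourhood of $\infty$ together with thin ``fingers'' joining it to a $3\delta$-cap around each $z_j^*$, $j\ge 2$; the fingers are routed so that, inside $\{|z|<\ell/2\}$, every part of $\HH\setminus H$ lying within a fixed distance $c_0:=d(z^*)/10$ of some marked point in fact lies within $O(\delta)$ of that point, while the remaining parts of the fingers are contained in finitely many $\delta$-independent compact sets $A_1,\dots,A_N$, each at distance $\ge c_0$ from all marked points. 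Such an $H$ exists because the marked points are $d(z^*)$-separated and $\delta$ is small (overlapping fingers do not destroy simple connectivity of $\HH\setminus H$). After shrinking $\delta$, the first two bullets of the lemma hold, and for all $\ulin z,\ulin r$ with $\Vert z-z^*\Vert_\infty\le\delta$ and $\Vert\ulin r\Vert_\infty\le\delta$ the hypotheses of the estimates used below (e.g.\ $r_j<R_j/8$, $|z_{j_0}-z_1|\wedge|z_{j_0}|\ge d(z^*)/2$) are satisfied.

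The decisive reduction uses only that $H$ is an $\HH$-hull: then $\HH\setminus H$ is connected and unbounded, so if $\gamma[0,t]\cap\HH\subset H$ then $\HH\setminus H\subset\HH\setminus\gamma[0,t]$ is contained in the unbounded component of $\HH\setminus\gamma[0,t]$, which equals $\HH\setminus K_t$; hence $K_t\subset H$. Taking $t=\tau^{z_1}_{r_1}$ and the contrapositive, on the event in (\ref{compact-lem-ineq}) the curve meets $\HH\setminus H$ before $\tau^{z_1}_{r_1}$, so at least one of the following occurs: (a) $\tau^\infty_{\ell/2}<\tau^{z_1}_{r_1}$; (b) $\tau^{z_{j_0}}_{C_0\delta}<\tau^{z_1}_{r_1}$ for some $j_0\in\{2,\dots,n\}$ and an absolute constant $C_0$; (c) $\dist(A_i,\gamma[0,\tau^{z_1}_{r_1}])\le\delta$ for some $i$. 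It suffices to bound each of these, intersected with the event in (\ref{compact-lem-ineq}), by $\tfrac\epsilon3\prod_{j=1}^n r_j^\alpha$, with constants uniform for $\ulin z$ in a fixed neighbourhood of $\ulin z^*$.

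Event (a) is contained in $\{\tau^\infty_{\ell/2}<\tau^{z_j}_{r_j}<\infty,\ 1\le j\le n\}$ with $z_1$ the innermost marked point on its side of $0$, so splitting the marked points by sign and applying Lemma \ref{Lemma-PR} with $R=\ell/2$ bounds it by $\lesssim\ell^{-\alpha}F(z_1,\dots,z_n)\prod_j r_j^\alpha$, and $F(\ulin z)\asymp F(\ulin z^*)$ for $\ulin z$ near $\ulin z^*$, so choosing $\ell$ large makes this $<\tfrac\epsilon3\prod_j r_j^\alpha$. For Event (b), on the event in (\ref{compact-lem-ineq}) the point $z_{j_0}$ is reached after $\tau^{z_1}_{r_1}$, hence is not enclosed by $\gamma[0,\tau^{z_1}_{r_1}]$, so the approach of $K_{\tau^{z_1}_{r_1}}$ to $z_{j_0}$ is witnessed by the curve and $\tau^{z_{j_0}}_{C_0\delta}<\tau^{z_1}_{r_1}$ forces $\dist(z_{j_0},\gamma[0,\tau^{z_1}_{r_1}])\le C_0\delta$; Proposition \ref{RZ-Thm3.1} with $k_0=j_0$ and $s_0=C_0\delta$ then bounds this case by $\lesssim F(z_1,\dots,z_n)\prod_j r_j^\alpha\big(C_0\delta/(|z_{j_0}-z_1|\wedge|z_{j_0}|)\big)^{\beta}\lesssim\delta^{\beta}\prod_j r_j^\alpha$ uniformly, and summing over $j_0$ and taking $\delta$ small gives $<\tfrac\epsilon3\prod_j r_j^\alpha$. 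For Event (c), each $A_i$ sits at distance $\ge c_0$ from all marked points and lies in a $\delta$-independent neighbourhood of a real interval strictly between two consecutive marked points (or beyond the outermost one); decomposing and applying the interval forms of the multi-point upper bound (Lemma \ref{ping-pong-I} and Remark \ref{after-inner-last}, with Proposition \ref{lower-upper}), for which the relevant $L_\pm$ are $\ge c_0$ and hence bounded below uniformly in $\delta$, yields a bound $\lesssim\delta^{\beta'}\prod_j r_j^\alpha$ with $\beta'>0$, again $<\tfrac\epsilon3\prod_j r_j^\alpha$ for $\delta$ small. Adding the three estimates proves (\ref{compact-lem-ineq}); for $n=1$ only Event (a) can arise.

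I expect the main obstacle to be the geometric bookkeeping that makes this decomposition valid. Each finger must simultaneously fatten to a $3\delta$-cap at its own $z_j^*$, stay thin away from the caps, keep its ``outer'' portion a fixed distance $c_0$ from all marked points, and leave $\HH\setminus H$ simply connected; reconciling ``the finger must cover $z_j^*$'' with ``the finger must not come $\delta$-close to any other marked point'' is exactly what forces the caps, the separated sets $A_i$, and the multi-scale handling of Event (c), and is the one place where the delicate interval estimates $(\ref{ping-pong-ineq-I})$ and Remark \ref{after-inner-last} — rather than merely the one-point and multi-point bounds — are genuinely needed. Once $H$ has been constructed, the three probability bounds above are routine consequences of results already established.
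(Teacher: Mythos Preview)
Your proposal is correct and follows essentially the same route as the paper's proof: the paper constructs $H$ explicitly as the large semi-disc $\{|z|\le R\}$ with semi-discs of radius $r$ around each $z_j^*$, $j\ge 2$, removed, together with thin rectangles $I_j\times[0,h]$ and $J_k\times[0,h]$ (your ``fingers'') connecting these semi-discs to $\{|z|>R\}$, and then bounds exactly your three events (a), (b), (c) using Lemma~\ref{Lemma-PR}, Proposition~\ref{RZ-Thm3.1}, and Lemma~\ref{ping-pong-I} respectively. The only presentational differences are that the paper separates the three scale parameters $R$, $r$, $h$ (you collapse them into $\ell$ and $\delta$) and leaves implicit the reduction ``$K_t\not\subset H\Rightarrow\gamma[0,t]\cap(\HH\setminus H)\ne\emptyset$'' that you spell out.
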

\begin{proof}
For $\ulin r=(r_1,\dots,r_n)\in(0,\infty)^n$, let $P(\ulin r)=\prod_{j=1}^n r_j$. Fix a chordal SLE$_\kappa$ curve $\gamma$ in $\HH$ from $0$ to $\infty$. For $\ulin z=(z_1,\dots,z_n)\in\Sigma_n$, $\ulin r=(r_1,\dots,r_n)\in(0,\infty)^n$, and  $S\subset\HH$, let
$$E^{\ulin z}_{\ulin r;S}=\{\tau^{z_1}_{r_1}<\tau^{z_j}_{r_j} <\infty,2\le j\le n;K_{\tau^{z_1}_{r_1}}\cap S\ne \emptyset \} .$$
Then 
(\ref{compact-lem-ineq}) can be rewritten as $\PP[ E^{\ulin z}_{\ulin r;\HH\sem H}]< \eps P(\ulin r)$.

By Lemma \ref{Lemma-PR}, there is a positive continuous functions  $F_\infty$ on $\Sigma_n$ such that, for any $\ulin z\in\Sigma_n$ and any $\ulin r\in(0,\infty)^n$,
\BGE \PP[E^{\ulin z}_{\ulin r; \{z\in\HH:|z|\ge R\}}]\le F_\infty(\ulin z)R^{-\alpha}  P(\ulin r),\quad \mbox{if }\Vert \ulin r\Vert_\infty<d(\ulin z)/2\mbox{ and } R\ge 2\max\{|z_k|\}.\label{R-infty}\EDE
By Proposition \ref{RZ-Thm3.1}, for any $2\le k\le n$, there are a constant $\beta>0$ and a positive continuous function $F_k$ on $\Sigma_n$ such that, for any $\ulin z\in\Sigma_n$, $\ulin r\in(0,\infty)^n$, and $r>0$,
\BGE \PP[E^{\ulin z}_{\ulin r; \{z\in\HH:|z-z_k|\le r\}}]\le  F_k(\ulin z) r^\beta P(\ulin r),\quad \mbox{if }\Vert \ulin r\Vert_\infty<d(\ulin z)/8.\label{rk}\EDE


Note that, if $\Vert \ulin z-\ulin z^*\Vert_\infty\le d(\ulin z^*)/4$, then $d(\ulin z)\ge d(\ulin z^*)/2$ and $\max\{|z_j|\}\le 2 \max\{|z_j^*|\}$.
By (\ref{R-infty},\ref{rk}) and the continuity of $F_\infty$ and $F_k$, $2\le k\le n$,  there are $R>4\max\{|z_k^*|\}$ and $r\in (0,d(\ulin z^*)/3)$ such that if  $\Vert \ulin z-\ulin z^*\Vert_\infty\le d(\ulin z^*)/4$, and $\Vert \ulin r\Vert_\infty<d(\ulin z^*)/16$, then
$$ \PP[E^{\ulin z}_{\ulin r; \{z\in\HH:|z|\ge R\}\cup \bigcup_{k=2}^n\{z\in\HH:|z-z_k|\le r\}  }] <\frac \eps 2 P[\ulin r]. $$ 
We further assume that $\Vert \ulin z-\ulin z^*\Vert_\infty\le r/2$. Then $\{z\in\HH:|z-z_k^*|\le r/2\}\subset \{z\in\HH:|z-z_k|\le r\}$ for $2\le k\le n$, which implies by the above formula that
 \BGE \PP[E^{\ulin z}_{\ulin r; \{z\in\HH:|z|\ge R\}\cup \bigcup_{k=2}^n\{z\in\HH:|z-z_k^*|\le r/2\}  }] <\frac \eps 2 P[\ulin r],\quad \mbox{if }\Vert \ulin r\Vert_\infty<d(\ulin z^*)/16. \label{Rinftyrk}\EDE
Since $R>2\max\{|z_k|\}$ and $r<d(\ulin z^*)/3$, the semi-discs  $\{z\in\HH:|z-z_j^*|\le r\}$, $1\le j\le n$, are mutually disjoint, and are all contained in the semi-disc $\{z\in\HH:|z|\le R\}$.

By symmetry, we assume that $z_1^*>0$. We relabel the components of $\ulin z^*$ by $z_j^*$, $1\le j\le n'$, and $w_k^*$, $1\le k\le m'$, where $n'\ge 1$, $m'\ge 0$, and $n'+m'=n$, such that  $w_{m'}^*<\cdots<w_1^*<0<z_1^*<\dots<z_{n'}^*$. After relabeling, the symbol $z_1^*$ still refers to the same point. Correspondingly, we relabel the components of every $\ulin z\in\Sigma_n$ and $\ulin r\in(0,\infty)^n$  by $z_j$, $1\le j\le n'$, $w_k$, $1\le k\le m'$, $r_j$, $1\le j\le n'$, and $s_k$, $1\le k\le m'$. It is clear that, if  $\Vert \ulin z-\ulin z^*\Vert_\infty<d(z^*)/2$, then  $w_{m'}<\cdots<w_1<0<z_1<\dots<z_{n'}$, and so $z_1$ is an innermost component of $\ulin z$.

Define compact intervals $I_j$, $2\le j\le n$, and $J_k$, $1\le k\le m$, as follows. If $n'=1$, we do not define $I_j$'s. If $n'\ge 2$, let $I_{n'}=[z_{n'}^*+r/2,R ]$, and $I_j=[z_j^*+r/2,z_{j+1}^*-r/2]$, $2\le j\le n'-1$. If $m=0$, we do not define $J_k$'s. If $m\ge 1$, let $J_{m'}=[-R ,w_{m'}^*-r/2]$, and $J_k=[w_{k+1}^*+r/2,w_k^*-r/2]$, $1\le k\le m'-1$. 

If $\Vert \ulin z-\ulin z^*\Vert_\infty\le r/4$, then the distance from every component of $\ulin z$ to every interval $I_j$ or $J_k$ is at least $r/4$. By Lemma \ref{ping-pong-I},  there are continuous functions $F_{I_j}$, $2\le j\le n'$, and $F_{J_k}$, $1\le k\le m'$, defined on the set of $\ulin z\in\Sigma_n$ with $\Vert \ulin z-\ulin z^*\Vert_\infty\le r/4$, such that, if $\Vert \ulin z-\ulin z^*\Vert_\infty\le r/4$, $\Vert \ulin r\Vert_\infty< r/8$, and $h<r/8$, then for each $2\le j\le n'$ and $1\le k\le m'$,
$$ \PP[E^{\ulin z}_{\ulin r; I_j\times [0,h]  }]< F_{I_j}(\ulin z) h^\alpha P(\ulin r),\quad \PP[E^{\ulin z}_{\ulin r; J_k\times [0,h]  }]< F_{J_k}(\ulin z) h^\alpha P(\ulin r).$$
Thus, there is $h>0$ such that, if $\Vert \ulin z-\ulin z^*\Vert_\infty\le r/4$ and $\Vert \ulin r\Vert_\infty\le r/8$, then
 \BGE \PP[E^{\ulin z}_{\ulin r; I_j\times [0,h]  }],\PP[E^{\ulin z}_{\ulin r; J_k\times [0,h]  }] <\frac \eps {2n} P(\ulin r),\quad 2\le j\le n',\quad 1\le k\le m'. \label{IJh}\EDE

 Let $$H=\{z\in\HH:|z|\le R\}\sem \bigcup_{j=2}^{n'}(\{|z-z_j^*|\le r\}\cup I_j\times [0,h])\sem \bigcup_{k=1}^{m'} (\{|z-w_k^*|\le r\}\cup J_k\times [0,h]).$$
See Figure \ref{H}. Then $H$ is an $\HH$-hull, which contains $\{z\in\HH:|z-z_1^*|\le r\}$, and the distance from each of $z_2^*,\dots,z_{n'}^*$ and $w_1^*,\dots,w_{m'}^*$ to $H$ is at least $r$.  Combining (\ref{Rinftyrk}) and (\ref{IJh}), we get  $\PP[ E^{\ulin z}_{\ulin r;\HH\sem H}]< \eps P(\ulin r)$ if $\Vert \ulin z-\ulin z^*\Vert_\infty\le r/4$, and $\Vert \ulin r\Vert_\infty\le r/16$. So we find that (\ref{compact-lem-ineq}) holds for such $H$ and $\delta:=r/16$.
\end{proof}

\begin{figure}
\centering
\includegraphics[width=1\textwidth]{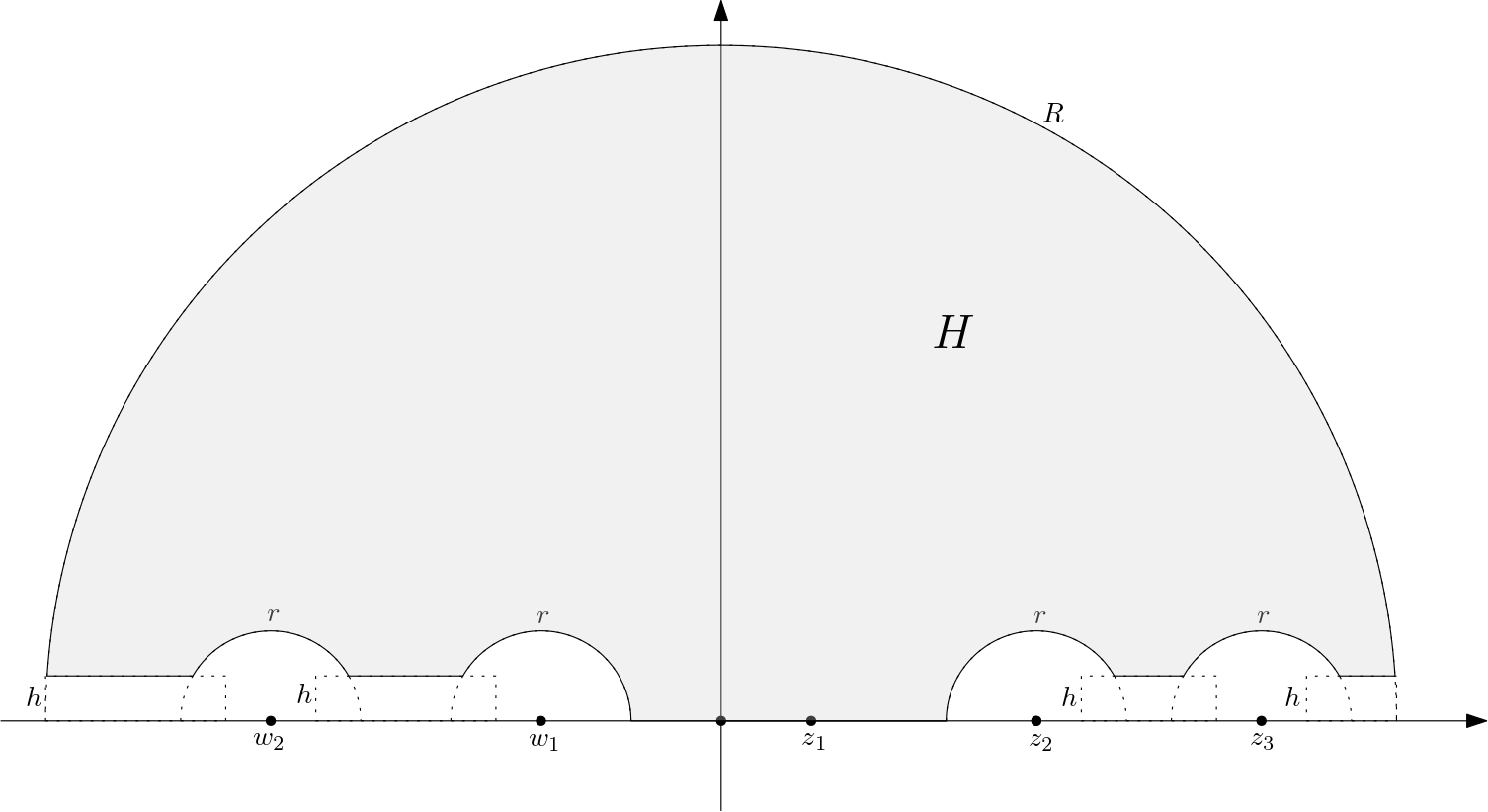}
\caption[A figure for the proof of Lemma \ref{compact-lem}]{\textbf{A figure for the proof of Lemma \ref{compact-lem}.} This figure illustrates the construction of the $\HH$-hull $H$ in the proof of Lemma \ref{compact-lem} in the case that $n'=3$ and $m'=2$.  The $\HH$-hull $H$ (the shaded region) is obtained by removing    small discs of radius $r$ centered at $z_2,z_3,w_1,w_2$ and $4$ rectangles with real interval bases and height $h$ from the big semi-disc $\{z\in\HH:|z|\le R\}$. }
\label{H}
\end{figure}

\section{Proof of the Main Theorem}\label{Chap-main-thm}
We will finish the proof of Theorem \ref{Main-thm} in this section. Recall that  $\PP$  denotes the law of a chordal SLE$_\kappa$ curve in $\HH$ from $0$ to $\infty$; and for $z\in\R\sem\{0\}$ and $r>0$, $\PP_z^*$ denotes the law of a two-sided chordal SLE$_\kappa$ curve in $\HH$ from $0$ to $\infty$ passing through $z$, and $\PP_z^r$ denotes the conditional law $\PP[\cdot|\tau^z_r<\infty]$.

We will use an induction on $n$. By (\ref{G(z)-approx}), Theorem \ref{Main-thm} holds for $n=1$. Let $n\ge 2$. We make the induction hypothesis that Theorem \ref{Main-thm} holds for $n-1$. For any $\ulin w=(w_1,\dots,w_{n-1})\in \Sigma_{n-1}$ (Definition  and $\ulin s=(s_1,\dots,s_{n-1})\in (0,\infty)^{n-1}$, we define
\BGE G(\ulin w,\ulin s)=\PP[\tau^{w_j}_{s_j}<\infty,1\le j\le n].\label{Gws}\EDE
By the induction hypothesis,
$\lim_{s_1,\dots,s_{n-1}\to 0^+} \prod_{j=1}^{n-1} s_j ^{-\alpha} G(\ulin w,\ulin s)=G(\ulin w)$.

Given a chordal Loewner curve $\gamma$ with the corresponding centered Loewner maps $Z_t$'s, we define a family of functions $G^\gamma_t$, $t\ge 0$, on $\Sigma_{n-1}$ associated with $\gamma$ by
\BGE
  G_t^\gamma(z_2,\dots,z_{n})=\left\{\begin{array}{ll}
 \prod_{j=2}^{n} |Z_t'(z_j)|^{\alpha}  G(Z_t(z_2) ,\dots,Z_t(z_{n})),&\mbox{if }t<\tau^*_{z_j}, 2\le j\le n;\\
 0,&\mbox{otherwise.}
 \end{array}
 \right.
\label{vvGt}
\EDE
When $\gamma$ is a random Loewner curve, $G_t^\gamma$ are random functions.  We use
$\EE_{z_1}^*[G_{T_{z_1}}(\cdot)]$
to denote the expectation of $G^\gamma_t(\cdot)$ when $\gamma$ follows the law $\PP^*_{z_1}$, and $t=T_{z_1}$.

Following the approach in \cite{LW}, we will prove that for any $1\le j_0\le n$ and $\ulin z=(z_1,\dots,z_n)\in \Sigma_n$, the following limit exists and is finite:
\BGE G^{j_0}(\ulin z):=\lim_{r_1,\dots,r_n\to 0^+} \prod_{j=1}^n r_j^{-\alpha} \PP[\tau^{z_{j_0}}_{r_{j_0}}<\tau^{z_k}_{r_k}<\infty,\forall k\in\N_n\sem\{j_0\}].\label{ordered}\EDE
It is clear that if the above limit exists and is finite for any $1\le j_0\le n$, then the same is true for the limit in (\ref{n-pt Green}), and we have
\BGE G(\ulin z)=\sum_{j=1}^n   G^j(\ulin z).\label{G-sum}\EDE
In this section we will prove the following theorem.

\begin{Theorem}
Given the induction hypothesis, for any $1\le j_0\le n$, the limit  in (\ref{ordered})  converges uniformly on any compact subset of $\Sigma_n$, and the limit function $G^{j_0}$ is continuous on $\Sigma_n$. Moreover, we have
\BGE  G^{j_0} (z_1,\dots,z_n)=G(z_{j_0})\EE_{z_{j_0}}^*[ G_{T_{z_{j_0}}}(z_1,\dots,\ha z_{j_0},\dots,z_n)],\label{induction}\EDE
 where the symbol $\ha z_{j_0}$ means that $z_{j_0}$ is omitted in the list from $z_1$ to $z_n$.
\label{main}
\end{Theorem}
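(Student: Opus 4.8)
The plan is to fix $j_0$ and, by the left-right symmetry of chordal SLE, assume $z_{j_0} > 0$; to lighten notation write $z = z_{j_0}$ and let $(w_1,\dots,w_{n-1})$ be the list $(z_1,\dots,\ha z_{j_0},\dots,z_n)$. The strategy, following \cite{LW,existence}, is to decompose the event in (\ref{ordered}) according to the behavior of $\gamma$ near $z$ and use a change-of-measure argument to pass from $\PP$ conditioned to hit a small disk about $z$ (the law $\PP^r_z$) to the two-sided law $\PP^*_z$. Concretely, I would write
\[
\PP[\tau^{z}_{r_{j_0}}<\tau^{w_k}_{s_k}<\infty,\ 1\le k\le n-1] = \PP[\tau^z_{r_{j_0}}<\infty]\cdot \EE^{r_{j_0}}_z\big[\mathbf 1\{\tau^z_{r_{j_0}}<\tau^{w_k}_{s_k}<\infty,\ \forall k\}\big].
\]
By (\ref{1pt}), $\PP[\tau^z_{r_{j_0}}<\infty]=G(z)r_{j_0}^\alpha(1+o(1))$ as $r_{j_0}\to0^+$ (indeed with the explicit rate from (\ref{G(z)-approx})), so it remains to show
\[
\prod_{k=1}^{n-1}s_k^{-\alpha}\,\EE^{r_{j_0}}_z\big[\mathbf 1\{\tau^z_{r_{j_0}}<\tau^{w_k}_{s_k}<\infty,\ \forall k\}\big]\ \longrightarrow\ \EE^*_z[G_{T_z}(w_1,\dots,w_{n-1})]
\]
uniformly on compacts. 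The engine for this is the Domain Markov Property: on the event $\{\tau^z_{r_{j_0}}<\infty\}$, conditionally on $\F_{\tau^z_{r_{j_0}}}$ the remaining curve $\gamma(\tau^z_{r_{j_0}}+\cdot)$ is a chordal SLE$_\kappa$ in $H_{\tau^z_{r_{j_0}}}$ from $\gamma(\tau^z_{r_{j_0}})$ to $\infty$; conformally mapping by $Z_{\tau^z_{r_{j_0}}}$ and applying the definition (\ref{Gws}) of $G(\ulin w,\ulin s)$ together with conformal covariance gives
\[
\PP[\tau^{w_k}_{s_k}<\infty,\ \forall k\mid \F_{\tau^z_{r_{j_0}}}] \approx G^\gamma_{\tau^z_{r_{j_0}}}(w_1,\dots,w_{n-1})\prod_k s_k^\alpha,
\]
with an error controlled by the induction hypothesis (uniform convergence in (\ref{n-pt Green}) for $n-1$ points), valid once the $s_k$ are small relative to the distances from the $w_k$ to $K_{\tau^z_{r_{j_0}}}$. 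Thus after dividing by $\prod_k s_k^\alpha$ and letting $s_1,\dots,s_{n-1}\to0^+$ we reduce to showing
\[
\EE^{r_{j_0}}_z\big[G^\gamma_{\tau^z_{r_{j_0}}}(w_1,\dots,w_{n-1})\big]\ \longrightarrow\ \EE^*_z\big[G^\gamma_{T_z}(w_1,\dots,w_{n-1})\big]\qquad (r_{j_0}\to 0^+).
\]

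For this last convergence I would invoke Propositions \ref{Prop2.13} and \ref{stayin}. Fix an auxiliary scale $\eta$ with $r_{j_0}\ll\eta\ll R_{j_0}$ (where $R_{j_0}=\min_{k}|z-w_k|\wedge|z|$). Proposition \ref{Prop2.13} says $\PP^{r_{j_0}}_z$ and $\PP^*_z$, restricted to $\F_{\tau^z_\eta}$, are mutually absolutely continuous with Radon--Nikodym derivative within $\exp(O((r_{j_0}/\eta)^\beta))$ of $1$. The quantity $G^\gamma_{\tau^z_{r_{j_0}}}(\underline w)$ is not $\F_{\tau^z_\eta}$-measurable, but it is close to $G^\gamma_{\tau^z_\eta}(\underline w)$: both are obtained from $Z_t(w_k)$ and $Z_t'(w_k)$, and on the event $E_{r_{j_0},0;R}$ of Proposition \ref{stayin} (whose complement has $\PP^*_z$- and $\PP^{r_{j_0}}_z$-probability $\lesssim (r_{j_0}/R)^\alpha$ for a suitable $R\asymp R_{j_0}$) the hull $K_{\tau^z_{r_{j_0}}}/K_{\tau^z_\eta}$ has $\rad\lesssim r_{j_0}$ near $Z_{\tau^z_\eta}(z)$, so by Proposition \ref{centered-Delta} and the distortion estimates (\ref{1})--(\ref{3}) of Proposition \ref{small} we have $|Z_{\tau^z_{r_{j_0}}}(w_k)-Z_{\tau^z_\eta}(w_k)|\lesssim r_{j_0}$ and $|Z_{\tau^z_{r_{j_0}}}'(w_k)/Z_{\tau^z_\eta}'(w_k)-1|\lesssim (r_{j_0}/\eta)^2$, while $|Z_{\tau^z_\eta}(w_k)|\gtrsim |z-w_k|\wedge|z|$ by the fact that $w_k\notin K_{\tau^z_\eta}$ and a Koebe estimate as in (\ref{gz-Z}). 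Continuity of $G$ (from the induction hypothesis) together with these bounds gives $|G^\gamma_{\tau^z_{r_{j_0}}}(\underline w)-G^\gamma_{\tau^z_\eta}(\underline w)|\to 0$ on the good event. Putting the three pieces together — RN derivative near $1$, the good events having high probability, and $G^\gamma_{\tau^z_{r_{j_0}}}\approx G^\gamma_{\tau^z_\eta}$ — and then sending first $r_{j_0}\to 0$ and afterwards $\eta\to 0$, yields the claimed convergence to $\EE^*_z[G^\gamma_{T_z}(\underline w)]$; note $G^\gamma_{\tau^z_\eta}\to G^\gamma_{T_z}$ under $\PP^*_z$ as $\eta\to 0$ since $\tau^z_\eta\uparrow T_z$ and the curve is a.s.\ continuous.

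Two further points must be handled. First, \emph{finiteness and positivity of the limit}: the upper bound from Proposition \ref{lower-upper} shows all the probabilities in (\ref{ordered}) are $\lesssim \prod_j r_j^\alpha$, so $G^{j_0}(\underline z)<\infty$; and one must also argue $\EE^*_z[G_{T_z}(\underline w)]\in(0,\infty)$, the upper bound again coming from Proposition \ref{lower-upper} (applied inside $H_{T_z}$) and the strict positivity from the lower bound there together with the fact that $\PP^*_z$-a.s.\ the point $T_z$ is finite and the $w_k$ are not swallowed at time $T_z$. Second, \emph{uniformity on compacts and continuity of $G^{j_0}$}: all the error terms above — the $o(1)$ in (\ref{1pt})/(\ref{G(z)-approx}), the $(r/\eta)^\beta$ of Proposition \ref{Prop2.13}, the $(r/R)^\alpha$ of Proposition \ref{stayin}, and the induction-hypothesis error in (\ref{Gws}) — are uniform once $\underline z$ ranges over a compact $K\subset\Sigma_n$, because $d(\underline z)$, $\min_k(|z_{j_0}-z_k|\wedge|z_{j_0}|)$ and $\max_k|z_k|$ are then bounded away from $0$ and $\infty$; this gives uniform convergence, and continuity of $G^{j_0}$ follows from continuity of the limiting expression (\ref{induction}) in $\underline z$, which in turn reduces to continuity of $z\mapsto G(z)=\ha c|z|^{-\alpha}$ and of $\underline z\mapsto\EE^*_{z_{j_0}}[G_{T_{z_{j_0}}}(\underline w)]$; the latter can be obtained either from the uniform convergence just established (a uniform limit of continuous approximants, the approximants being continuous by continuity of $\PP[\,\cdot\,]$ in the reference points) or directly. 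The main obstacle, as in \cite{existence}, is the \emph{uniform} control of the gap between $G^\gamma_{\tau^z_{r_{j_0}}}(\underline w)$ and its $\F_{\tau^z_\eta}$-measurable surrogate $G^\gamma_{\tau^z_\eta}(\underline w)$ — i.e.\ showing the derivative and image-point perturbations of the $w_k$ under the extra bit of Loewner flow are negligible, with bounds that do not degenerate near the boundary of $\Sigma_n$ — and threading the two limits $r_{j_0}\to0$ then $\eta\to0$ in the right order so that the absolute-continuity estimate of Proposition \ref{Prop2.13} can be applied.
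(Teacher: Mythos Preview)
Your overall architecture is correct and matches the paper's: DMP at time $\tau^{z}_{r_{j_0}}$, change of measure from $\PP^{r_{j_0}}_z$ to $\PP^*_z$ via Proposition~\ref{Prop2.13}, the ``stay-in'' estimate of Proposition~\ref{stayin} to localize the post-$\eta$ hull, and continuity of the $(n-1)$-point Green's function. But there is a genuine gap at one step.

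You assert that ``$|Z_{\tau^z_\eta}(w_k)|\gtrsim |z-w_k|\wedge|z|$ by the fact that $w_k\notin K_{\tau^z_\eta}$ and a Koebe estimate as in (\ref{gz-Z})''. This is false: nothing prevents $\gamma[0,\tau^z_\eta]$ from coming arbitrarily close to some $w_k$ (or, for $\kappa>4$, swallowing it), so $\dist(w_k,K_{\tau^z_\eta})$ and hence $|Z_{\tau^z_\eta}(w_k)|$ can be arbitrarily small. When that happens the random point $\ulin Z_{\tau}(\ulin w)$ leaves every compact subset of $\Sigma_{n-1}$, and you lose both ingredients you need: (i) the induction hypothesis only gives \emph{uniform} convergence of $\prod s_k^{-\alpha}\,\PP[\cdot\mid\F_\tau]$ to $G^\gamma_\tau(\ulin w)$ on compacts of $\Sigma_{n-1}$, so the DMP step ``$\approx G^\gamma_\tau(\ulin w)\prod s_k^\alpha$'' is not justified; (ii) the continuity of $G$ cannot be used to compare $G^\gamma_{\tau^z_{r_{j_0}}}$ with $G^\gamma_{\tau^z_\eta}$ without a uniform modulus. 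The same issue breaks your uniformity-on-compacts claim for the error terms.

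The paper closes this gap with Lemma~\ref{compact-lem}: for each $\ulin z^*$ and $\eps>0$ it builds an $\HH$-hull $H$ containing a disc about $z_1^*$ and at positive distance from $z_2^*,\dots,z_n^*$, such that $\PP[K_{\tau^{z_1}_{r_1}}\not\subset H;\ \tau^{z_1}_{r_1}<\tau^{z_j}_{r_j}<\infty,\,2\le j\le n]<\eps\prod r_j^\alpha$. On the good event $\{K_{\tau^{z_1}_{r_1}}\subset H\}$, compactness of the space of sub-hulls of $H$ (Proposition~\ref{compact-prop}) forces $\ulin Z_\tau(\ulin z')$ into a fixed compact $\Omega_H\subset\Sigma_{n-1}$ and the derivatives $|Z_\tau'(z_j)|$ into a compact $Q_H\subset(0,\infty)$; all your steps then go through uniformly. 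Proving Lemma~\ref{compact-lem} is the technical heart of the argument and consumes Lemmas~\ref{Lemma-PR}, \ref{inner-last}, \ref{ping-pong}, \ref{ping-pong-I} and Proposition~\ref{RZ-Thm3.1}. Relatedly, the paper first disposes of the case that $z_{j_0}$ is \emph{not} an innermost component (some $w_k$ lies strictly between $0$ and $z_{j_0}$): there $G_{T_{z_{j_0}}}(\ulin w)=0$ a.s., and one needs Lemma~\ref{ping-pong} separately to show the ordered probability is $o(\prod r_j^\alpha)$. Your remark that ``$w_k$ are not swallowed at time $T_z$'' is exactly wrong in this case.
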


It is clear that all statements of Theorem \ref{Main-thm} in the induction step except for $G\asymp F$ follow from Theorem \ref{main} and (\ref{G-sum}). When we have the existence of $G$ on $\Sigma_n$, the statement $G\asymp F$ then follows immediately from Proposition \ref{lower-upper} by sending $r_1,\dots,r_n$ to $0^+$.

After proving Theorem \ref{main}, we get a local martingale related to the Green's function.

\begin{Corollary} \label{martingale}
For any fixed $\ulin z=(z_1,\dots,z_n)\in \Sigma_n$, the process
$t\mapsto  G^\gamma_t(\ulin z)$ associated with a chordal SLE$_\kappa$ curve $\gamma$ in $\HH$ from $0$ to $\infty$ is a local martingale up to $\tau:=\min\{\tau^*_{z_j},1\le j\le n\}$.
\end{Corollary}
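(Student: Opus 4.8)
The plan is to realize $t\mapsto G^\gamma_t(\ulin z)$, after a localization, as a bounded $L^1$-limit of the genuine bounded martingales
\[
M^{\ulin r}_t:=\PP[\tau^{z_j}_{r_j}<\infty,\ 1\le j\le n\mid\F_t],
\]
normalized by $\prod_{j=1}^n r_j^{-\alpha}$, letting the radii $r_j\to 0^+$, and to use Theorem \ref{main} (equivalently the uniform convergence in (\ref{n-pt Green}) for $n$ points, which follows from Theorem \ref{main} and (\ref{G-sum})) to identify the limit. Recall from (\ref{vvGt}), applied with $n$ points and with $G$ the $n$-point boundary Green's function (which exists by Theorem \ref{main}), that $G^\gamma_t(\ulin z)=\prod_{j=1}^n|Z_t'(z_j)|^\alpha\,G(Z_t(z_1),\dots,Z_t(z_n))$ for $t<\tau$ and $G^\gamma_t(\ulin z)=0$ otherwise; this process is adapted and is continuous on $[0,\tau)$.

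First I would localize. Fix $\ulin z\in\Sigma_n$ and pick $\F$-stopping times $\sigma_m\uparrow\tau$ such that on $[0,\sigma_m]$ one has $g_t'(z_j)\in[1/m,1]$ and $\dist(z_j,\overline{K_t})\ge 1/m$ for every $j$, $\rad_0(K_t)\le m$, and consequently $(Z_t(z_1),\dots,Z_t(z_n))$ stays in a fixed compact subset $\mathcal K_m\subset\Sigma_n$. This is possible by standard properties of the chordal Loewner flow (Proposition \ref{small}, Koebe's theorems, and conformal invariance of extremal distance) together with the fact that $g_t'(z_j)\to 0$ as $t\uparrow\tau^*_{z_j}$. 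It then suffices to show that, for each fixed $m$, the stopped process $t\mapsto G^\gamma_{t\wedge\sigma_m}(\ulin z)$ is a martingale.

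Fix $\ulin r=(r_1,\dots,r_n)$ with $0<r_j<1/m$ and write $D_j:=\{z\in\overline\HH:|z-z_j|\le r_j\}$. Since $M^{\ulin r}_t=\EE[\mathbf 1_{\{\tau^{z_j}_{r_j}<\infty,\ \forall j\}}\mid\F_t]$ is a $[0,1]$-valued, hence uniformly integrable, martingale, so is $M^{\ulin r}_{\cdot\wedge\sigma_m}$. On $[0,\sigma_m]$ no $D_j$ has been visited by $\gamma$ and $D_j\subset\overline{H_t}$, so by the DMP and the strong Markov property at $t\wedge\sigma_m$,
\[
M^{\ulin r}_{t\wedge\sigma_m}=\PP\big[\til\gamma\cap Z_{t\wedge\sigma_m}(D_j)\ne\emptyset,\ 1\le j\le n\ \big|\ \F_{t\wedge\sigma_m}\big],
\]
where $\til\gamma$ is an independent chordal SLE$_\kappa$ in $\HH$ from $0$ to $\infty$. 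By Koebe's distortion theorem, on $[0,\sigma_m]$ the distorted semi-disc $Z_t(D_j)$ is sandwiched between the semi-discs about $Z_t(z_j)$ of radii $g_t'(z_j)\,r_j\,(1-C_m r_j)$ and $g_t'(z_j)\,r_j\,(1+C_m r_j)$. Combining the monotonicity of $\ulin A\mapsto\PP[\til\gamma\cap A_j\ne\emptyset,\ \forall j]$ in each $A_j$, the up-to-constant upper bound of Proposition \ref{lower-upper}, and the uniform convergence in (\ref{n-pt Green}) over the compact configuration set $\mathcal K_m$, we obtain a constant $C'_m$ depending only on $\kappa,n,m$ with $\prod_j r_j^{-\alpha}M^{\ulin r}_{t\wedge\sigma_m}\le C'_m$ for all small $\ulin r$, and
\[
\prod_{j=1}^n r_j^{-\alpha}\,M^{\ulin r}_{t\wedge\sigma_m}\ \longrightarrow\ \prod_{j=1}^n|Z_{t\wedge\sigma_m}'(z_j)|^{\alpha}\,G\big(Z_{t\wedge\sigma_m}(z_1),\dots,Z_{t\wedge\sigma_m}(z_n)\big)=G^\gamma_{t\wedge\sigma_m}(\ulin z)
\]
pointwise (almost surely) as $\max_j r_j\to 0^+$.

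Finally, for $0\le s\le t$, the martingale identity $\EE[M^{\ulin r}_{t\wedge\sigma_m}\mid\F_s]=M^{\ulin r}_{s\wedge\sigma_m}$, multiplied by $\prod_j r_j^{-\alpha}$ and followed by the limit $\max_j r_j\to 0^+$ — the interchange being legitimate by the uniform bound $C'_m$ and dominated convergence for conditional expectations — yields $\EE[G^\gamma_{t\wedge\sigma_m}(\ulin z)\mid\F_s]=G^\gamma_{s\wedge\sigma_m}(\ulin z)$. Hence $G^\gamma_{\cdot\wedge\sigma_m}(\ulin z)$ is a martingale for each $m$, and since $\sigma_m\uparrow\tau$, the process $t\mapsto G^\gamma_t(\ulin z)$ is a local martingale up to $\tau$. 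I expect the main obstacle to be the third step: making the conformal-distortion control of $Z_t(D_j)$ and the uniform convergence of the normalized $n$-point hitting probabilities hold uniformly over $t\in[0,\sigma_m]$ — that is, over the compact family $\mathcal K_m$ of configurations and over the non-circular shapes $Z_t(D_j)$ — since this is precisely what validates interchanging $\lim_{\ulin r\to 0^+}$ with $\EE[\cdot\mid\F_s]$.
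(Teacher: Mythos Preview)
Your proof is correct and follows essentially the same strategy as the paper: localize, identify $G^\gamma_t(\ulin z)$ as the limit of the rescaled conditional hitting-probability martingales via DMP and Koebe distortion, and pass to the limit using compactness of the resulting configuration set. The paper localizes with $T_K=\inf\{t:\gamma[0,t]\not\subset\overline K\}$ for $\HH$-hulls $K$ avoiding the $z_j$ and invokes Proposition~\ref{compact-prop} directly to get the compact set $\mathcal K$, whereas you build explicit $\sigma_m$ and phrase the passage to the limit as dominated convergence rather than uniform convergence; these differences are cosmetic. One minor remark: your justification that $\sigma_m\uparrow\tau$ via ``$g_t'(z_j)\to 0$ as $t\uparrow\tau^*_{z_j}$'' is not needed (and is delicate for general $\kappa$); the condition $\dist(z_j,\overline{K_t})\ge 1/m$ together with $\rad_0(K_t)\le m$ already forces $\sigma_m\uparrow\tau$, so you can safely drop the derivative clause.
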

\begin{proof}
Fix $\ulin z=(z_1,\dots,z_n)\in \Sigma_n$ and let $M_t= G_t^\gamma(\ulin z)$.
	It suffices to prove that for any $\HH$-hull $K$, whose  closure does not contain any of $z_1,\dots,z_n$, $M_{\cdot\wedge T_K}$ is a martingale, where  $T_K:=\inf\{t>0:\gamma[0,t]\not\subset \lin K\}$.  The reason is that $\tau$ is the supremum of all such $T_K$.  To prove that $M_{\cdot\wedge T_K}$ is a martingale, we pick a small $r>0$, and consider the martingale
	$$M^{(r)}_t:=r^{-n\alpha }\PP[\tau^{z_j}_r< \infty,1\le j\le n|\F_{t}].$$
	By Theorem \ref{main}, DMP of chordal SLE and Koebe's distortion theorem, we have $M^{(r)}_t\to M_{t }$ on $[0,\tau)$ as $r\to 0^+$. We claim that the convergence is uniform on  $[0,T_K]$.  To see this, we apply Proposition \ref{compact-prop} to conclude that there exist an $\HH$-hull $H$ and $a<b\in(0,\infty)$ such that $(Z_t(z_1),\dots,Z_t(z_n))\in H$ and $a\le |Z_t'(z_j)|\le b$, $1\le j\le n$, for any $t\in[0,T_K]$. So we get the uniform convergence of $M^{(r)}_t\to M_{t }$ over $[0,T_K]$ by the uniform convergence of the $n$-point Green's function on the  $\HH$-hull in $H$. So the claim is proved, which then implies that   $M_{\cdot\wedge T_K}$ is a martingale, as desired.
\end{proof}

\begin{Remark}
  We may write $M_t=\prod_{j=1}^n|g_t'(z_j)|^{\alpha}  G(g_t(z_1)-U_t,\dots,g_t(z_n)-U_t)$.
If we know that $ G$ is $C^2$, then using It\^o's formula and Loewner's equation (\ref{chordal}), one can easily get the following second order PDE for $ G$:
$$\frac{\kappa}{2}\Big(\sum_{j=1}^n \pa_{z_j}\Big)^2  G
+\sum_{j=1}^n \pa_{z_j}\frac{2}{z_j }\cdot   G
+\alpha  \sum_{j=1}^n \frac{-2 }{z_j^2}\cdot  G=0.$$
Since the PDE does not depend on the order of points, it is also satisfied by the unordered Green's function $G$.

We expect that the smoothness of $ G$ can be proved by H\"ormander's theorem because the differential operator in the above displayed formula satisfies H\"ormander's condition.
\end{Remark}

The rest part of the paper is devoted to the proof of Theorem \ref{main}. By symmetry it suffices to work on the case $j_0=1$.  We will first prove in Section \ref{proof1} the existence of $G^1$ as well as the uniform convergence on compact subsets of $\Sigma_n$, and then prove in Section \ref{proof2} the continuity of $G^1$.

\subsection{Existence}\label{proof1}
In this subsection, we work on the inductive step to prove the existence of the limit in (\ref{ordered}) with $j_0=1$. We now define $G^1$ on $\Sigma_n$ using (\ref{induction}) instead of (\ref{ordered}).
In order to prove that the limit in (\ref{ordered}) converges uniformly on each compact subset of $\Sigma_n$, it suffices to show that, for any $\ulin z^*=(z_1^*,\dots,z_n^*)\in\Sigma_n$ and $\eps>0$, there exists $\delta>0$ such that if $\ulin z=(z_1,\dots,z_n)\in \Sigma_n$ and $\ulin r=(r_1,\dots,r_n)\in(0,\infty)^n$ satisfy that $\Vert \ulin z-\ulin z^*\Vert_\infty<\delta$ and $\Vert \ulin r\Vert_\infty<\delta$, then
\BGE |\prod_{j=1}^n r_j^{-\alpha} \PP[\tau^{z_1}_{r_1}< \tau^{z_j}_{r_j}<\infty,2\le j\le n]- G^1(z_1,\dots,z_n)|<\eps. \label{local-uniform}\EDE

Fix $\ulin {z}^*=(z_1^*,\dots,z_n^*)\in\Sigma_n$ and $\eps>0$. Recall Definition \ref{Definition-Sigma}. Let $d^*=d(\ulin z^*)$.
Let $\ulin z=(z_1,\dots,z_n)\in\Sigma_n$ satisfy $\Vert \ulin z-\ulin z^*\Vert_\infty< d^*/2$. 
First suppose $z_1^*$ is not an innermost component of $\ulin z^*$. Then $z_1$ is not an innermost component of $\ulin z$. Then there is $k_0\in\{2,\dots,n\}$ such that $z_{k_0}$ lies strictly between $0$ and $z_1$. Under the law $\PP_{z_1}^*$, we have $\tau^*_{z_{k_0}}\le T_{z_1}$, and so $ G_{T_{z_1}}(z_2,\dots,z_n)=0$, which implies that $G^1(\ulin z)=0$.  On the other hand, by Lemma \ref{ping-pong},
$$\lim_{r_1,\dots,r_n\to 0^+} \prod_{j=1}^n r_j^{-\alpha} \PP[\tau^{z_1}_{r_1}<\tau^{z_k}_{r_k}<\infty,2\le k\le n]=0,$$
and the convergence is uniform  in some neighborhood of $\ulin z^*$. So we have (\ref{local-uniform}) if $z_1^*$ is not an innermost component of $\ulin z^*$.
From now on, we assume that $z_1^*$ is an innermost component of $\ulin z^*$. 
By symmetry we assume that $z_1^*>0$.



Let $\ulin z=(z_1,\dots,z_n)\in\Sigma_n$ and  $\ulin r=(r_1,\dots,r_n)\in (0,\infty)^n $. Suppose $\Vert \ulin z-\ulin z^*\Vert_\infty< d^*/4$ and $\Vert \ulin r\Vert_\infty<d^*/4$. Then the discs $\{|z-z_j|\le r_j\}$, $1\le j\le n$, are mutually disjoint.
Let $E_{\ulin r}$ denote the event $\{\tau^{z_1}_{r_1}<\tau^{{z_j}}_{r_{j}}<\infty,2\le j\le n\}$.
We will transform the rescaled probability $\prod_{j=1}^n r_j^{-\alpha} \PP[E_{\ulin r}]$ into  $G^1(\ulin z)$ (defined by (\ref{induction})) in a number of steps. In each step we get an error term, and have an upper bound of the error term. 

We define some good events depending on $\ulin z$.
For any $r>0$ and $\HH$-hull $H$, let $E_{r;H}$ denote the event that $K_{\tau^{z_1}_r}\subset H$.
For $R>r>s\ge 0$, let $E_{r,s;R}$ be the event that $\gamma[\tau^{z_1}_r,\tau^{z_1}_s]$ does not intersect the connected component of $\{z\in\HH:|z-z_1|=R\}\cap H_{\tau^{z_1}_{r}}$ which has $z_1+R$  as an endpoint.

In the following, we use $X\st{e}{\approx}Y$ to denote the approximation relation $|X-Y|=e$, and call $e$ the error term. Let $\ulin z'=(z_2,\dots,z_n)$, $\ulin r'=(r_2,\dots,r_n)$, and $E_{\ulin r'}=\{\tau^{z_j}_{r_j} <\infty,2\le j\le n\}$. 
For some $\HH$-hull $H$ to be determined we use the following approximation relations:
\begin{align*}
 & \PP[E_{\ulin r}]\st{e_1^*}{\approx} \PP[E_{\ulin r}\cap E_{r_1;H}]
  =   \PP[\tau^{z_1}_{r_1}<\infty]\cdot  \EE^{r_1}_{z_1}[{\mathbf 1}_{E_{r_1;H} }\PP[E_{\ulin r'}| \F_{\tau^{z_1}_{r_1}}]]\\
  \st{e_2^*}{\approx} & r_1^{\alpha}  G(z_1) \EE^{r_1}_{z_1}[{\mathbf 1}_{E_{r_1;H}}\PP[E_{\ulin r'}| \F_{\tau^{z_1}_{r_1}}] ]
  \st{e_3^*}{\approx}  G(z_1)  \EE^{r_1}_{z_1}[{\mathbf 1}_{E_{r_1;H}} G_{\tau^{z_1}_{r_1}}(\ulin z')]  \prod_{k=1}^n r_k^{\alpha}.
\end{align*}
We write $G(r,\cdot)$ for $G_{\tau^{z_1}_r}$.   For some $\eta_2>\eta_1>r_1$ to be determined, we further use the following approximation relations:
\begin{align*}
  & G(z_1)\EE_{z_1}^{{r_1}}[{\mathbf 1}_{E_{r_1;H}}  G_{}({r_1},\ulin z')]
   \st{e_4}{\approx}
  G(z_1)\EE_{z_1}^{{r_1}}[{\mathbf 1}_{E_{r_1;H}\cap E_{\eta_1,r_1;\eta_2}}  G_{ }({r_1},\ulin z')]\\
\st{e_5}{\approx} &   G(z_1)\EE_{z_1}^{{r_1}}[{\mathbf 1}_{E_{\eta_1;H}\cap E_{\eta_1,{r_1};\eta_2}}  G_{ }({r_1},\ulin z')] \st{e_6}{\approx}     G(z_1)\EE_{z_1}^{{r_1}}[{\mathbf 1}_{E_{\eta_1;H}\cap E_{\eta_1,{r_1};\eta_2}}  G_{ }({\eta_1},\ulin z')]\\
\st{e_{7}}{\approx}   &  G(z_1)\EE_{z_1}^{{r_1}}[{\mathbf 1}_{E_{\eta_1;H} }  G_{ }({\eta_1},\ulin z')] 
\st{e_{8}}{\approx} G(z_1)\EE^*_{z_1}[ {\mathbf 1}_{E_{\eta_1;H}}  G_{ }({\eta_1},\ulin z')]
\\\st{e_{9}}{\approx} & G(z_1)\EE^*_{z_1}[{\mathbf 1}_{E_{\eta_1;H}\cap E_{\eta_1,0;\eta_2}}   G_{ }({\eta_1},\ulin z')]
\st {e_{10}}{\approx}  G(z_1)\EE^*_{z_1}[{\mathbf 1}_{E_{\eta_1;H}\cap E_{\eta_1,0;\eta_2}}   G_{ }(0 ,\ulin z')] \\ \st {e_{11}}{\approx}  & G(z_1)\EE^*_{z_1}[{\mathbf 1}_{E_{0;H}\cap E_{\eta_1,0;\eta_2}}   G_{ }(0 ,\ulin z')]
 \st {e_{12}}{\approx} G(z_1)\EE^*_{z_1}[{\mathbf 1}_{E_{0;H}}   G_{}(0,\ulin z')] \\ \st {e_{13}}{\approx} &  G(z_1)\EE^*_{z_1}[   G_{ }(0,\ulin z')]  = G(\ulin z).
\end{align*}
Let $e_j=e_j^*/\prod_{k=1}^n r_k$, $j=1,2,3$. Then
\BGE \Big|\prod_{k=1}^n r_k^{-\alpha}  \PP[E_{\ulin r}] -  G(\ulin z)\Big|\le \sum_{j=1}^{12} e_j.\label{sum-ej}\EDE

Let $\tau=\tau^{z_1}_{r_1}$, $D_j=\{z\in H_\tau:|z-z_j|\le r_j\}$, $\til z_j=Z_\tau(z_j)$, $\til D_j=Z_\tau(D_j)$, $\til r_j^+=\rad_{\til z_j}(\til D_j)$ and $\til r_j^-=\dist(\til z_j,\pa \til D_j\cap \HH)$, $2\le j\le n$. Let $\ulin{\til z}'=(\til z_2,\dots,\til z_n)$ and $\ulin {\til r}'_\pm=(\til r_2^\pm,\dots,\til r_n^\pm)$. By Koebe distortion theorem, for any $2\le j\le n$, if $r_j<\dist(z_j,K_\tau)$,
\BGE \frac{|Z_\tau'(z_j)| r_j}{(1+r_j/\dist(z_j,K_\tau))^2}\le \til r_j^-\le \til r_j^+\le \frac{|Z_\tau'(z_j)| r_j}{(1-r_j/\dist(z_j,K_\tau))^2} .\label{distortion}\EDE
By DMP of chordal SLE and (\ref{Gws}),
\BGE G(\ulin{\til z}',\ulin{\til r}'_-)\le  \PP[E_{\ulin r'}|\F_\tau,\tau<\tau^{z_j}_{r_j},2\le j\le n]\le  G(\ulin{\til z}',\ulin{\til r}'_+).\label{DMP}\EDE

Let $C_\kappa\in[1,\infty)$ be the constant in Proposition \ref{RN<1}.
By Lemma \ref{compact-lem}, there are a nonempty $\HH$-hull $H$ and $\delta_H\in (0, d^*/3]$, such that $\{z\in\HH:|z-z_1^*|\le 3\delta_H\}\subset H$, $\dist(z_j,H)\ge 3\delta_H$, $2\le j\le n$,  and whenever $\Vert\ulin z-\ulin z^*\Vert_\infty\le \delta_H$ and $\Vert \ulin r\Vert _\infty\le
\delta_H$, we have
\BGE \prod_{j=1}^n r_j^{-\alpha} \PP[E_{r_1;H}^c\cap  E_{\ulin r}]<\frac\eps{11 C_\kappa } .\label{est-H}\EDE

From now on, we always assume that $\Vert\ulin z-\ulin z^*\Vert_\infty<\delta_H$. Then $H\supset \{z\in\HH:|z-z_1|\le  2\delta_H\}$, and $\dist(z_j,H)\ge 2\delta_H$, $2\le j\le n$. By (\ref{est-H}), if $\Vert \ulin r\Vert_\infty\le \delta_H$,
$$e_1\le \frac\eps{11 C_\kappa }.$$

Sending  $r_2,\dots,r_n$ to $0^+$ in (\ref{est-H}) and using Fatou's lemma, estimates (\ref{distortion},\ref{DMP}) and the convergence of $(n-1)$-point Green's function, we get
$$ r_1^{-\alpha} \PP[\tau^{z_1}_{r_1}<\infty] \cdot \EE_{z_1}^{r_1}[{\mathbf 1}_{E_{r_1;H}^c}  G (r_1,\ulin z')]\le \frac \eps {11 C_\kappa},\quad \mbox{if }r_1\le \delta_H.$$
By Proposition \ref{RN<1}, if $r_1\le \delta_H$,
$$ r_1^{-\alpha} \PP[\tau^{z_1}_{r_1}<\infty] \cdot \EE_{z_1}^{*}[{\mathbf 1}_{  E_{r_1;H}^c}  G (r_1,\ulin z')]\le   \frac \eps {11}.$$
Let $r_1\to 0^+$. From $ r_1^{-\alpha} \PP[\tau^{z_1}_{r_1}<\infty]\to G(z_1)$, $E_{r_1;H}^c\to E_{0;H}^c$,  $G(r_1,\ulin z)\to G(0,\ulin z)$ and Fatou's lemma, we get
$G(z_1) \EE_{z_1}^{*}[{\mathbf 1}_{  E_{0;H}^c}  G (0,\ulin z')]\le  \frac \eps{11}$, which implies
$$e_{13}\le     \frac \eps{11}.$$

By Proposition \ref{compact-prop},  the set
$$ \Omega_H:=\{(g_K(z_2)-u,\dots,g_K(z_n)-u): K\in{\mathcal H}(H),u\in S_H,|z_j-z_j^*|\le \delta_H,2\le j\le n\}$$ 
is a compact subset of $\Sigma_{n-1}$, and the set
$$  Q_H:= \{|g_K'(z)|:K\in{\mathcal H}(H),z\in\bigcup_{j=2}^n [z_j^*-\delta_H,z_j^*+\delta_H]\}$$ 
is a compact subset of $(0,\infty)$. Let $\xi_H=\min\{|w_k|:\ulin w=(w_2,\dots,w_n)\in \Omega_H,2\le k\le n\}>0$.
For $a\ge 0$, we write $\ulin Z_a(\ulin z')$ for $(Z_{\tau^{z_1}_a}(z_2),\dots,Z_{\tau^{z_1}_a}(z_n))$, when all components are well defined. We will use the fact that, on the event $E_{a;H}$, $\ulin Z_a(\ulin z')\in \Omega_H$ because $Z_{\tau^{z_1}_a}=g_{K_{\tau^{z_1}_a}}-U_{\tau^{z_1}_a}$, $K_{\tau^{z_1}_a}\subset H$, and $U_{\tau^{z_1}_a}\in S_{K_{\tau^{z_1}_a}}\subset H$ by Corollary \ref{UinSK} and Proposition \ref{SKSH}. Recall that we assume that  $\Vert\ulin z-\ulin z^*\Vert_\infty\le \delta_H$. So we have
\BGE |Z_{\tau^{z_1}_a}(z_j)|\ge \xi_H,\quad 2\le j\le n,\mbox{ on the event }E_{a;H}.\label{xiH}\EDE
By the continuity of $(n-1)$-point Green's function and the compactness of $\Omega_H$ and $Q_H$, we see that, for any $a\ge 0$, $G(a,\ulin z')$ is bounded by a constant depending only on $\kappa,n,\ulin z^*,H,\delta_H$ on the event $E_{a;H}$.

By (\ref{distortion},\ref{DMP}), the compactness of $\Omega_H$ and $Q_H$, and Proposition \ref{lower-upper},
 $\PP[E_{\ulin r'}|\F_{\tau^{z_1}_{r_1}},E_{r_1;H}]$ is bounded by $\prod_{j=2}^n r_j^\alpha$ times some constant depending only on $\kappa,n,\ulin z^*,H,\delta_H$. 
By (\ref{G(z)-approx}) and the above bound, there are $\beta_1\in(0,\infty)$ depending only on $\kappa$ and  $C^H_1\in(0,\infty)$ depending only on $\kappa,n,\ulin z^*,H,\delta_H$ such that, if $r_1<|z_1|$,
 $$e_2\le C^H_1 r_1^{\beta_1}.$$

 Since the convergence of $(n-1)$-point ordered Green's function is uniform over compact sets, by (\ref{distortion},\ref{DMP}) and the compactness of $\Omega_H$ and $Q_H$, we find that there is $\delta_H'\in(0,\delta_H)$ depending only on  $\kappa,n,H,\delta_H$ such that if $\Vert\ulin r\Vert_\infty\le \delta_H'$, then
 $$e_3<\frac\eps {11}.$$

Since $G$ is continuous on $\Sigma_{n-1}$, by the compactness of $\Omega_H$ and $Q_H$, $G(a,\ulin z')$ is bounded by some constant depending only on $\kappa,n,\ulin z^*, H,\delta_H$ on the event $E_{a;H}$. Combining this fact for $a\in\{r_1,\eta_1,0\}$ with Proposition \ref{stayin} and the boundedness of $G(z_1)$ (over $[z_1^*-\delta_H,z_1^*+\delta_H]$), we find that there is   $C^H_2\in(0,\infty)$ depending only on $\kappa,n,\ulin z^*,H,\delta_H$ such that
$$e_4,e_7,e_9,e_{12}\le C^H_2 (\eta_1/\eta_2)^\alpha.$$

Since $H\supset \{z\in\HH:|z-z_1^*|\ge 3\delta_H\}$, if  $\eta_2\le 2\delta_H$, then $E_{r_1;H}\cap E_{\eta_1,r_1;\eta_2}=E_{\eta_1;H}\cap E_{\eta_1,r_1;\eta_2}$ and $E_{\eta_1;H}\cap E_{\eta_1,0;\eta_2}=E_{0;H}\cap E_{\eta_1,0;\eta_2}$, which implies that
$$e_5=e_{11}=0.$$

Combining Proposition \ref{Prop2.13} with the  boundedness of $G(z_1)$ and $G(\eta_1,\ulin z')$ on the event $E_{\eta_1;H}$, we find that there are $\beta_2>0$ depending only on $\kappa$ and $C^H_3\in(0,\infty)$ depending only on $\kappa,n,\ulin z^*,H,\delta_H$ such that, if $r_1<\eta_1/6$,
$$e_8\le C_3^H (r_1/\eta_1)^{\beta_2}.$$

Recall that
$$G(\eta_1,\ulin z')=\prod_{j=2}^n |Z_{\tau^{z_1}_{\eta_1}}'(z_j)|^\alpha \cdot G(\ulin Z_{ {\eta_1}}(\ulin z')),\quad G(r_1,\ulin z')= \prod_{j=2}^n |Z_{\tau^{z_1}_{r_1}}'(z_j)|^\alpha\cdot G(\ulin Z_{ {r_1}}(\ulin z')).$$
Assume $\eta_2\le 2\delta_h$. Then   $E_{r_1;H}\cap E_{\eta_1,r_1;\eta_2}=E_{\eta_1;H}\cap E_{\eta_1,r_1;\eta_2}$, and on this common event, $\ulin Z_{{\eta_1}}(\ulin z'),\ulin Z_{{r_1}}(\ulin z')\in \Omega_H$.
Let $K_\Delta= K_{\tau^{z_1}_{r_1}}/ K_{\tau^{z_1}_{\eta_1}}$. 
On the event $E_{\eta_1,r_1;\eta_2}$, by Proposition \ref{lem-extremal2'},  $\diam(K_\Delta)\le 8  \eta_2$, and
by Proposition \ref{centered-Delta}, we have
\BGE \Vert \ulin Z_{\eta_1}(\ulin z')-\ulin Z_{r_1}(\ulin z')\Vert_\infty\le 56 \eta_2 .
\label{Z-r-eta}\EDE
From $K_\Delta=K_{\tau^{z_1}_{r_1}}/ K_{\tau^{z_1}_{\eta_1}}$ we know $g_{\tau^{z_1}_{r_1}}=g_{K_\Delta}\circ g_{\tau^{z_1}_{\eta_1}}$. Let $Z_\Delta=g_{K_\Delta}(\cdot+U_{\tau^{z_1}_{\eta_1}})-U_{\tau^{z_1}_{r_1}}$. Then $Z_{\tau^{z_1}_{r_1}}=Z_\Delta\circ Z_{\tau^{z_1}_{\eta_1}}$ and $Z_\Delta'(z)=g_{K_\Delta}'(\cdot+U_{\tau^{z_1}_{\eta_1}})$. By Proposition \ref{K/U}, $U_ {\tau^{z_1}_{\eta_1}}\in \lin{K_\Delta}$. By Proposition \ref{small}, for $z\in\lin\HH$,
\BGE |Z_\Delta'(z)-1|\le 5 \Big(\frac{8\eta_2}{|z |}\Big)^2,\quad \mbox{if }|z |\ge 40\eta_2.\label{Z-r-eta'}\EDE
Let $\til z_j=Z_ {\tau^{z_1}_{\eta_1}}(z_j)$, $2\le j\le n$. Then  $Z_{\tau^{z_1}_{r_1}}'(z_j)=Z_\Delta'(\til z_j)\cdot Z_{\tau^{z_1}_{\eta_1}}'(z_j)$, and by (\ref{xiH}) $|\til z_j|\ge \xi_H$ on the event $E_{\eta_1;H}$. Thus,   if $\eta_2\le \xi_H/40$, then $|Z_\Delta'(z_j)-1|\le 320  {\eta_2^2}/{\xi_H^2}$. Since $G$ is continuous on $\Sigma_{n-1}$, it is uniformly continuous on the compact set $\Omega_H$. By (\ref{Z-r-eta}),   $|G(\ulin Z_{\eta_1}(\ulin z'))-G(\ulin Z_{r_1}(\ulin z')|\to 0$ uniformly as $\eta_2\to 0^+$. Combining these facts with the compactness of $Q_H$ and the expressions of $G(\eta_1,\cdot)$ and $G(r_1,\cdot)$, we find that, there is $\delta_H''\in(0,\delta_H')$ depending only on $\kappa,n,\ulin z^*,H,\delta_H$ such that, if $\eta_2\le\delta_H''$, then
$$e_6,e_{10}< \frac\eps {11}.$$

We now explain how to choose the $H$ and $\eta_1,\eta_2$ in the approximation with errors from $e_1$ to $e_{13}$. First, we  choose the $\HH$-hull $H$ and $\delta_H>0$ such that $e_1,e_{13}\le \frac \eps{11}$ if $\Vert \ulin z-\ulin z^*\Vert_\infty\le \delta_H$ and $\Vert \ulin r\Vert_\infty\le \delta_H$. We have the quantities $C^H_1,C^H_2,C^H_3,\delta_H',\delta_H''\in(0,\infty)$ depending only on $\kappa,n,\ulin z^*,H,\delta_H$. Assume that $\Vert \ulin z-\ulin z^*\Vert_\infty\le \delta_H$. If $\Vert\ulin r\Vert_\infty\le \delta_H'$, then
 $e_3<\frac\eps {11}$. Let $\eta_2= \delta_H''$. Then we have $e_6,e_{10}<\frac\eps{11}$. Since $\delta_H''< \delta_H$, we have $\eta_2< \delta_H$, and so $e_5=e_{11}=0$. Let $\eta_1=(\eps/(11 C^H_2))^{1/\alpha} \eta_2$. Then $e_4,e_7,e_9,e_{12}\le \frac\eps{11}$.  If $r_1<(\eps/(11 C^H_1))^{1/\beta_1}$, then $e_2<\frac\eps{11}$; and if $r_1<(\eps/(11 C^H_3))^{1/\beta_2} \eta_1$, then $e_8<\frac\eps{11}$. In conclusion, if $\Vert \ulin z-\ulin z^*\Vert_\infty\le \delta_H$ and
 $$\Vert\ulin r\Vert_\infty< \delta_H'\wedge \Big(\Big(\frac{\eps}{11 C^H_3}\Big)^{1/\beta_2} \cdot\Big(\frac {\eps}{11 C^H_2}\Big)^{1/\alpha}\cdot \delta_H''\Big)=:\delta,$$
 then $e_5=e_{11}=0$ and all $e_j$'s are bounded by $\eps/11$, which then imply by (\ref{sum-ej}) that (\ref{local-uniform}) holds. Thus, we get the  existence of the limit in (\ref{ordered}) with $j_0=1$ as well as the uniform convergence on compact subsets of $\Sigma_n$.

 \subsection{Continuity}\label{proof2}
In this subsection, we prove the continuity of the function $G^1$ on $\Sigma_n$. We adopt the notation in the previous subsection.  
By the  rescaling property and left-right symmetry of SLE, for any $c\in\R\sem \{0\}$, $\ulin z=(z_1,\dots,z_n)\in \Sigma_n$, and $r>0$,
$$\PP[\tau^{ z_1}_{ r }<\tau^{z_k}_{ r }<\infty,2\le k\le n]=\PP[\tau^{c z_1}_{|c| r }<\tau^{c z_k}_{|c| r }<\infty,2\le k\le n].$$
Multiplying both sides by $r^{-n\alpha}$ and sending $r$ to $0^+$, we get by the existence of the limit in (\ref{ordered}) that
 $G^1(\ulin z)=| c|^{n \alpha} G^1( c\ulin z)$. In particular, we have $G^1(\ulin z)=|z_1|^{-n\alpha} G^1(1,z_2/z_1,\dots,z_n/z_1)$. Thus, it suffices to prove that $G^1(1,\cdot)$ is continuous on $\Sigma^1_{n-1}$, which is the set of $\ulin w\in \Sigma_{n-1}$ such that $(1,\ulin w)\in\Sigma_n$. Define $\ha G$ on $\Sigma^1_{n-1}$ such that $\ha G(\ulin w)=\EE^*_1 [G_{T_1}(\ulin w)]$. Then $G^1(1,\ulin w)=G(1) \ha G(\ulin w)$. So it suffices to prove that $\ha G$ is continuous on $\Sigma^1_{n-1}$. From the previous subsection, $\ha G$ vanishes on the set of $\ulin w$ which has at least one component lying in $(0,1)$. Since such set is open in $\Sigma^1_{n-1}$, it suffices to prove the continuity of $\ha G$ at other points of $\Sigma^1_{n-1}$.

 Fix $\ulin w^*=(w_2^*,\dots,w_n^*)\in \Sigma^1_{n-1}$ such that $w_k^*\not\in(0,1)$ for any $2\le k\le n$. Let $w_0=0$ and $w_1=1$. Let $d^*=\min\{|w_j^*-w_k^*|:0\le j<k\le n\}>0$. Let $\eps>0$. By the argument of an upper bound of $e_{13}$ in the previous subsection, there are $\delta_H\in(0,d^*/3)$ and an $\HH$-hull $H$,   such that $\{z\in\HH:|z-1|\le 3\delta_H\}\subset H$, $\dist(w_j^*,H)\ge 3\delta_H$, $2\le j\le n$, and
  for any $\ulin w=(w_2,\dots,w_n)\in\R^{n-1}$ satisfying $\Vert \ulin w-\ulin w^*\Vert_\infty\le \delta_H$, we have
$ \EE_{1}^{*}[{\mathbf 1}_{  E_{0;H}^c}  G_{T_1} (\ulin w)]<\eps/3$, where $E_{0;H}$ is the event that $\gamma[0,T_1]\subset H$. Suppose $\Vert \ulin w-\ulin w^*\Vert_\infty\le \delta_H$, we use the following approximation relations for such $H$:
$$\ha G(\ulin w)=\EE_{1}^{*}[ G _{T_1}(\ulin w)]\st{e_1}{\approx} \EE_{1}^{*}[{\mathbf 1}_{  E_{H}}  G _{T_1}(\ulin w)]\st{e_2}{\approx} \EE_{1}^{*}[{\mathbf 1}_{  E_{H}}  G_{T_1} (\ulin w^*)]\st{e_3}{\approx} \EE_{1}^{*}[   G_{T_1} (\ulin w^*)] =\ha G(\ulin w^*).$$
We have known that $e_1,e_3<\eps/3$. It remains to bound $e_2$.

We write $\ulin Z(\ulin w)=(Z_{T_1}(w_2),\dots,Z_{T_1}(w_n))$. Then   $G_{T_1}(\ulin w)=\prod_{j=2}^n Z_{T_1}'(w_j)^\alpha G(\ulin Z (\ulin w))$.
As $\ulin w\to \ulin w^*$, we have $G(\ulin Z (\ulin w))\to G(\ulin Z (\ulin w^*))$ by the continuity of $(n-1)$-point Green's function, and $Z_{T_1}'(w_j)\to Z_{T_1}'(w_j^*)$, $2\le j\le n$, which together imply that $G_{T_1}(\ulin w)\to G_{T_1}(\ulin w^*)$. We now show that the convergence is uniform (independent of the randomness) on the event $E_{0;H}$. By the previous subsection, on the event $E_{0;H}$, we have  $ \ulin Z(\ulin w),\ulin Z(\ulin w^*)\in \Omega_H$, and $Z_{T_1}'(w_j),Z_{T_1}'(w_j^*)\in Q_H$, $2\le j\le n$. By the compactness of $Q_H$, on the event $E_{0;H}$, the random map $Z_{T_1}$ is equicontinuous (independent of the randomness) on $[w_j-\delta_H,w_j+\delta_H]$ for $2\le j\le n$. Thus, as $\ulin w\to \ulin w^*$, $\ulin Z(\ulin w)\to \ulin Z(\ulin w^*)$  uniformly on the event $E_{0;H}$. Since $G$ is uniformly continuous on the compact set $\Omega_H$, we get $G(\ulin Z(\ulin w))\to G (\ulin Z(\ulin w^*))$ uniformly on the event $E_{0;H}$ as $\ulin w\to \ulin w^*$. By Koebe's distortion theorem, for $2\le j\le n$, $Z_{T_1}'(w_j)\to Z_{T_1}'(w_j^*)$ uniformly on the event $E_{0;H}$ as $\ulin w\to \ulin w^*$. Thus, $G_{T_1}(\ulin w)\to G_{T_1}(\ulin w^*)$ uniformly on the event $E_{0;H}$ as $\ulin w\to \ulin w^*$. In particular, there is $\delta_H'\in(0,\delta_H)$ such that if $\Vert \ulin w-\ulin w^*\Vert_\infty\le \delta_H'$, then $|G_{T_1}(\ulin w)-G_{T_1}(\ulin w^*)|<\eps/3$ on the event $E_{0;H}$, which implies that $e_2<\eps/3$. Thus, if $\Vert \ulin w-\ulin w^*\Vert_\infty\le \delta_H'$, then
$$|\ha G(\ulin w)-\ha G(\ulin w^*)|\le e_1+e_2+e_3<\frac \eps 3+\frac \eps 3+\frac \eps 3=\eps.$$
So we get the desired continuity of $\ha G$ at $\ulin w^*$. The proof of the continuity of $G^1$ on $\Sigma^n$ is thus complete, and so is the proof of Theorem \ref{main}.



\end{document}